\def\.{\partial_t}
\def\:{\partial_{tt}}
\def\al{\alpha}
\def\e{\varepsilon}
\def\eps{\varepsilon}
\def\f{\varphi}
\def\l{\lambda}
\def\la{\lambda}
\def\s{\sigma}
\def\t{\tau}
\def\La{\Lambda}
\def\G{\Gamma}
\def\ga{\gamma}
\def\vka{\varkappa}
\def\vr{\varrho}
\def\fA{\mathfrak{A}}
\newcommand{\fB}{{\mathfrak{B}}}
\def\cA{\mathcal{A}}
\def\cB{\mathcal{B}}
\def\cD{\mathcal{D}}
\def\sD{\mathcal{D}}
\def\cK{\mathcal{K}}
\def\cL{\mathcal{L}}
\def\cM{\mathcal{M}}
\def\cE{\mathcal{E}}
\def\cR{\mathcal{R}}
\def\H20{\mathcal{H}_{2,0}}
\def\R{\mathbb{R}}
\def\cN{\mathcal{N}}
\def\cH{\mathcal{H}}
\def\<{\langle}
\def\>{\rangle}
\def\8{\infty}
\def\hf{\frac{1}{2}}
\def\wrt{with respect to }
\newtheorem{lemma}{Lemma}[section]
\newtheorem{theorem}[lemma]{Theorem}
\newtheorem{remark}[lemma]{Remark}
\newtheorem{proposition}[lemma]{Proposition}
\newtheorem{definition}[lemma]{Definition}
\newtheorem{assumption}[lemma]{Assumption}
\newcommand{\dist}{{\operatorname{dist}}}
\newenvironment{declaration}[1]{\trivlist
\item[\hskip \labelsep{\bf #1 }]\ignorespaces}{\endtrivlist}
\newenvironment{proofof}[1]{\begin{declaration}{#1}}{\hfill
$\square$\end{declaration}}
\newenvironment{proof}{\begin{proofof}{Proof.}}{\end{proofof}}
\begin{document}

\title{Synchronization in coupled second order in time infinite-dimensional models }
\author{Igor
Chueshov\thanks{e-mail:
chueshov@karazin.ua}  
\\ \\
Department of Mechanics and Mathematics, \\ Karazin Kharkov
National University, \\ Kharkov, 61022,  Ukraine
 }
 \maketitle

\begin{abstract}
We study  asymptotic synchronization  at the level of global attractors in a class of coupled  second order in time models which arises in dissipative wave and elastic structure dynamics.
Under some conditions we prove that this synchronization arises in the infinite coupling intensity limit and show that for identical subsystems this phenomenon appears for finite intensities. 
 Our argument involves a 
   method based on ``compensated'' compactness and
quasi-stability estimates. 
As an application we consider the
nonlinear Kirchhoff, Karman and Berger plate models with  different
types of boundary conditions. Our results
can be also applied to the nonlinear wave equations in an arbitrary
dimension.
We  consider  synchronization in  sine-Gordon  type models
 which describes distributed Josephson junctions. 

\smallskip\par\noindent
{\bf Keywords:} 
synchronization; wave dynamics; global attractor;  upper
semicontinuity.
\smallskip\par\noindent
{\bf MSC 2010:} 37L30, 34D06, 35B41.
\end{abstract}

\section{Introduction}

Our goal in this paper is  to study long-time dynamics of a class of coupled systems consisting of two second order in time evolution equations. These systems are abstract models for studies of elastic and wave dissipative dynamics  in different situations. Under some set of hypotheses concerning the model we
 first prove the existence of finite dimensional global attractors
and study their dependence on interaction operators.
 Then we apply  these results to analyze synchronization phenomena.
 In this paper we understand these phenomena at the level of global attractors. This means that in  synchronized regime the attractor of coupled system  
 becomes ``diagonal'' in some sense.
 \par 
 Our main results are presented in Theorems \ref{th:att-sync} and \ref{th:sync-ident}. In particular, Theorem \ref{th:att-sync} proves asymptotic synchronization in the limit of large coupling and Theorem \ref{th:sync-ident}  dealing with interaction of identical 
 systems shows that synchronization is possible for finite values of coupling intensity parameter. 
 As a preliminary step we obtain a result on uniform dissipativity \wrt coupling intensity parameters (see Theorem \ref{th:-u-dis}). 
 We also discuss possibility of synchronization in infinite-dimensional systems by means of finite-dimensional interaction operators.
 As application of these results we consider  a range of nonlinear elastic plate models and also wave dynamics of different types.

 We note that
recently the subject of synchronization of coupled (identical
or not) systems has received considerable attention. There are now
quite a few monographs \cite{balanov,leonov2,majstrenko,osipov,Strog,wu} in this field,
which contain extensive lists of references. In the case of infinite
dimensional systems the synchronization problem has been studied in
\cite{CCK07,Car-Prim-sync2000,CRD,Hale-jdde97,H}  for  coupled  parabolic
systems. Synchronization in Berger plates
(they are a particular case of our abstract models)  was considered in \cite{naboka07,naboka08,naboka09}. 
Master-slave synchronization of coupled parabolic-hyperbolic PDE systems was considered in \cite{Chu04,Chu07}.
 The methods involved in these publications 
 relies either on the parabolic regularization (see \cite{CCK07,CRD,Hale-jdde97}) or on a special structure of the model (see \cite{naboka07,naboka08,naboka09}).
 Our approach is different.
 \\par 
 As an important technical tool 
we involve  the   method developed in \cite{cl-mem} (see also
also \cite[Chapter 8]{cl-book} and \cite{Chu-dqsds15}) based on an observability-type
estimate  which allows us  to establish uniform quasi-stability estimates 
in the case of critical nonlinearities (such as in the von Karman
and Berger models).
In the standard way (see, e.g., \cite{Chu-dqsds15}, \cite{cl-mem} or
\cite[Chapter 8]{cl-book}) these quasi-stability estimates lead
 to appropriate uniform bounds for  attractors which are important for asymptotic synchronization.
\par
The paper is organized as follows.
The next Section~\ref{sect2} is devoted to
 preliminary  considerations.  
 We describe here our abstract model, formulate main hypotheses and provide
  well-posedness result for rather general 
  situation.
Section~\ref{sect4} contains our main results on attractors and synchronization.
In  Section~\ref{Sect-appl} we discuss possible applications.
The Appendix  contains 
 some general facts
from the dissipative dynamical systems theory.

\section{Preliminaries}
\label{sect2}
In this section we describe the problem and state our basic notations and hypotheses.
Then we provide a well-posedness theorem which is easily derived from the known results.

\subsection{Abstract model and main hypotheses} In  a  Hilbert space
$H$  we deal with following coupled equations
\begin{subequations}\label{wave-eq1}
 \begin{equation}\label{wave-eq1a}
    u_{tt} +\nu_1 A u+ D_{11}u_t +D_{12}v_t
    + K_{11}u+ K_{12}v +B_1(u)=0,
 \end{equation}
 \begin{equation}\label{wave-eq1b}
    v_{tt} +\nu_2 A v+ D_{21}u_t +D_{22}v_t
    + K_{21}u+ K_{22}v +B_2(v)=0,
 \end{equation}
 equipped with initial data
 \begin{equation}
 u(0)=u_0,~~u_t(0)=u_1,~~  v(0)=v_0,~~v_t(0)=v_1,
\end{equation}
 \end{subequations}
 under the following set of hypotheses.
\begin{assumption}\label{A1-sync2}
{\rm
\begin{enumerate}
  \item[(i)] $A$ is a self-adjoint positive operator densely
defined on a domain $\cD(A)$ in a separable Hil\-bert space $H$, $\nu_1,\nu_2>0$ are parameters. We  assume that
the resolvent of $A$ is compact in $H$. This implies that there is  orthonormal basis $\{e_k\}$ in $H$ consisting of the eigenvectors
of the operator $A$ :
\[
A e_k=\lambda_k e_k,
\quad 0<\lambda_1\le \lambda_2\le\cdots, \quad \lim_{k\to\infty}\lambda_k =\infty.
\]
We denote by $\|\cdot \|$
and $( \cdot, \cdot )$ the norm and
the scalar product in $H$.  We  also denote
by $H^s$ (with $s>0$) the domain $\sD(A^{s})$ equipped with the graph norm
$\|\cdot \|_s=\|A^{s}\cdot\|$.  $H^{-s}$ denotes the completion
of $H$ with respect to the norm $\|\cdot \|_{-s}=\|A^{-s}\cdot\|$. The symbol $(\cdot,\cdot)$  denotes not only
the scalar product but also  the duality between $H^{s}$ and $H^{-s}$.  
Below we also use the notation $\bar{H}^s=H^s\times H^s$.
  \item[(ii)] 
  The damping operators $D_{ij}\, :\, H^{1/2}\mapsto H^{-1/2}$ are linear mappings such that the matrix operator
  \[
  \sD =\left(\begin{matrix}
  D_{11}& D_{12} \\
 D_{21}  & D_{21}
\end{matrix}\right)\; : ~ H^{1/2}\times H^{1/2}\mapsto H^{-1/2}\times H^{-1/2}
\]
generates a symmetric  
nonnegative bilinear form on 
$\bar{H}^{1/2}\equiv H^{1/2}\times H^{1/2}$.  
  \item[(iii)] The interaction operators 
   $K_{ij}\,:\, H^{1/2}\mapsto  H$ 
 are  linear and  
  \[
  \cK =\left(\begin{matrix}
  K_{11}& K_{12} \\
 K_{21}  & K_{21}
\end{matrix}\right)\; : ~ H^{1/2}\times H^{1/2}\mapsto H\times H
\]
generates a symmetric 
nonnegative
  bilinear form on 
$\bar{H}^{1/2}$.  
  \item[(iv)]
The nonlinear operators $B_i: H^{1/2} \to H $ are
locally Lipschitz, i.e., 
for every $\varrho>0$
there exists a constant $L(\vr)$ 
such that
\begin{equation*}
\| B_i(u)-B_i(v)\|\le L(\vr) \|u-v\|_{1/2},~~i=1,2, 
\end{equation*}
for all $u,v\in H^{1/2}$ such that $  \Vert u\Vert_{1/2}, \Vert v\Vert_{1/2} \le\vr$.
In addition we assume that $ B_i(u)$ are potential operators.
This means that 
$ B_i(u)=\Pi_i'(u)$,
 where  $\Pi_i\,:\, H^{1/2}\mapsto  \R $
is a Frech\'{e}t differentiable  functional on $H^{1/2}$, i.e., 
 \begin{equation*}
 \lim_{\|v\|_{1/2}\to0}\frac1{\|v\|_{1/2}}\Big[\Pi_i(u+v)-\Pi_i(u)-(\Pi_i'(u),v)\Big]=0.
 \end{equation*}
We also assume that 
 $\Pi_i(u)=\Pi_{0i}(u)+\Pi_{1i}(u)$,
 where $\Pi_{0i}(u)$
 is a nonnegative   locally bounded functional on $H^{1/2}$ and 
\begin{equation*}
\forall\, \eta>0\; \exists\, C_\eta :~~
|\Pi_{1i}(u)|\le \eta \big[\|A^{1/2}u\|^2 +\Pi_{0i}(u)\big]+C_\eta,~~ u\in H^{1/2}.
\end{equation*}
\end{enumerate}
}
\end{assumption}
\par
The problem in \eqref{wave-eq1} can be written as
\begin{equation}\label{ver-problem}
   U_{tt} +\cA U+{\cal D} U_t+ \cK U +{\cal B}(U)=0,~~ U(0)=U_0,~~U_t(0)=U_1,
\end{equation}
where the operators $\cD$ and $\cK$ are defined above and
\[
U=\left(\begin{matrix}
  u \\
  v
\end{matrix}\right),~~~
\cA = \left(\begin{matrix}
  \nu_1 & 0 \\
  0 &\nu_2
\end{matrix}\right) A,~~~
\cB(U)=\left(\begin{matrix}
  B_1(u) \\
  B_2(v)
\end{matrix}\right).
\]
As it was already mentioned in Introduction our main motivation for \eqref{wave-eq1} (and \eqref{ver-problem}) 
and also for the hypotheses in Assumption \ref{A1-sync2} is
 related with applications to coupled plate and wave systems (see Section \ref{Sect-appl}).

\subsection{Well-posedness}\label{sect3}

Now we consider the existence and uniqueness
theorem for \eqref{ver-problem}. We start with adaptation of the standard  definition (see, e.g, \cite{cl-mem} and the references therein) to our model.

\begin{definition}\label{str-sol-2ord}
{\rm 
A function $U(t)\in C([0,T]; \bar{H}^{1/2})\cap C^1([0,T]; \bar{H})$
possessing the properties $U(0)=U_0$ and $U_t(0)=U_1$
is said to be
\begin{enumerate}
  \item[{\bf (S)}] {\em strong solution} to
problem (\ref{ver-problem})  on the interval $[0,T]$, iff
\begin{itemize}
  \item $u\in W_1^1(a,b; \bar{H}^{1/2})$ and
$u_t\in W_1^1(a,b; \bar{H})$    
for any $0<a<b<T$,
where
\begin{equation*}
W_1^1(a,b;H)=\left\{ f\in C(a,b;H)\; :\; f^\prime\in
L_1(a,b;H)\right\},
\end{equation*}
\item
$ \cA U(t) +   D U_t(t) \in  \bar{H}$ for almost all $t\in [0,T]$;
  \item equation in (\ref{ver-problem}) is satisfied in $\bar H$
  for almost all $t\in [0,T]$;
\end{itemize}
  \item[{\bf (G)}]
{\em generalized solution} to problem~(\ref{ver-problem})
on the interval $[0,T]$, iff
 there exists a sequence
  of strong solutions $\{ U_n(t)\}$ 
 with initial data
$(U_{0n}, U_{1n})$ instead of $(U_{0}, U_{1})$
such that
\begin{equation*}
\lim_{n\to\infty}\max_{t\in[0,T]}\left\{ \|\partial_t U(t)-\partial_tU_n(t)\|+
 \|\cA^{1/2}( U(t)-U_n(t))\|\right\}= 0.
\end{equation*} 
\end{enumerate}
}
\end{definition}
Application of Theorem~1.5 from \cite{cl-mem}
gives the following   well-posedness
result.
\begin{theorem}\label{t2.wp-smp}
Let $T > 0 $ be arbitrary.
Under Assumption~\ref{A1-sync2} the following statements hold.
\begin{itemize}
\item{\bf Strong solutions:}
For every
$(U_0; U_1) \in  \bar{H}^{1/2}\times\bar{H}^{1/2}$,
such that $\cA U_0+  \cD U_1\in \bar H$
there exists   unique strong solution to problem (\ref{ver-problem})
 on the interval
$[0,T]$ such that
\begin{equation*}\label{8.1.3a}
\begin{array}{c}
(U_t; U_{tt}) \in L_\infty( 0, T; \bar{H}^{1/2} \times \bar H),
\quad U_{t} \in C_r([0,T);   \bar{H}^{1/2}),
\\ \\
U_{tt} \in C_r([0,T); \bar H )\quad\mbox{and}\quad
\cA U(t)+ \cD U_t(t)\in C_r([0, T); \bar{H}),
\end{array}
\end{equation*}
where we denote by $C_r$  the space of right continuous
functions. This solution satisfies the energy relation
\begin{equation}\label{8.1.4}
\cE(U(t), U_t(t))+\int_0^t (\cD U_t(\tau), U_t(\tau)) d\tau
= 
\cE(U_0, U_1),
\end{equation}
where the energy $\cE$ is defined by the relation
\[
\cE (U_0,U_1)=\cE_1 (u_0,u_1)+\cE_2 (v_0,v_1)
+\cE_{int}(u_0,v_0),
\]
with $U_0=(u_0;v_0)$, $U_1=(u_1;v_1)$,
\begin{equation*}
\cE_i(u_0, u_1)=E_i(u_0, u_1)+\Pi_i(u_0)
\equiv\frac12\left( \|u_1\|^2 + \nu_i\|A^{1/2} u_0\|^2\right)+\Pi_i(u_0).
\end{equation*}
and 
\begin{equation*}
\cE_{int}(u_0, v_0)=\hf (\cK U_0, U_0).
\end{equation*}
\item{\bf Generalized solutions:}
For every  $(U_0;U_1) \in \bar{H}^{1/2}\times \bar H$
there exists unique generalized solution.
This solution possesses the property 
$\cD^{1/2} u_t\in L_2(0,T; \bar{H})$ and
satisfies  the energy  inequality 
\begin{equation}\label{8.1.4new}
\cE(U(t), U_t(t))+\int_0^t (\cD U_t(\tau), U_t(\tau)) d\tau
\le  
\cE(U_0, U_1).
\end{equation}
\end{itemize}
If $U^1$ and $U^2$ are 
 generalized solutions 
 with different initial data and $Z=U^1-U^2$,
 then
\begin{equation*}
\|Z_t(t)\|^2+\|\cA^{1/2} Z(t)\|^2
+\|\cK^{1/2} Z(t)\|^2
\le\left(\|Z_t(0)\|^2+\|\cA^{1/2} Z(0)\|^2 +\|\cK^{1/2} Z(t)\|^2\right) e^{a_R t}
\end{equation*}
 provided 
 $
\|U^i_t(0)\|^2+\|\cA^{1/2} U^i(0)\|^2
+\|\cK^{1/2} U^i(0)\|^2\le R^2$.
\end{theorem}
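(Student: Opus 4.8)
\noindent The plan is to derive a differential inequality for the ``energy'' of the difference $Z=U^1-U^2$ and close it with Gronwall's lemma, the only substantial ingredient being a time-uniform bound on the $\bar H^{1/2}$-norm of $U^1$ and $U^2$. Subtracting \eqref{ver-problem} written for $U^1$ and for $U^2$ gives
\begin{equation*}
 Z_{tt}+\cA Z+\cD Z_t+\cK Z+\bigl(\cB(U^1)-\cB(U^2)\bigr)=0 .
\end{equation*}
Assume first that $U^1,U^2$ are \emph{strong} solutions. I would pair this identity (which holds in $\bar H$ for a.e.\ $t$) with $Z_t$; since by Assumption~\ref{A1-sync2}(ii),(iii) the operators $\cD,\cK$ generate symmetric nonnegative forms, the terms $(\cA Z,Z_t)$ and $(\cK Z,Z_t)$ become $\tfrac12\tfrac{d}{dt}\|\cA^{1/2}Z\|^2$ and $\tfrac12\tfrac{d}{dt}\|\cK^{1/2}Z\|^2$, while $(\cD Z_t,Z_t)\ge0$ is simply dropped. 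This yields
\begin{equation*}
 \tfrac{d}{dt}\Phi(t)\le \bigl|(\cB(U^1)-\cB(U^2),Z_t)\bigr|,
 \qquad
 \Phi(t):=\tfrac12\|Z_t\|^2+\tfrac12\|\cA^{1/2}Z\|^2+\tfrac12\|\cK^{1/2}Z\|^2 .
\end{equation*}

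The key step is to estimate the right-hand side. From the energy inequality \eqref{8.1.4new} together with $\cD\ge0$ one gets $\cE(U^i(t),U^i_t(t))\le\cE(U^i_0,U^i_1)$ for all $t$. Using the splitting $\Pi_i=\Pi_{0i}+\Pi_{1i}$ with $\Pi_{0i}\ge0$ and the bound on $\Pi_{1i}$ in Assumption~\ref{A1-sync2}(iv) with $\eta$ chosen small (depending only on $\nu_1,\nu_2$), one obtains on one side $\cE(W_0,W_1)\ge c_0\bigl(\|W_1\|^2+\|\cA^{1/2}W_0\|^2\bigr)-C_0$, and on the other, since $\Pi_{0i}$ is locally bounded, $\cE(W_0,W_1)\le c(R)$ whenever $\|W_1\|^2+\|\cA^{1/2}W_0\|^2+\|\cK^{1/2}W_0\|^2\le R^2$. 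Hence, under the hypothesis on the initial data, $\|U^i_t(t)\|^2+\|\cA^{1/2}U^i(t)\|^2\le\varrho_R^2$ for all $t\ge0$, so the two components of $U^i(t)$ stay in a ball of $H^{1/2}$ of radius $c\,\varrho_R$. The local Lipschitz property of $B_1,B_2$ then gives $\|\cB(U^1)-\cB(U^2)\|\le L(c\,\varrho_R)(\min\{\nu_1,\nu_2\})^{-1/2}\|\cA^{1/2}Z\|$, whence the right-hand side of the above inequality is bounded by $\tfrac{a_R}{2}\bigl(\|\cA^{1/2}Z\|^2+\|Z_t\|^2\bigr)\le a_R\Phi(t)$ for a suitable $a_R=a_R(R)$. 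Gronwall's inequality then gives $\Phi(t)\le e^{a_Rt}\Phi(0)$, i.e.\ the asserted estimate for strong solutions (the $Z(t)$ in the last term on the right of the stated inequality being an evident misprint for $Z(0)$).

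It remains to pass to generalized solutions. Given generalized $U^1,U^2$, I would approximate each of them by a sequence of strong solutions $U^i_n$ in the sense of Definition~\ref{str-sol-2ord}(G); their initial data converge in $\bar H^{1/2}\times\bar H$, hence eventually satisfy the hypothesis with $R$ replaced by, say, $2R$, so the previous step applies to $Z_n=U^1_n-U^2_n$ with a common constant $a_{2R}$. Letting $n\to\infty$ and using that the convergence in Definition~\ref{str-sol-2ord}(G) controls $\|\partial_t(\cdot)\|$ and $\|\cA^{1/2}(\cdot)\|$ uniformly on $[0,T]$ (and $\|\cK^{1/2}W\|\le C\|\cA^{1/2}W\|$ since $\cK:\bar H^{1/2}\to\bar H$) yields the estimate for $Z$ itself.

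I expect the genuine obstacle to be the extraction of the time-uniform $\bar H^{1/2}$-bound on $U^1,U^2$ from the energy inequality: this is precisely where the structure of the potentials in Assumption~\ref{A1-sync2}(iv) — nonnegativity of $\Pi_{0i}$ and smallness of $\Pi_{1i}$ relative to $\|A^{1/2}\cdot\|^2+\Pi_{0i}$ — must be used, both to coerce $\cE$ from below and to bound it from above on bounded sets. The pairing with $Z_t$ and the Gronwall step are routine.
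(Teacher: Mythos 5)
Your Gronwall argument for the final continuous-dependence estimate is essentially correct and is the standard one: subtract the two copies of \eqref{ver-problem}, pair with $Z_t$, use the symmetry and nonnegativity of the forms generated by $\cD$ and $\cK$ to turn $(\cA Z,Z_t)$ and $(\cK Z,Z_t)$ into exact time derivatives and to drop $(\cD Z_t,Z_t)\ge 0$, and control the nonlinear term by the local Lipschitz property of $B_i$ once a time-uniform $\bar H^{1/2}$ bound on $U^1,U^2$ is available. You correctly locate where Assumption~\ref{A1-sync2}(iv) is needed (coercivity of $\cE$ from below via the smallness of $\Pi_{1i}$ relative to $\|A^{1/2}\cdot\|^2+\Pi_{0i}$, boundedness from above via local boundedness of $\Pi_{0i}$), and the observation that the $Z(t)$ on the right-hand side of the stated inequality is a misprint for $Z(0)$ is accurate. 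The passage from strong to generalized solutions by density in the topology of Definition~\ref{str-sol-2ord}(G) is also the intended mechanism.

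However, what you have written is a proof of only the \emph{last} assertion of the theorem. The bulk of the statement --- existence and uniqueness of strong solutions with the regularity $(U_t;U_{tt})\in L_\infty(0,T;\bar H^{1/2}\times\bar H)$, $U_{tt}\in C_r([0,T);\bar H)$, $\cA U+\cD U_t\in C_r([0,T);\bar H)$, the energy \emph{identity} \eqref{8.1.4}, and the existence of generalized solutions with $\cD^{1/2}U_t\in L_2(0,T;\bar H)$ --- is left entirely unaddressed, and no construction is proposed (for this class of models one needs either nonlinear semigroup generation for the maximal monotone part built from $\cA$ and $\cD$, perturbed by the locally Lipschitz term $\cB$, or a Galerkin scheme with the attendant compactness and limit arguments). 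The paper does not reprove these facts either: its entire proof is an application of Theorem~1.5 of \cite{cl-mem}, which delivers all of the listed properties, including your Lipschitz stability estimate, in one stroke after checking that the coupled system fits the abstract hypotheses there. Note also that your a~priori bound is extracted from the energy relation, which is itself one of the claims being proved; the argument is not circular only if one first establishes \eqref{8.1.4} on strong solutions (pairing the equation with $U_t$ and using the chain rule for the Fr\'echet-differentiable potentials $\Pi_i$) and then passes to the limit --- your ordering is consistent with this, but that step should be made explicit rather than invoking \eqref{8.1.4new} for strong solutions.
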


By Theorem \ref{t2.wp-smp} problem \eqref{ver-problem} generates a dynamical system $(\cH, S_t)$ in the space
\[
\cH=\bar{H}^{1/2}\times \bar{H}\equiv  H^{1/2}\times H^{1/2}\times H\times H
\]
with  evolution operator defined by the relation
\[
S_t(U_0;U_1)=(U(t);U_t(t)),
\]
where $U(t)$ is a generalized
solution to problem \eqref{ver-problem}.

\section{Global Attractors}\label{sect4}
In this section we prove the existence of a global attractor
for the dynamical system $(\cH, S_t)$
and study its properties.
Keeping in  mind further application we 
impose additional hypotheses concerning
the damping operator $\cD$ and the source term 
$\cB$.
\begin{assumption}\label{as:a-comp}
{\rm 
Let Assumption~\ref{A1-sync2} be in force and 
 \begin{enumerate}
  \item[(i)] $\cD$ is strictly positive, i.e. exists $c_0>0$ such that
 \begin{equation*}\label{d-positive}
  (\cD W,W)\ge c_0\|W\|^2,~~~W\in \bar H^{1/2};
\end{equation*}
 \item[(ii)] \textbf{ either} $B_i$ are subcritical,
  i.e.,
for every $\varrho>0$
there exists a constant $L(\vr)$ 
such that
\begin{equation*}
\| B_i(u)-B_i(v)\|\le L(\vr) \|u-v\|_{1/2-\delta},~~i=1,2, ~~\delta>0,
\end{equation*}
for all $u,v\in H^{1/2}$ such that $  \Vert u\Vert_{1/2}, \Vert v\Vert_{1/2} \le\vr$,
or \textbf{else}  the potential energies $\Pi_i$ are continuous 
  on $H^{1/2-\delta}$ for some $\delta>0$ and
 the mapping $u\mapsto A^{-l}B_i(u)$ is continuous from $H^{1/2-\delta}$ 
 into $H^{-l}$ for some $\delta,l>0$, $i=1,2$.
  \end{enumerate}     	
}
\end{assumption}
\begin{proposition}\label{th:-a-comp}
Let Assumption \ref{as:a-comp} be in force. Then the system  
$(\cH,S_t)$ generated by problem \eqref{ver-problem} is asymptotically smooth
(see the definition in the Appendix). 	
\end{proposition}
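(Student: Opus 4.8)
The plan is to verify the ``compensated compactness'' criterion for asymptotic smoothness developed in \cite{cl-mem} (see also \cite[Chapter~8]{cl-book}, \cite{Chu-dqsds15} and the Appendix): it suffices to produce, for every bounded positively invariant set $B\subset\cH$, an estimate
$$\|S_T y_1-S_T y_2\|_\cH\le \eta(T)\,\|y_1-y_2\|_\cH+\Psi_{T,B}(y_1,y_2),\qquad y_1,y_2\in B,$$
with $\eta(T)\to0$ as $T\to\infty$ and with $\Psi_{T,B}$ a function on $B\times B$ such that $\liminf_m\liminf_n\Psi_{T,B}(y_n,y_m)=0$ for every sequence $\{y_n\}\subset B$. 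Fix such a $B$, take two generalized solutions $U^1,U^2$ with initial data $y_1,y_2\in B$, and set $Z=U^1-U^2=(z;\zeta)$. By the energy inequality \eqref{8.1.4new} together with Assumption~\ref{A1-sync2}(iv), both trajectories stay in a ball of radius $R=R(B)$ in $\cH$ for all $t\ge0$; moreover $Z$ solves, in the generalized sense, the linear-in-$Z$ equation
$$Z_{tt}+\cA Z+\cD Z_t+\cK Z+\cB(U^1)-\cB(U^2)=0$$
with initial data $y_1-y_2$. All the multiplier computations below are first carried out for strong solutions and then extended using density of strong solutions.

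First I would run the standard multiplier scheme. Multiplying the $Z$-equation by $Z_t$ gives the energy relation
$$\cE_Z(t)+\int_s^t(\cD Z_t,Z_t)\,d\tau=\cE_Z(s)-\int_s^t(\cB(U^1)-\cB(U^2),Z_t)\,d\tau,$$
where $\cE_Z(t)=\tfrac12\big(\|Z_t\|^2+(\cA Z,Z)+(\cK Z,Z)\big)$ is equivalent to $\|(Z;Z_t)\|_\cH^2$ on $B$ (using that $\cK$ is bounded $\bar H^{1/2}\to\bar H$ and the forms of $\cD,\cK$ are nonnegative). Multiplying instead by $Z$ yields the equipartition identity controlling $\int_0^T(\cA Z,Z)\,d\tau$ by $\cE_Z(0)+\cE_Z(T)$, by $\int_0^T\|Z_t\|^2\,d\tau$, and by $\int_0^T|(\cB(U^1)-\cB(U^2),Z)|\,d\tau$ (the nonnegativity of the forms generated by $\cD$ and $\cK$ again keeps the remaining cross terms harmless). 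Combining the two identities and invoking the strict positivity $(\cD W,W)\ge c_0\|W\|^2$ from Assumption~\ref{as:a-comp}(i) to replace $\int_0^T\|Z_t\|^2\,d\tau$ by $\int_0^T(\cD Z_t,Z_t)\,d\tau$, one arrives at an observability-type inequality of the form
$$\cE_Z(T)\le \frac{C_R}{T}\,\cE_Z(0)+\frac{C_R}{T}\Big[\int_0^T\big|(\cB(U^1)-\cB(U^2),Z_t)\big|\,d\tau+\Phi_{R}(Z)\Big],$$
where $\Phi_R(Z)$ collects only lower-order quantities such as $\sup_{[0,T]}\|Z(\tau)\|^2$ and $\int_0^T\|Z(\tau)\|^2\,d\tau$. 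For critical source terms the plain equipartition identity is replaced at this step by the observability estimate of \cite{cl-mem}, but the structure of the resulting bound is the same.

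It then remains to check that the nonlinear term together with $\Phi_R(Z)$ defines a contractive function. Under the subcritical alternative in Assumption~\ref{as:a-comp}(ii), $\|\cB(U^1)-\cB(U^2)\|\le L(R)\|Z\|_{1/2-\delta}$, so the nonlinear term is bounded by $C_R\big(\sup_{[0,T]}\|Z_t\|\big)\big(\int_0^T\|Z\|_{1/2-\delta}^2\,d\tau\big)^{1/2}$ and, together with $\Phi_R(Z)$, by a constant times the seminorm $\sup_{[0,T]}\|Z(\tau)\|_{1/2-\delta}^2$. Under the other alternative, one integrates by parts in time using the potential structure $B_i=\Pi_i'$ and invokes the continuity of $u\mapsto A^{-l}B_i(u)$ from $H^{1/2-\delta}$ into $H^{-l}$ and of $\Pi_i$ on $H^{1/2-\delta}$, exactly as in \cite{cl-mem},\cite[Chapter~8]{cl-book}, reducing the term again to quantities governed by the weaker norms $\|\cdot\|_{1/2-\delta}$ and $\|\cdot\|_{-l}$. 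Since the resolvent of $A$ is compact the embeddings $H^{1/2}\hookrightarrow H^{1/2-\delta}\hookrightarrow H^{-l}$ are compact, and since $U^1,U^2$ stay bounded in $C(0,T;\bar H^{1/2})\cap C^1(0,T;\bar H)$, the Aubin--Lions lemma shows that $Z$ ranges in a precompact subset of $C(0,T;\bar H^{1/2-\delta})$; hence the quantity just estimated, regarded as a function of $(y_1,y_2)$, has the required $\liminf_m\liminf_n(\cdot)=0$ property. Choosing $T=T(R)$ so large that $C_R/T$ is small, using the equivalence of $\cE_Z$ with $\|(Z;Z_t)\|_\cH^2$ and taking square roots, one obtains the desired estimate with $\eta(T)=\sqrt{C_R/T}\to0$ and a contractive $\Psi_{T,B}$, which yields asymptotic smoothness.

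The step I expect to be the main obstacle is the treatment of the nonlinear term $\int_0^T(\cB(U^1)-\cB(U^2),Z_t)\,d\tau$ in the critical case, which is precisely the situation relevant to the von Karman and Berger plate models: a naive multiplier bound does not suffice, and one must combine the observability/``compensated compactness'' estimate of \cite{cl-mem} with the potential structure of the $B_i$ in order to shift a time derivative off $Z_t$ and land in norms in which the embedding is compact. The rest of the argument consists of routine multiplier calculations and book-keeping of the cross terms coming from $\cD$ and $\cK$, which are controlled by their nonnegativity.
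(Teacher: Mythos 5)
Your argument is essentially the paper's: the paper simply cites the abstract asymptotic-smoothness criterion of Chueshov--Lasiecka (Theorem~3.26 and Proposition~3.36 of \cite{cl-mem}) after checking the one model-specific condition $|(\cD V,W)|\le C_\eps(\cD V,V)+\eps\|\cA^{1/2}W\|^2$, whereas you unpack the proof of that cited machinery --- the multiplier/observability scheme, the role of the strict positivity of $\cD$, and the two alternatives of Assumption~\ref{as:a-comp}(ii) for making the nonlinear term contractive. The route and the key estimates coincide, so the proposal is correct and matches the paper's approach.
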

\begin{proof}
	We apply Theorem 3.26 and Proposition 3.36
	from \cite{cl-mem}. 
	Since
\begin{align*}
	|( \cD V,W)|\le & [(\cD V,V)]^{1/2}[(\cD W,W)]^{1/2} \le C [(\cD V,V)]^{1/2}\|\cA ^{1/2}W\| \\ \le &
	 C_\eps (\cD V,V)+ \eps \|\cA^{1/2} W\|^2,
\end{align*}
relation (3.60) in \cite[p.54]{cl-mem} obviously holds 
	in a simplified  form. 	
\end{proof}

One can see from the energy inequality in \eqref{8.1.4new}
that the system $(\cH,S_t)$ is gradient
with the full energy $\cE(U_0;U_1)$ as a strict Lyapunov function (see the Appendix for the corresponding definitions).
Therefore
by Corollary~2.29 \cite{cl-mem} 
(see Theorem \ref{Theorem 2.2.} and Remark \ref{re:th2.2} in the Appendix) 
 to guarantee the existence of a global attractor we need 
boundedness of equilibria.
This leads to the following assertion.

\begin{theorem}\label{th:atr-setN}
Let Assumptions \ref{as:a-comp} be in force. Assume in addition
that there exist  $\nu <\nu_i$
 and $C\ge 0$ such that
\begin{equation}\label{8.1.1c1a}
 \nu \| A^{1/2}u\|^2 +(B_i(u),u)+C \ge 0\;,\quad u\in H^{1/2},~~i=1,2.
\end{equation}
Then the system 
$(\cH,S_t)$ generated by problem \eqref{ver-problem} possesses a compact global attractor.	
\end{theorem}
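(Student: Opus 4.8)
The plan is to invoke the abstract criterion for existence of a compact global attractor for gradient systems that the paper has already set up: namely Corollary~2.29 of \cite{cl-mem} (Theorem~\ref{Theorem 2.2.} with Remark~\ref{re:th2.2} in the Appendix), which asserts that an asymptotically smooth gradient system with a strict Lyapunov function possesses a compact global attractor provided the set of equilibria is bounded. Three of these four ingredients are already in hand: asymptotic smoothness is Proposition~\ref{th:-a-comp} under Assumption~\ref{as:a-comp}, and the gradient structure together with the fact that the full energy $\cE(U_0;U_1)$ is a strict Lyapunov function follows from the energy inequality \eqref{8.1.4new} as noted just before the theorem. So the entire content of the proof reduces to showing that condition \eqref{8.1.1c1a} forces the set $\cN$ of equilibria of $(\cH,S_t)$ to be bounded in $\cH$.

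To do this, I would first characterize the equilibria: a stationary solution is a pair $(U;0)$ with $U=(u;v)$ satisfying $\cA U + \cK U + \cB(U)=0$ in $\bar H^{-1/2}$, i.e. the coupled elliptic system $\nu_1 A u + K_{11}u+K_{12}v+B_1(u)=0$ and $\nu_2 A v + K_{21}u+K_{22}v+B_2(v)=0$. Pairing the first equation with $u$, the second with $v$, and adding, the cross terms assemble into $(\cK U,U)\ge 0$ by the nonnegativity in Assumption~\ref{A1-sync2}(iii), so they can simply be dropped to obtain the inequality
\[
\nu_1\|A^{1/2}u\|^2 + (B_1(u),u) + \nu_2\|A^{1/2}v\|^2 + (B_2(v),v) \le 0 .
\]
Now I would split each $\nu_i$ as $\nu_i = \nu + (\nu_i-\nu)$ with $\nu<\nu_i$, apply \eqref{8.1.1c1a} to absorb $\nu\|A^{1/2}u\|^2+(B_1(u),u)\ge -C$ and likewise for $v$, and conclude
\[
(\nu_1-\nu)\|A^{1/2}u\|^2 + (\nu_2-\nu)\|A^{1/2}v\|^2 \le 2C ,
\]
which gives a uniform bound on $\|A^{1/2}u\|$ and $\|A^{1/2}v\|$, hence on $\|U\|_{\bar H^{1/2}}$, hence on the $\cH$-norm of the equilibrium $(U;0)$ since the velocity component vanishes. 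Thus $\cN$ is bounded.

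The only genuine subtlety — and the step I would be most careful about — is the regularity/duality bookkeeping needed to justify the pairings above: the equilibrium equation a priori holds only in $\bar H^{-1/2}$, $B_i$ maps $H^{1/2}$ into $H$ so $(B_i(u),u)$ is a legitimate $H$-pairing, and $(K_{ij}\cdot,\cdot)$ is finite and the full $2\times2$ form $(\cK U,U)$ is nonnegative on $\bar H^{1/2}$ by hypothesis, so every term is well-defined on the energy space and the computation is rigorous; one should also note that any equilibrium indeed lies in $\bar H^{1/2}\times\bar H$ so that the characterization of $\cN$ as a subset of $\cH$ is the correct one. Once the boundedness of $\cN$ is established, the abstract theorem delivers the compact global attractor with no further work. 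I do not expect any real obstacle beyond this routine verification; the essential idea is simply that the coupling terms are "free" (nonnegative) and can be discarded, reducing the coercivity question to the decoupled condition \eqref{8.1.1c1a} imposed on each subsystem separately.
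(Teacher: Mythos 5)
Your proposal is correct and follows essentially the same route as the paper: reduce to boundedness of equilibria via the gradient structure and asymptotic smoothness already established, then test the stationary system with $(u;v)$, discard the nonnegative coupling form $(\cK U,U)$, and use \eqref{8.1.1c1a} with $\nu<\nu_i$ to get a uniform bound on $\|A^{1/2}u\|$ and $\|A^{1/2}v\|$. The extra care you take with the duality pairings only makes explicit what the paper leaves implicit.
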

\begin{proof}
Stationary solutions $U=(u;v)\in \bar{H}^{1/2}$ solve the problem
 \begin{equation*}
    \nu_1 A u 
    + K_{12}u+ K_{12}v +B_1(u)=0,
 \end{equation*}
 \begin{equation*}
    \nu_2 A v + K_{21}u+ K_{22}v +B_2(v)=0.
 \end{equation*}
Using the multipliers $u$ for the first equation and $v$ for the second and also positivity of the operator $\cK$, we obtain that
\[
 \nu_1 \|\cA^{1/2}u\|^2  + \nu_2 \|\cA^{1/2}v\|^2 +(B_i(u),u)+(B_i(v),v) \le 0
 \]
By \eqref{8.1.1c1a} this yields 
$  \|\cA^{1/2}u\|^2  +  \|\cA^{1/2}v\|^2\le C$
(with $C$ independent of $\cD$ and $\cK$). Thus stationary solutions are bounded. 
\end{proof}

\subsection{Uniform dissipativity}

For synchronization phenomena it is important 
to have bounds for the attractor independent of interaction operators $\cD$ and $\cK$.
In spite of the set of stationary 
solutions is uniformly bounded \wrt $\cD$ and $\cK$ Theorem~\ref{th:atr-setN} does not provide appropriate bounds for the attractor. Below we use an approach based  Lyapunov type functions 
which allows us to prove uniform dissipativity  
 of the system $(\cH,S_t)$.
To simplify argument it is convenient to introduce intensity factors $\alpha$ and $\vka$ for interactions in velocities and displacements. 
Moreover, we assume a particular structure of $\cD$ related with interaction operator $\cK$.
So instead of \eqref{ver-problem}
we consider
\begin{equation}\label{ver-problem-kappa}
   U_{tt} +\cA U+(\cD_0+ \alpha {\cal K}) U_t+ \vka \cK U +{\cal B}(U)=0,~~ U(0)=U_0,~~U_t(0)=U_1,
\end{equation}
where $\alpha$ and $\vka$ are nonnegative parameters.

In addition to Assumption~\ref{A1-sync2}
we impose the following hypotheses. 
\begin{assumption}\label{as:u-diss}
\begin{enumerate}
  \item[(i)] The operator $\cD_0$ is bounded from $\bar H^{1/2}$ into $\bar H$ and  there exist  $c_0>0$ and $\bar\alpha\ge 0$ such that
  \begin{equation*}
  ((\cD_0+\bar\alpha\cK)W,W)\ge c_0\|W\|^2,~~~W\in \bar H^{1/2};
\end{equation*}
  \item[(ii)] there exists $\eta <1$ and $\delta,C>0$ such that\footnote{The relation in \eqref{Pi-0-es} is the standard requirement in many second order in time models, see, e.g., \cite{cl-mem}. } 
\begin{equation}\label{Pi-0-es}
 \delta \bar \Pi_0(U) -(\cB (U),U)\le  \eta \|\cA^{1/2}U\|^2 +C \;,\quad U\in \bar H^{1/2},
\end{equation}
where $\bar \Pi_0(U)= \Pi_{01}(u)+ \Pi_{02}(v)$
with $U=(u;v)$.   
\end{enumerate}	
\end{assumption}

\begin{theorem}\label{th:-u-dis}
Let Assumptions \ref{A1-sync2} and \ref{as:u-diss} be in force. Then 
for every $\alpha\ge\bar \alpha$ and $\vka\ge 0$
the system 
$(\cH,S_t)$ generated by problem \eqref{ver-problem-kappa} is dissipative\footnote{See the Appendix for the notions related with this theorem.}
 with an absorbing ball  of the size  independent
of  $(\alpha;\vka) \in     \La\equiv \{ \alpha\ge\bar \alpha,~\vka\ge 0\}$. More precisely,	there exists $R$ independent of  
$(\alpha;\vka) \in\La$ such that the set
\begin{equation}\label{set-B-uni}
  \fB=
\left\{ (U_0;U_1)\in \cH\, : \|U_1\|^2+  \|\cA^{1/2} U_0\|^2
+\vka \|\cK^{1/2}U_0\|^2\le R^2\right\}
\end{equation}
is absorbing.
\end{theorem}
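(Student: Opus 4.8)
The plan is to construct a Lyapunov-type functional $W(t) = \cE(U(t),U_t(t)) + \eps \Phi(t)$ where $\cE$ is the full energy from Theorem~\ref{t2.wp-smp} and $\Phi(t) = (U_t(t), U(t))$ is the standard multiplier term, and then to show that along solutions of \eqref{ver-problem-kappa} the quantity $W$ satisfies a differential inequality of the form $\dt W + \gamma W \le C$ with $\gamma>0$ and $C$, $\gamma$ \emph{independent of} $(\alpha;\vka)\in\La$. The key observation making uniform control possible is that the dangerous coupling terms $\alpha\cK U_t$ and $\vka\cK U$ enter the energy balance in a favorable way: in the energy relation \eqref{8.1.4} the term $\alpha(\cK U_t,U_t)\ge 0$ only helps dissipation, while $\vka(\cK U,U)$ is already absorbed into $\cE$ through $\cE_{int}$, and moreover the absorbing set \eqref{set-B-uni} is itself formulated with the $\vka\|\cK^{1/2}U_0\|^2$ term included precisely so that this coupling energy need not be bounded uniformly, only the ``diagonal'' part $\|U_1\|^2+\|\cA^{1/2}U_0\|^2$ must be.

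First I would differentiate $\cE$ along a strong solution, using \eqref{8.1.4} in differential form: $\dt\cE = -((\cD_0+\alpha\cK)U_t,U_t) \le -c_0\|U_t\|^2$ once $\alpha\ge\bar\alpha$, by Assumption~\ref{as:u-diss}(i) (the form $(\cD_0+\alpha\cK)\cdot$ dominates $(\cD_0+\bar\alpha\cK)\cdot$ since $\cK\ge0$ and $\alpha\ge\bar\alpha$). Next I would compute $\dt\Phi = \|U_t\|^2 - \|\cA^{1/2}U\|^2 - \vka(\cK U,U) - ((\cD_0+\alpha\cK)U_t,U) - (\cB(U),U)$, using the equation to replace $U_{tt}$. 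The terms $-\|\cA^{1/2}U\|^2$ and $-\vka(\cK U,U)$ have the right sign; the term $-(\cB(U),U)$ is handled by the structural hypothesis \eqref{Pi-0-es}, which gives $-(\cB(U),U)\le \eta\|\cA^{1/2}U\|^2 - \delta\bar\Pi_0(U) + C$ with $\eta<1$; and the cross term $-((\cD_0+\alpha\cK)U_t,U)$ must be estimated by Cauchy--Schwarz together with the nonnegativity of the form, as in the proof of Proposition~\ref{th:-a-comp}: $|((\cD_0+\alpha\cK)U_t,U)| \le C_{\eps'}((\cD_0+\alpha\cK)U_t,U_t) + \eps'\|\cA^{1/2}U\|^2$, and crucially the constant $C_{\eps'}$ here is uniform in $\alpha$ because the form $(\cD_0+\alpha\cK)\cdot$ is comparable (from above and below, on the relevant energy level) to a fixed equivalent norm — more precisely one uses $((\cD_0+\alpha\cK)U_t,U_t)$ itself as the absorbing quantity and $((\cD_0+\alpha\cK)U_t,U_t)\le (\text{const})(\cD_0 U_t,U_t) + \alpha(\cK U_t,U_t)$ is not needed; rather one keeps $\alpha(\cK U_t,U_t)$ on the good side. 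This is the step where care is needed to avoid picking up an $\alpha$-dependent constant.

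Combining, for $\eps>0$ small enough (chosen independently of $\alpha,\vka$) the term $\eps\dt\Phi$ contributes $-\eps(1-\eta-\eps')\|\cA^{1/2}U\|^2 - \eps\vka(\cK U,U) - \eps\delta\bar\Pi_0(U)$ plus an absorbable $+C\eps\|U_t\|^2$ which is dominated by the strict $-c_0\|U_t\|^2$ from $\dt\cE$ once $\eps<c_0/(2C)$ or so, and the $+\eps C$ constant. Since $\cE = \hf\|U_t\|^2 + \hf\|\cA^{1/2}U\|^2 + \hf\vka(\cK U,U) + \bar\Pi_0(U) + \bar\Pi_1(U)$ with $\bar\Pi_1$ controlled by a small fraction of the rest via Assumption~\ref{A1-sync2}(iv), one checks that $W = \cE + \eps\Phi$ is equivalent (up to additive constants) to $\|U_t\|^2 + \|\cA^{1/2}U\|^2 + \vka\|\cK^{1/2}U\|^2 + \bar\Pi_0(U)$ — all constants in this equivalence being independent of $(\alpha;\vka)$ — and that $\dt W + \gamma W \le C'$. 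Gronwall then yields $\limsup_{t\to\infty} W(t) \le C'/\gamma =: \tfrac12 R^2$ uniformly, so the set $\fB$ in \eqref{set-B-uni} is absorbing. Finally I would pass from strong to generalized solutions by the standard density argument, using the energy \emph{inequality} \eqref{8.1.4new} in place of the identity, which only strengthens the estimate.

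The main obstacle, as flagged above, is ensuring that the estimate of the mixed multiplier term $((\cD_0+\alpha\cK)U_t,U)$ and the absorption of $\alpha(\cK U_t,U_t)$ produce constants genuinely independent of the (arbitrarily large) coupling intensity $\alpha$; the trick is to never split off the $\alpha\cK$ part as an $\alpha$-sized operator but to treat $(\cD_0+\alpha\cK)U_t$ as a single entity, bounding $|((\cD_0+\alpha\cK)U_t,U)|$ by $[((\cD_0+\alpha\cK)U_t,U_t)]^{1/2}[((\cD_0+\alpha\cK)U,U)]^{1/2}$ and then using boundedness of $\cD_0:\bar H^{1/2}\to\bar H$ for the $\cD_0$-contribution to the second factor while the $\alpha\cK$-contribution to the second factor is exactly $\alpha(\cK U,U)=\vka^{-1}\cdot\vka\alpha(\cK U,U)$ — here one does need $\vka>0$, or one handles $\vka=0$ separately where the $\vka\cK U$ term is simply absent and the displacement coupling only sits inside the velocity damping, which is harmless. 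A secondary technical point is the treatment of $\bar\Pi_1$ and the lower bound $\bar\Pi_0\ge0$, which together guarantee $W$ is bounded below by the stated energy norm minus a constant; this is routine given Assumption~\ref{A1-sync2}(iv).
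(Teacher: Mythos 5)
Your overall strategy (energy plus a small multiple of the multiplier $(U_t,U)$, then Gronwall) is the right family of argument and is what the paper does, but there is a genuine gap at exactly the point you flag as ``the main obstacle'': the cross term $\alpha(\cK U_t,U)$ cannot be handled with a multiplier coefficient $\eps$ chosen \emph{independently} of $(\al;\vka)$. If you bound $|((\cD_0+\al\cK)U_t,U)|$ by $[((\cD_0+\al\cK)U_t,U_t)]^{1/2}[((\cD_0+\al\cK)U,U)]^{1/2}$, the second factor contains $\al(\cK U,U)$, and the only quantities available to absorb it are $\|\cA^{1/2}U\|^2$ (costing a factor $\al$) or $\vka(\cK U,U)$ (costing a factor $\al/\vka$). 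The region $\La=\{\al\ge\bar\al,\ \vka\ge0\}$ contains points with $\al/\vka$ arbitrarily large (indeed $\vka=0$, $\al\to\infty$), so the constant $C_{\eps'}$ in your inequality $|((\cD_0+\al\cK)U_t,U)|\le C_{\eps'}((\cD_0+\al\cK)U_t,U_t)+\eps'\|\cA^{1/2}U\|^2$ necessarily grows like $\al$, and $\eps\, C_{\eps'}((\cD_0+\al\cK)U_t,U_t)$ can no longer be absorbed by the single copy of $-((\cD_0+\al\cK)U_t,U_t)$ coming from $d\cE/dt$ unless $\eps\lesssim 1/\al$. Your fallback ($\al(\cK U,U)=\vka^{-1}\cdot\vka\al(\cK U,U)$, with $\vka=0$ ``handled separately'') does not close this: for $\vka$ small and $\al$ large the ratio is unbounded, and for $\vka=0$ the term $\eps\al(\cK U_t,U)$ is still present and is not harmless with fixed $\eps$. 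Note the paper explicitly advertises (after the theorem) that the point of this result is to remove the boundedness assumption on $\al/\vka$ from the finite-dimensional literature, so this is not a removable technicality.

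The paper's proof supplies two ingredients you are missing. First, instead of estimating the cross term, it cancels it exactly by augmenting the multiplier: $\Phi=\eta(U,U_t)+\mu(\cK U,U)$ with $2\mu=\vka+\eta\al$, so that the $(\cK U_t,U)$ contributions from the damping, from the interaction energy, and from $\frac{d}{dt}\mu(\cK U,U)$ cancel identically. Second, because $\mu$ may then exceed $\vka$ (spoiling the two-sided norm equivalence \eqref{V-eqv-norm}), the coefficient $\eta$ is chosen \emph{depending on} $(\al;\vka)$ --- e.g.\ $\eta=\tilde\vka\al^{-1}$ or $\eta=\al^{-1}$ --- via a case analysis over subregions of $\La$, and uniformity of the absorbing radius survives because in the final inequality $\frac{dV_\beta}{dt}+b_2\eta\beta_2^{-1}V_\beta\le\eta b_4$ the small parameter $\eta$ cancels between the decay rate and the source, leaving the asymptotic bound $b_4\beta_2/b_2$ independent of $\eta$ (only the absorption time degrades). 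Your write-up fixes $\eps$ once and for all and asserts a uniform $\gamma$, so this cancellation is never exploited; to repair the proof you would need both the corrected multiplier and the parameter-dependent choice of $\eps$ together with the verification that the source constant scales proportionally to $\eps$.
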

 We note that the estimate in \eqref{set-B-uni} improves the corresponding finite-dimensional statement 
 in \cite{ACH-97} which requires uniform boundedness of the ratio $\al/\vka$. As it is shown in Proposition~\ref{pr:qs-U}
 we need the latter property for uniform quasi-stability only.

\begin{proof} We use a slight modification of the standard method
(see, e.g., \cite{BV92,Chu99,temam})
 based on Lyapunov type functions.
Let $U(t)=(u(t);v(t))$ be a strong solution,
\[
E(t)\equiv E(U;U_t)=\hf \left(\|U_t(t)\|^2+  \|\cA^{1/2} U(t)\|^2\right)+\bar\Pi(U(t))
\]
with $\bar\Pi(U)=\Pi_1(u)+\Pi_2(v)$,
and $\Phi(t)=\eta (U,U_t)+\mu (\cK U,U)$,
where $\eta$  is a positive constant which will be chosen later and $2\mu=\vka+\eta \al$.
We consider 	the functional $V=E+\Phi$.
One can see that there exist $0<\eta_0<1$ and $\beta_i>0$ independent of  $(\alpha;\vka)$
such that 
\begin{equation}\label{V-eqv-norm}
 \beta_0\big[ E_*(t)+\vka \|\cK^{1/2} U(t)\|^2\big]-\beta_1 \le V\le 
 \beta_2\big[ E_*(t)+\mu  \|\cK^{1/2} U(t)\|^2\big]+\beta_3
\end{equation}
for all $\eta\in (0,\eta_0]$, where 
\[
E_*(t)\equiv E_*(U;U_t)=\hf \left(\|U_t(t)\|^2+  \|\cA^{1/2} U(t)\|^2\right)+\bar\Pi_0(U).
\]
Now on strong solutions using the energy relation in \eqref{8.1.4} we calculate the derivative
\begin{align*}
\frac{dV}{dt}=  -((\cD_0+\alpha \cK)U_t,U_t)	
+\eta\big[ \|U_t\|^2 -(\cD_0U_t,U)-(\cA U,U) -\vka (\cK U,U)-(\cB(U),U)\big].
\end{align*}
Since $\cD_0$ is bounded from $\bar H^{1/2}$ into $\bar H$, we obtain that
\[ 
|(\cD_0U_t,U)|\le \eps\|\cA^{1/2} U\|^2 +C\eps^{-1}\|U_t||^2,~~~\forall  \eps>0.
\]
Thus
by Assumption~\ref{as:u-diss}(ii) 
there exist $b_i>0$ independent
of $(\alpha,\vka)$ such that
\begin{align*}
\frac{dV}{dt}\le   -\big[((\cD_0+\alpha \cK)U_t,U_t)
-b_1\eta\|U_t\|^2\big]	
-b_2\eta\big[E_*(t) +\vka (\cK U,U)\big] +\eta b_3,
\end{align*}
 This implies that there exists $0<\eta_*\le \eta_0$ independent of 
  $(\alpha,\vka)\in\Lambda$ such that
 \begin{equation}\label{v-beta-bnd}
\frac{dV_\beta}{dt}+
b_2\eta\big[E_*(t) +\vka (\cK U,U)\big] \le \eta b_3
\end{equation}
 for all $(\alpha,\vka)\in\Lambda$ and $\eta\in (0,\eta_*]$, where
$V_\beta=V+\beta_1>0$.

Now we split the parametric region $\La$ into
several subdomains. 

We start with the following case.
Let $\vka_*>0$ and $\alpha_*>\bar\al$ be fixed.
 We take $0<\tilde \vka\le \vka_*$ such that
 $\al_*>\tilde \vka/\eta_*$ Then  we take
 $\eta=\tilde\vka\al^{-1}$.  
 In this case $\eta \le\eta_*$ and
 \[
 \mu=\hf (\vka +\eta \al)=\hf (\vka + \tilde\vka)\le \vka.
 \]
 Thus for all $\vka\ge \vka_*$ and $\al\ge \al_*$ we have that
 \[
\frac{dV_\beta}{dt}+	
b_2\eta\big[E_*(t) +\mu (\cK U,U)\big] \le \eta b_3.
\]  
 In particular, \eqref{V-eqv-norm} yields
 \begin{equation*}
  \frac{dV_\beta}{dt}+
b_2 \eta \beta_2^{-1}  V_\beta\le \eta b_4~~\mbox{with}~~ b_4=b_3+ \frac{\beta_1+\beta_3}{\beta_2},
\end{equation*}
where $b_i$ and $\beta_i$ do not depend on $\al$ and $\vka$.
This implies that
\begin{equation*}
V_\beta (t)\le V_\beta (0)e^{-b_2\eta \beta_2^{-1} t}
+\frac{b_4\beta_2}{b_2}. 
\end{equation*}
Since the value 
$b_4 b_2^{-1}\beta_2$
is independent of
$\vka\ge 0$ and $\al\ge \bar\al$, 
we can conclude from the previous argument that
 the set $\fB$ is absorbing for all
 $\vka>0$ and $\al> \bar\al$ 
 with $R$ independent of $\vka$ and $\al$.
\par 
In the case when $\vka=0$ 
from \eqref{v-beta-bnd}
we obtain
\[
\frac{dV_\beta}{dt}+
b_2\eta E_*(t)  \le \eta b_3.
\]
Now we take $\al\ge \eta_*^{-1}$ and $\eta=\al^{-1}$. In this case $\eta\le \eta_*$ and $\mu=1/2$. Therefore the conclusion follows from  the estimate 
$\|\cK^{1/2} U\|\le c\|\cA^{1/2} U\|$ by the same argument as above. 

In the case $\vka=0$ and 
$\bar\al\le\al\le  \eta_*^{-1}$ the conclusion is obvious.
\par 
So the remaining case is $\vka> 0$ and $\al=\bar\al$. Now we can take 
$\eta=\min\{\eta_*, \vka \bar\al^{-1}\}$ 
when $\bar\al>0$. It is clear that $\mu \le \vka$ for this case. Thus we can argue as above.
 In the case $\bar\al=0$ the relation 
 $\mu\le\vka/2$ holds automatically.
This completes the proof Theorem \ref{th:-u-dis}. 
\end{proof}

This theorem and Proposition~\ref{th:-a-comp}
immediately imply  the following result on the existence of a global attractor.
 
\begin{theorem}\label{th:-u-attr}
Let Assumptions \ref{A1-sync2}, 
\ref{as:a-comp}(ii) and \ref{as:u-diss} be in force. Then for every $(\alpha;\vka) \in\La$ 
the system 
$(\cH,S_t)$ generated by problem \eqref{ver-problem-kappa} 
possesses a compact global attractor $\fA$.
For every full trajectory $Y=\{(U(t);U_t(t))\, :
t\in\R\}$ from the attractor 
\begin{equation}\label{bound-attr}
\sup_{t\in\R} \left\{\|U_t(t)\|^2+  \|\cA^{1/2} U(t)\|^2
+\vka \|\cK^{1/2}U(t)\|^2\right\} +
\int_{-\infty}^{\infty}(\cD_0+\al\cK)U_t(\t), U_t(\t))\ d\t\le R^2
\end{equation}
for some $R$ independent of  $(\alpha;\vka) \in\La$.
\end{theorem}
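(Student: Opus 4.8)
The plan is to read the existence of the attractor off the abstract results already established and then to squeeze the uniform bound \eqref{bound-attr} out of the energy relation. First I would observe that, although Assumption~\ref{as:a-comp}(i) is not postulated here, the damping operator $\cD_0+\al\cK$ of problem~\eqref{ver-problem-kappa} is strictly positive for every $\al\ge\bar\al$: since $\cK$ is nonnegative, $((\cD_0+\al\cK)W,W)=((\cD_0+\bar\al\cK)W,W)+(\al-\bar\al)(\cK W,W)\ge c_0\|W\|^2$ for $W\in\bar H^{1/2}$, by Assumption~\ref{as:u-diss}(i). Hence \eqref{ver-problem-kappa} satisfies the hypotheses of Proposition~\ref{th:-a-comp} (with damping $\cD_0+\al\cK$ and interaction operator $\vka\cK$), so $(\cH,S_t)$ is asymptotically smooth; by Theorem~\ref{th:-u-dis} it is also dissipative, with the absorbing ball $\fB$ from \eqref{set-B-uni}. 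The standard criterion for dissipative asymptotically smooth systems (see Theorem~\ref{Theorem 2.2.} in the Appendix) then yields the compact global attractor $\fA$. Since $\fB$ is closed and absorbing we have $\fA\subseteq\fB$, and the radius $R$ in \eqref{set-B-uni} does not depend on $(\al;\vka)\in\La$; thus every full trajectory $Y=\{(U(t);U_t(t)):t\in\R\}\subset\fA$ satisfies $\|U_t(t)\|^2+\|\cA^{1/2}U(t)\|^2+\vka\|\cK^{1/2}U(t)\|^2\le R^2$ for all $t\in\R$, which is exactly the supremum term in \eqref{bound-attr}.

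For the dissipation integral the plan is to integrate the energy inequality \eqref{8.1.4new} over $(-\infty,\infty)$ along a full trajectory in $\fA$, and for this I need a bound $|\cE(U(t),U_t(t))|\le M$ on $\fA$ with $M$ independent of $(\al;\vka)$. The terms $\|U_t\|^2$, $\|\cA^{1/2}U\|^2$ and $\vka\|\cK^{1/2}U\|^2=\vka(\cK U,U)$ are $\le R^2$ by the previous step. For the potential part $\bar\Pi(U)=\Pi_1(u)+\Pi_2(v)$ I would use the splitting $\Pi_i=\Pi_{0i}+\Pi_{1i}$ from Assumption~\ref{A1-sync2}(iv): since the $u$- and $v$-components of $\fA$ lie in a fixed ball of $H^{1/2}$ whose radius is controlled by $R$, local boundedness of the nonnegative functionals $\Pi_{0i}$ gives $0\le\Pi_{0i}(\cdot)\le C(R)$ on $\fA$, and then $|\Pi_{1i}(\cdot)|\le\eta[\|A^{1/2}\cdot\|^2+\Pi_{0i}(\cdot)]+C_\eta$ yields $|\bar\Pi|\le C(R)$ there. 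Hence $|\cE|\le M=M(R)$ on $\fA$, uniformly in $(\al;\vka)\in\La$.

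Finally, the restriction of a full trajectory in $\fA$ to any interval $[s,t]$ is a generalized solution of \eqref{ver-problem-kappa}, so \eqref{8.1.4new}, applied on $[s,t]$ by translation invariance, gives
\[
\int_s^t ((\cD_0+\al\cK)U_t(\t),U_t(\t))\,d\t\le \cE(U(s),U_t(s))-\cE(U(t),U_t(t))\le 2M,\qquad s<t.
\]
Letting $s\to-\infty$ and $t\to+\infty$ (the integrand is nonnegative) yields $\int_{-\infty}^{\infty}((\cD_0+\al\cK)U_t(\t),U_t(\t))\,d\t\le 2M$; combining this with the supremum bound and enlarging $R$ so as to absorb $2M$ (still a quantity independent of $(\al;\vka)\in\La$) gives \eqref{bound-attr}. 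The one genuinely non-routine point is the uniform-in-$(\al;\vka)$ control of the potential energy $\bar\Pi$ on $\fA$: it relies on the fact that the absorbing set $\fB$ bounds the $\bar H^{1/2}$-norm with a radius that does not depend on the coupling parameters (Theorem~\ref{th:-u-dis}), together with the structural hypotheses on the $\Pi_i$ in Assumption~\ref{A1-sync2}(iv). The remaining steps — existence of $\fA$, the inclusion $\fA\subseteq\fB$, and the integration of the energy relation — are entirely standard.
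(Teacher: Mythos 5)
Your proposal is correct and follows essentially the same route as the paper: establish asymptotic smoothness via Proposition~\ref{th:-a-comp} (noting that $\cD_0+\al\cK$ is strictly positive for $\al\ge\bar\al$ by Assumption~\ref{as:u-diss}(i)), combine with the uniform dissipativity of Theorem~\ref{th:-u-dis} to get the attractor inside $\fB$, and then integrate the energy inequality \eqref{8.1.4new} along full trajectories to bound the dissipation integral; you merely supply the details (the uniform bound on $\bar\Pi$ over $\fA$) that the paper leaves implicit. The only slip is the citation: the existence of the attractor for a dissipative asymptotically smooth system is Theorem~\ref{th:main-attractor-a}, not the gradient-system criterion Theorem~\ref{Theorem 2.2.}.
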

\begin{proof}
We first apply the standard result on the existence of a global attractor, see Theorem \ref{th:main-attractor-a} in the Appendix. This attractor belongs to the set $\fB$ defined in \eqref{set-B-uni}. 
This implies an uniform bound for the supremum in \eqref{bound-attr}. Using the energy relation in \eqref{8.1.4new} we obtain the corresponding bound for
the dissipation integral in \eqref{bound-attr}. 
\end{proof}

\bigskip

%
%
%
%
%

\subsection{Quasi-stability}
The uniform bounds for the attractor given by
Theorem \ref{th:-u-attr}
are not sufficient to perform the large coupling limit $\al\to+\infty$ and/or $\vka\to+\infty$ in the phase state of the system.
One of  achievements  of this section is stronger uniform estimates for the attractor size.
For this we apply the quasi-stability 
method in the form suggested in
\cite{cl-mem}
(see also \cite{C-Eller-L-2004,cl-jdde,jde,jde07,cl-book} and the survey in the recent monograph \cite{Chu-dqsds15}).
This method makes it also possible to proof finite-dimensionality of the attractor and obtain its smoothness
properties.

To apply the quasi-stability method
we need additional hypotheses concerning the nonlinear forces $B_i(u)$.
\begin{assumption}\label{as-sm-tr}{\rm
Assume that
\begin{itemize}
  \item  $B_i(u)=\Pi_i'(u)$ with the functional $\Pi_i\, :\, H^{1/2} \mapsto \R$
  which  is a Fr\'echet $C^3$-mapping.
  \item The second  $\Pi_i^{(2)}(u)$ and the third $\Pi_i^{(3)}(u)$ Fr\'echet
  derivatives of $\Pi_i(u)$ satisfy the conditions
\begin{equation}\label{pi-2-bnd}
\left|\langle \Pi_i^{(2)}(u); v, v\rangle\right|\le C_\rho \|A^{\sigma}v\|^2,
\quad v\in H^{1/2},
\end{equation}
for some $\sigma<1/2$, and
\begin{equation}\label{pi-3-bnd}
\left|\langle \Pi_i^{(3)}(u); v_1, v_2, v_3\rangle\right|\le
C_\rho \|A^{1/2}v_1\| \|A^{1/2}v_2\| \|v_3\| ,\quad v_i\in H^{1/2},
\end{equation}
for all  $u\in H^{1/2}$ such that $\|A^{1/2}u\|\le \rho$,
where $\rho>0$ is arbitrary and $C_\rho$ is a positive constant.
Here above $\langle \Pi_i^{(k)}(u) ; v_1,\ldots,v_k\rangle$
denotes the value of the derivative  $\Pi_i^{(k)}(u)$ on elements $v_1,\ldots,v_k$.
\end{itemize}
}
\end{assumption}
This assumption concerning nonlinear feedback forces $B_i(u)$  appeared earlier
in the case of  systems with nonlinear
damping (see \cite[p. 98]{cl-mem} and also \cite{kolbasin}) to cover
the case of critical nonlinearities.
We  note that Assumption \ref{as-sm-tr} holds in both cases of the von Karman and Berger models (see \cite{cl-mem} p.\ 156 and p.\ 160 respectively). Moreover, as it is shown in \cite[p.137]{cl-mem}, this Assumption~\ref{as-sm-tr} is also true in the case of the coupled  3D wave equation in a bounded domain $\Omega\subset \R^3$ of the form
\begin{subequations}\label{3d-wave}
\begin{align}
& u_{tt}+\s_1 u_t -\Delta u+ k_1(u-v) +\f_1(u)=f_1(x), \quad u\big|_{\partial\Omega}=0, \\
& v_{tt}+\s_2 v_t -\Delta v+ k_2(u-v) +\f_2(u)=f_2(x), \quad  v\big|_{\partial\Omega}=0,
\end{align}
\end{subequations}
provided $\f_i\in C^2(\R)$ possesses the property $|\f_i''(s)|\le C(1+|s|)$
for all $s\in\R$, the parameters $\sigma_i$ and $k_i$ are nonnegative.
Thus our abstract  model covers the case of 3D wave dynamics 
with a {\em critical} force term. We refer to Section \ref{sec:cpl-wave}
for a further discussion concerning nonlinear wave equations.
\par
Recall that  the Fr\'echet derivatives  $\Pi^{(k)}(u)$  of the functional
$\Pi$ are {\it symmetric} $k$-linear continuous forms on $H^{1/2}$
 (see, e.g., \cite{cartan}).
Moreover, if $\Pi\in C^3$, then $(B(u),v)\equiv\langle \Pi'(u); v\rangle$
is $C^2$-functional for every fixed  $v\in H^{1/2}$ and
the following Taylor's expansion holds
\begin{equation}\label{taylor}
(B(u+w)- F(u), v)=
\langle \Pi^{(2)}(u); w, v\rangle
+\int_0^1(1-\lambda)
\langle \Pi^{(3)}(u+\lambda w);w, w, v\rangle d\lambda
 \end{equation}
for any $u,v\in H^{1/2}$ \cite{cartan}.   If
Assume that $u(t)$ and $z(t)$ belong
 to the class $C^1\big(a,b; \sD(\cA^{1/2})\big)$ for some interval $[a,b]\subseteq \R$.
Then, by the differentiation rule for composition of mappings \cite{cartan} and using the symmetry of the form $\Pi^{(2)}(u)$, we have that
\[
\frac{d}{dt}\langle \Pi^{(2)}(u); z, z\rangle=
\langle \Pi^{(3)}(u);u_t, z, z\rangle +2\langle \Pi^{(2)}(u); z, z_t\rangle.
\]
Therefore from \eqref{taylor} we obtain the following representation which is important in our further considerations:
\begin{equation}\label{newsm}
(B_i(u(t)+z(t))  - B_i(u(t)), z_t(t) )  = \frac{d}{dt} Q_i(t) +R_i(t),\quad t\in [a,b]\subseteq \R,
\end{equation}
with
\begin{equation}\label{q-new}
  Q_i(t)=-\frac12\langle \Pi_i^{(2)}(u(t)); z(t), z(t)\rangle
\end{equation}
and
\begin{equation}\label{r-new}
 R_i(t)= -\frac{1}{2} \langle \Pi_i^{(3)}(u);u_t, z, z\rangle+
 \int_0^1(1-\lambda)\langle \Pi_i^{(3)}(u+\lambda z); z, z ,z_t\rangle d\lambda.
\end{equation}
As we will see below the representation in \eqref{newsm} and 
  the hypotheses
listed in Assumption~\ref{as-sm-tr}
can be avoided if 
 the nonlinear forces $B_i(u)$ are subcritical, i.e.,
\begin{equation}\label{8.1.1c-sub}
\exists\, \sigma_0<1/2:~~\|B_i(u_1) -B_i(u_2)|~\leq~L(\varrho) \|\cA^{\sigma_0}( u_1-u_2)\|,\quad
\forall \|\cA^{1/2} u_i \|  \leq \varrho.
\end{equation}

The representation in \eqref{newsm} leads to
 the following assertion which, in fact, is proved in \cite{cl-mem} (see (4.38), p.\ 99), but without control of the parameters $\al$ and $\vka$.

\begin{proposition}\label{pr:qs-U}
Let Assumptions \ref{A1-sync2} and \ref{as:u-diss} be in force.
In addition 
we suppose that either Assumption \ref{as-sm-tr} or relation (\ref{8.1.1c-sub})
holds.
Let $\cM\subset \cH$ be a bounded forward invariant set \wrt 
$S_t$ and 
 $U^i=S_tU^i_0,\ i=1,2$ be two solutions   to (\ref{ver-problem-kappa}) 
with   (different) initial data $U_0^i\in \cM$. 
Let $Z=U^1-U^2$. Then there exist $C, \ga>0$ such that
\begin{equation} \label{stab-est-mu}
    E_Z(t) \leq C E_Z(0) e^{-\ga t} +
    C \underset{[0,t]}{\max}\|\cA^{\sigma}Z(\t)\|^2, \quad \forall t>0,
\end{equation}
where $0\le\sigma<1/2$ and  
\[
E_Z(t)=\frac12\left(\|Z_t(t)\|^2+ \|\cA^{1/2}Z(t)\|^2 +\vka \|\cK^{1/2} Z\|^2\right).
\]
If $\cM$ is a uniformly  bounded in $\cH$ \wrt $(\al;\vka)\in \Lambda$ and
$$(\al;\vka)\in \Lambda_\beta\equiv
\{ (\al;\vka)\in \La \, :\, \alpha\le \beta(1+\vka)\}$$ for some $\beta>0$, then the constants $C,\gamma$ independent of $(\al;\vka)$
but can depend on $\beta$.
\end{proposition}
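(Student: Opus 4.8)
The plan is to derive a quasi-stability estimate for the difference $Z=U^1-U^2$ by writing the equation satisfied by $Z$, testing it with $Z_t$ (and a suitable multiplier), and carefully tracking how the constants depend on $\alpha$ and $\vka$. Subtracting the two copies of \eqref{ver-problem-kappa}, the function $Z$ solves
\[
Z_{tt}+\cA Z+(\cD_0+\alpha\cK)Z_t+\vka\cK Z + \big[\cB(U^1)-\cB(U^2)\big]=0,
\]
with the nonlinear increment $\cB(U^1)-\cB(U^2)$ having entries $B_i(u^i+z^i)-B_i(u^i)$, $z^i$ being the components of $Z$. Multiplying by $Z_t$ and using the energy structure gives
\[
\frac{d}{dt}E_Z(t) = -\big((\cD_0+\alpha\cK)Z_t,Z_t\big) - \big(\cB(U^1)-\cB(U^2),Z_t\big),
\]
where $E_Z$ is exactly the quantity in the statement. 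The damping term on the right is nonpositive (by Assumption~\ref{as:u-diss}(i) together with nonnegativity of $\cK$); the issue is to control the nonlinear term and then to produce exponential decay rather than mere dissipation. For the decay one adds the standard Lyapunov perturbation $\Phi_Z(t)=\eta(Z,Z_t)+\text{(terms in }\cK)$, mimicking the functional $V$ built in the proof of Theorem~\ref{th:-u-dis}, so that $V_Z=E_Z+\Phi_Z$ satisfies a differential inequality of the form $\frac{d}{dt}V_Z+\gamma V_Z\le C\,\text{(nonlinear remainder)}$ with $\gamma,C$ controlled. Here is exactly where the constraint $(\alpha;\vka)\in\Lambda_\beta$ enters: to absorb the cross term $\eta(\cD_0 Z,Z_t)$ and the term $\eta\alpha(\cK Z_t,Z)$ into the good quantities one needs $\eta\alpha$ comparable to $\vka$ (or smaller), which is possible with $\eta$ bounded below precisely when $\alpha\le\beta(1+\vka)$; without this one cannot keep $\eta$ away from zero and the decay rate degenerates.

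The main work is estimating the nonlinear term $\big(\cB(U^1)-\cB(U^2),Z_t\big)$ in a way that produces only a lower-order "compact" contribution $\max_{[0,t]}\|\cA^\sigma Z\|^2$ on the right, with no dependence on $\alpha,\vka$ in the constants. In the subcritical case \eqref{8.1.1c-sub} this is immediate: one bounds $\|B_i(u^i+z^i)-B_i(u^i)\|\le L(\varrho)\|\cA^{\sigma_0}z^i\|$ and then uses Young's inequality together with the damping term to soak up $\|Z_t\|^2$, integrating over $[0,t]$. In the critical case one cannot differentiate $\|\cA^\sigma Z\|$ away so cheaply, and one invokes the Taylor representation \eqref{newsm}: $(B_i(u^i+z^i)-B_i(u^i),z^i_t)=\frac{d}{dt}Q_i(t)+R_i(t)$. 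The term $\frac{d}{dt}Q_i$ is moved under the time derivative into the Lyapunov functional (using \eqref{pi-2-bnd}, $Q_i(t)$ is dominated by $C_\rho\|\cA^\sigma Z\|^2$, hence harmless and lower order), while $R_i(t)$, by \eqref{pi-3-bnd}, is bounded by $C_\rho(\|\cA^{1/2}u^i_t\|+\|\cA^{1/2}z^i\|)\|\cA^{1/2}z^i\|\|z^i_t\|$; the factor $\|\cA^{1/2}z^i\|\|z^i_t\|\le E_Z$ is absorbable with a small coefficient after using the uniform bound on $\cM$, and the remaining pieces integrate to give the compact term. Since the coefficients $C_\rho$ depend only on the (uniform) size $\rho$ of $\cM$ and not on $\alpha,\vka$, the resulting constants $C,\gamma$ depend only on $\beta$ (through the Lyapunov construction) and on the a priori bound for $\cM$.

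Assembling these pieces: one obtains $\frac{d}{dt}V_Z+\gamma V_Z\le C\max_{[0,t]}\|\cA^\sigma Z\|^2$ with $V_Z$ equivalent to $E_Z$ uniformly in $(\alpha;\vka)\in\Lambda_\beta$ (the equivalence constants being controlled for the same reason the analogous \eqref{V-eqv-norm} was controlled in Theorem~\ref{th:-u-dis}), and Gronwall's inequality yields \eqref{stab-est-mu}. For the first assertion of the proposition — where no uniformity is claimed — one simply fixes $(\alpha;\vka)$ and runs the same argument without tracking the $\alpha,\vka$-dependence; this is essentially the computation of \cite{cl-mem}, (4.38), p.~99. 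The hard part is genuinely the bookkeeping in the critical case: making sure that every constant entering the Lyapunov functional, its norm-equivalence, and the absorption of $R_i$ is independent of $\alpha,\vka$ throughout $\Lambda_\beta$, which forces one to redo the $\eta$-selection of Theorem~\ref{th:-u-dis} rather than cite a black-box quasi-stability lemma.
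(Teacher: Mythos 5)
Your overall strategy (Lyapunov perturbation of $E_Z$ plus the Taylor representation \eqref{newsm} for the critical nonlinearity, with the constraint $\alpha\le\beta(1+\vka)$ entering when the cross term $\eta\alpha(\cK Z_t,Z)$ is absorbed) is in the right spirit, but it is organized differently from the paper, which does not work with a pointwise differential inequality: it starts from an integral observability estimate over a long interval $[0,T]$ (Lemma~3.23 of \cite{cl-mem}), bounds the nonlinear functional $\Psi_T$ via Proposition~4.13 of \cite{cl-mem} to reach a one-step contraction $E_Z(T)\le qE_Z(0)+a(T)\int_0^T d(\tau)\|\cA^{1/2}Z(\tau)\|^2\,d\tau+b(T)\sup_{[0,T]}\|\cA^{\sigma}Z\|^2$ with $q<1$, and then iterates over intervals $[mT,(m+1)T]$ using the finiteness of the dissipation integral (the procedure of \cite[p.~100]{cl-mem}). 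This structural difference matters, because your shortcut breaks down exactly at the step the paper's machinery is designed to handle.

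The gap is in your treatment of $R_i(t)$. From \eqref{r-new} and \eqref{pi-3-bnd} (placing $u_t$ in the \emph{third} slot by symmetry, since $\|\cA^{1/2}u^i_t\|$ is not controlled on a bounded set $\cM\subset\cH=\bar H^{1/2}\times\bar H$ --- your stated bound containing $\|\cA^{1/2}u^i_t\|$ is not available), the correct estimate is $|R_i(t)|\le C_\rho\big(\|u^i_t(t)\|+\|z^i_t(t)\|\big)\|\cA^{1/2}z^i(t)\|^2$, i.e.\ a term of the form $d(t)^{1/2}E_Z(t)$ with $d(t)=\|U^1_t\|^2+\|U^2_t\|^2$. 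You claim this is ``absorbable with a small coefficient after using the uniform bound on $\cM$,'' but the uniform bound on $\cM$ only gives $d(t)^{1/2}\le R$ with $R$ the (large) radius of $\cM$; the resulting term $C_\rho R\,E_Z$ cannot be absorbed into $-\gamma V_Z$, since $\gamma$ is dictated by the damping and cannot be made to dominate $C_\rho R$. To close the argument you must exploit that $d$ is \emph{integrable} on $(0,\infty)$ uniformly for data in $\cM$ (a consequence of the energy identity and Assumption~\ref{as:u-diss}(i)): either split $d^{1/2}\|\cA^{1/2}Z\|^2\le\eps\|\cA^{1/2}Z\|^2+C_\eps d\,\|\cA^{1/2}Z\|^2$ and run Gronwall with the integrable coefficient $C_\eps d(t)$ (so that the integrating factor $e^{C_\eps\int_0^t d}$ stays bounded by $e^{C_\eps C(\cM)}$), or follow the paper's interval-iteration argument. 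Your write-up invokes neither mechanism and instead passes directly to ``$\frac{d}{dt}V_Z+\gamma V_Z\le C\max_{[0,t]}\|\cA^\sigma Z\|^2$ and Gronwall,'' which is precisely the step that fails for critical nonlinearities. The subcritical branch of your argument and the discussion of where $\Lambda_\beta$ enters are fine.
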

\begin{proof}
We use the same line of argument as in 
\cite{cl-mem} and start with the following relation (which follows from Lemma~3.23 in \cite{cl-mem}):
\begin{align*}
 TE_Z(T) +
\int_{0}^{T} E_Z(t) dt
\le & c\left\{\int_0^T ((D_0+\al\cK) Z_t,Z_t) dt\right.
  \\
 & + \left.  \int_0^T \left| ((D_0+\al\cK) Z_t,Z)\right| dt+\Psi_T(U^1,U^2)\right\}
\end{align*}
for  every $T\ge T_0\ge 1$,
where $c>0$ does not depend on $\al,\vka,T$ and
\begin{align*}
\Psi_T(U^1,U^2) = & \left|  \int_{0}^{T}
(G(\tau), Z_t(\tau) ) d\tau\right|
+  \left| \int_{0}^{T}(G(t), Z(t) ) dt\right|   +   \left|  \int_{0}^{T}dt \int_{t}^{T}
(G(\tau), Z_t(\tau) ) d\tau \right|
\end{align*}
with  
\begin{equation*}
G(t)= \cB ( U^1(t)) - \cB ( U^2(t)).
\end{equation*}
Since every point $(\al;\vka)\in \Lambda$
belongs to $\Lambda_\beta$ for some $\beta>0$.
it is sufficient to consider the case when 
$(\al;\vka)\in\Lambda_\beta$ for some $\beta$. In this case we have that
\begin{align*}
\left| ((D_0+\al\cK) Z_t,Z)\right|\le & 
 ((D_0+\al\cK) Z_t,Z_t)^{1/2} ((D_0+\al\cK) Z,Z)^{1/2} \\ 
 \le & C_{\eps,\beta}((D_0+\al\cK) Z_t,Z_t) +\eps E_Z(t)
\end{align*}
for every $\eps>0$. Thus choosing $\eps$ in an appropriate way we obtain that
\[
 TE_Z(T) +
\int_{0}^{T} E_Z(t) dt
\le  c_\beta\int_0^T ((D_0+\al\cK) Z_t,Z_t) dt+ c_0\Psi_T(U^1,U^2).
\]
Under Assumption \ref{as-sm-tr} we have from Proposition 4.13 in \cite{cl-mem}
that
for any $\eps>0$ and $T>0$ there exist $a(\eps,T)=a_\cM(\eps,T)$  
and $b(\eps,T)=b_\cM(\eps,T)$
 such that 
\begin{align}\label{Gzt-new}
\sup_{t\in[0,T]}
\left|  \int_{t}^{T}
(G(\tau), Z_t(\tau) ) d\tau \right|
\le  &\eps \int_0^{T}\left[ \|Z_t(\t)\|^2+  \|\cA^{1/2} Z(\tau)\|^2\right] d\tau 
\\
&  + a(\eps,T) \int_0^{T}\ d(\tau)\|\cA^{1/2} Z(\tau)\|^2 d\tau
+\; b(\eps,T) \sup_{\t \in [0,T]} \|\cA^{\sigma} Z(\tau)\|^2\nonumber
\end{align}
for all  $\e>0$, where $\sigma<1/2$ and
\begin{equation*}
d(t)\equiv d(t; U_1, U_2)=\| U^1_t(t)\|^2+\| U_t^2(t)\|^2.
\end{equation*}
Obviously the same relation \eqref{Gzt-new} (even with $a(\eps,T)\equiv 0$) remains true in the subcritical case \eqref{8.1.1c-sub}. Thus
\begin{equation*}
\Psi_T(U^1,U^2)
\le  \eps \int_{0}^{T} E_Z(\t) d\t 
  + a(\eps,T) \int_0^{T}\ d(\tau)\|\cA^{1/2} Z(\tau)\|^2 d\tau
+ b(\eps,T) \sup_{\t \in [0,T]} \|\cA^{\sigma} Z(\tau)\|^2
\end{equation*}
for every $\eps>0$.
From the energy relation we also obtain that
\[
\int_0^T ((D_0+\al\cK) Z_t,Z_t) dt\le
E_Z(0)-E_Z(T)+\Psi_T(U^1,U^2)
\]
Thus after appropriate choice of $\eps$
and $T$ we arrive at the relation
\begin{equation*}
E_Z(T)\le q E_Z(0)+	 a(T) \int_0^{T}\ d(\tau)\|\cA^{1/2} Z(\tau)\|^2 d\tau
+ b(T) \sup_{\t \in [0,T]} \|\cA^{\sigma} Z(\tau)\|^2,
\end{equation*}
where $q<1$ and all constants depend on $\beta$.
This inequality allows us to apply the same procedure as in \cite[p.100]{cl-mem}
to obtain \eqref{stab-est-mu}.
\end{proof}
\medskip\par 

Now we are in position to obtain a result on the finiteness
of fractal dimension of the attractors and also additional bounds for trajectories from these attractors.

\begin{theorem}\label{th:attr2}
Let Assumptions \ref{A1-sync2}
 and \ref{as:u-diss} be in force.
In addition 
we suppose that either Assumption \ref{as-sm-tr} or relation (\ref{8.1.1c-sub})
holds. Then for any  $(\al;\vka)\in \Lambda$ the following assertions hold:
\begin{enumerate}
 \item[{\bf 1.}]  The global attractor $\fA^{\al,\vka}$ of the system
  $(\cH, S_t)$ generated by \eqref{ver-problem-kappa}  has a finite fractal dimension  ${\rm dim}_f\fA$.
  \item[{\bf 2.}] This attractor $\fA^{\al,\vka}$ lies in $\bar H^1\times \bar H^{1/2}$ and for
  every full trajectory $Y=\{(U(t);U_t(t))\, :
t\in\R\}$ from the attractor in addition to
the bound in \eqref{bound-attr} we have that
\begin{equation}\label{bound-attr+}
\sup_{t\in\R} \left\{\|U_{tt}(t)\|^2+  \|\cA^{1/2} U_t(t)\|^2
+\vka \|\cK^{1/2}U_t(t)\|^2 \right\}\le R_1(\beta)
\end{equation}
for some $R_1(\beta)$ independent of  $(\alpha;\vka) \in\La_\beta$, where   $\beta>0$ can be arbitrary.
 \item[{\bf 3.}] The attractors  $\fA^{\al,\vka}$ are upper semicontinuous at every point  $(\al_*;\vka_*)\in \La$,
 i.e., 
\begin{equation}\label{u-u-upper}
\lim_{n\to \infty}\left[\sup\big\{ {\rm dist}_{\cH}(y,\fA^{\al_*,\vka_*}):y\in\fA^{\al^n,\vka^n} \big\}\right] = 0
\end{equation}
 for every sequence $\{(\al^n;\vka^n)\}\subset  \La$ such that $(\al^n;\vka^n)\to(\al_*;\vka_*)\in \La$ as $n\to\infty$. 
\end{enumerate}
\end{theorem}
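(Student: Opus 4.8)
The three assertions are the standard package of consequences of the quasi-stability estimate \eqref{stab-est-mu} of Proposition~\ref{pr:qs-U} and of the uniform bounds of Theorems~\ref{th:-u-dis}--\ref{th:-u-attr}; the only real work is to keep the uniformity in $(\al;\vka)\in\La_\beta$ through each step. For \textbf{Assertion~1} I apply \eqref{stab-est-mu} with $\cM=\fA^{\al,\vka}$ (bounded and, being strictly invariant, forward invariant). Since the resolvent of $A$ is compact and $\sigma<1/2$, the operator $\cA^\sigma$ is compact from $\bar H^{1/2}$ into $\bar H$, so $(U;V)\mapsto\|\cA^\sigma U\|$ is a compact seminorm on $\cH$ and \eqref{stab-est-mu} is exactly a quasi-stability inequality on $\fA^{\al,\vka}$; finiteness of ${\rm dim}_f\fA^{\al,\vka}$ then follows from the abstract dimension criterion for quasi-stable systems in \cite{cl-mem} (see also \cite[Ch.~3]{cl-book}, \cite{Chu-dqsds15}).

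For \textbf{Assertion~2} I fix $(\al;\vka)\in\La_\beta$ and a full trajectory $U(\cdot)$ from $\fA^{\al,\vka}$, bounded in $\cH$ by an $R$ independent of $(\al;\vka)\in\La$ by \eqref{bound-attr}. For $h>0$ the shift $U(\cdot+h)$ is again a full trajectory from $\fA^{\al,\vka}$, so $z_h:=U(\cdot+h)-U(\cdot)$ is a difference of two solutions with data on the attractor and \eqref{stab-est-mu} applies to it, with $\cM=\fA^{\al,\vka}$, on every window $[\t-T,\t]$. Put $\hat z_h:=h^{-1}z_h$ and write $E_{\hat z_h}$ for the energy $E_Z$ of Proposition~\ref{pr:qs-U} at $Z=\hat z_h$. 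The key point is that, by the interpolation inequality $\|\cA^\sigma z_h\|\le\|\cA^{1/2}z_h\|^{2\sigma}\|z_h\|^{1-2\sigma}$ together with $\|z_h(s)\|\le Rh$ and $\|\cA^{1/2}z_h(s)\|=h\|\cA^{1/2}\hat z_h(s)\|$, the powers of $h$ cancel; dividing \eqref{stab-est-mu} by $h^2$, choosing $T$ with $Ce^{-\ga T}\le\tfrac12$, and setting $M_h:=\sup_s E_{\hat z_h}(s)$ (finite for each fixed $h$ by the $\cH$-bound on $U(\cdot)$) gives
\[
M_h\ \le\ \tfrac12 M_h+C'M_h^{2\sigma},
\]
with $C'$ depending only on the constants of \eqref{stab-est-mu}, on $R$ and on $\sigma$. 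Since $2\sigma<1$, this forces $M_h\le(2C')^{1/(1-2\sigma)}$, independently of $h$ and — because those constants carry the uniformity of Theorem~\ref{th:-u-attr} and Proposition~\ref{pr:qs-U} — independently of $(\al;\vka)\in\La_\beta$. Letting $h\to0$ (so that $\hat z_h(t)\to U_t(t)$ in $\bar H$ and, along subsequences, weakly in $\bar H^{1/2}$, while $\partial_t\hat z_h\to U_{tt}$ in the sense of distributions) yields $(U_t;U_{tt})\in L_\infty(\R;\bar H^{1/2}\times\bar H)$ and the bound \eqref{bound-attr+}. Finally, solving \eqref{ver-problem-kappa} for $\cA U$ and using that $U_{tt}$, $(\cD_0+\al\cK)U_t$, $\vka\cK U$ and $\cB(U)$ all lie in $\bar H$ (since $\cD_0,\cK\colon\bar H^{1/2}\to\bar H$ and $B_i\colon H^{1/2}\to H$), we get $U(t)\in\bar H^1$, i.e.\ $\fA^{\al,\vka}\subset\bar H^1\times\bar H^{1/2}$. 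The subcritical case \eqref{8.1.1c-sub} is handled by the same scheme with $\sigma_0$ in place of $\sigma$.

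For \textbf{Assertion~3}, by Theorems~\ref{th:-u-dis}--\ref{th:-u-attr} there is a bounded $\fB\subset\cH$ which is absorbing for $S_t^{\al,\vka}$ for all $(\al;\vka)\in\La$ and contains every $\fA^{\al,\vka}$, and, fixing $\beta$ with a neighbourhood of $(\al_*;\vka_*)$ inside $\La_\beta$, Assertion~2 provides one $\rho$ with $\fA^{\al^n,\vka^n}$ bounded by $\rho$ in $\bar H^1\times\bar H^{1/2}$ for all large $n$. Since $\fA^{\al^n,\vka^n}$ is invariant, each $y\in\fA^{\al^n,\vka^n}$ equals $S_T^{\al^n,\vka^n}y_T$ with $y_T\in\fA^{\al^n,\vka^n}$, so
\[
\dist_{\cH}\big(y,\fA^{\al_*,\vka_*}\big)\ \le\ \|S_T^{\al^n,\vka^n}y_T-S_T^{\al_*,\vka_*}y_T\|_{\cH}+\dist_{\cH}\big(S_T^{\al_*,\vka_*}y_T,\fA^{\al_*,\vka_*}\big).
\]
Given $\e>0$ I first choose $T=T(\e)$ making the last term $<\e/2$ for every point of $\fB$ (as $\fA^{\al_*,\vka_*}$ attracts $\fB\ni y_T$). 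For this fixed $T$, $y_T\in\bar H^1\times\bar H^{1/2}$ is admissible for strong solutions, so $U^n:=S^{\al^n,\vka^n}_{\cdot}y_T$ and $U_*:=S^{\al_*,\vka_*}_{\cdot}y_T$ both have $\partial_t(\cdot)\in L_\infty(0,T;\bar H^{1/2})$ with norms controlled by $\rho$ and $T$; the difference $Z=U^n-U_*$ satisfies
\[
Z_{tt}+\cA Z+(\cD_0+\al^n\cK)Z_t+\vka^n\cK Z=-(\al^n-\al_*)\cK\partial_t U_*-(\vka^n-\vka_*)\cK U_*-\big(\cB(U^n)-\cB(U_*)\big),
\]
and the energy identity (multiplier $Z_t$; one drops $((\cD_0+\al^n\cK)Z_t,Z_t)\ge0$ and uses $\cK\partial_t U_*\in\bar H$), Gronwall, and $Z(0)=Z_t(0)=0$ give $\sup_{[0,T]}\|S^{\al^n,\vka^n}_t y_T-S^{\al_*,\vka_*}_t y_T\|_{\cH}^2\le C(\rho,T)(|\al^n-\al_*|+|\vka^n-\vka_*|)^2\to0$ as $n\to\infty$, uniformly over $y_T$ in the $\rho$-ball of $\bar H^1\times\bar H^{1/2}$. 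Inserting this into the previous display gives \eqref{u-u-upper} — which is just the input of the standard abstract upper-semicontinuity result (cf.\ the Appendix).

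The delicate point is \textbf{Assertion~2}: the $\bar H^1\times\bar H^{1/2}$-bound comes out only because of the exact cancellation of the powers of $h$ in the interpolation term and the smallness $2\sigma<1$, which in the critical case rests on the $C^3$-structure of the potentials (Assumption~\ref{as-sm-tr}) entering through \eqref{newsm} and Proposition~\ref{pr:qs-U}. A secondary subtlety, felt in Assertion~3, is that the coupling acts through $U_t\in\bar H$, so the continuous-dependence estimate cannot be run on $\fB$ directly and must be carried out on the already $\bar H^1\times\bar H^{1/2}$-bounded attractor data. Propagating the uniformity in $(\al;\vka)\in\La_\beta$ through all the estimates is the rest of the bookkeeping.
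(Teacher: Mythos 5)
Your proof is correct and runs on the same engine as the paper's: the uniform quasi-stability estimate \eqref{stab-est-mu} of Proposition~\ref{pr:qs-U}. The difference is one of packaging. For assertions 1 and 2 the paper simply invokes the abstract consequences of quasi-stability (Theorems~\ref{th7.9dim} and \ref{th7.9reg}, i.e.\ Theorems 3.4.18--3.4.19 of \cite{Chu-dqsds15}); your difference-quotient argument for assertion 2 --- applying \eqref{stab-est-mu} to $z_h=U(\cdot+h)-U(\cdot)$, cancelling the powers of $h$ by interpolation and closing the bound via $M_h\le \tfrac12 M_h + C'M_h^{2\sigma}$ --- is precisely the standard proof of Theorem~\ref{th7.9reg}, so you have unpacked the cited black box rather than replaced it; what this buys is that the uniformity of $R_1(\beta)$ over $\La_\beta$ is visible in the computation instead of having to be traced through the abstract statement. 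For assertion 3 the paper verifies the two hypotheses of the Kapitansky--Kostin theorem (Theorem~\ref{t7.2.5}): a common compact set coming from the uniform $\bar H^1\times\bar H^{1/2}$ bound, plus convergence of $S^{\al^n,\vka^n}_\tau x_n$ to $S^{\al_*,\vka_*}_\tau x_0$; you instead give the direct triangle-inequality proof (write $y=S_T^{\al^n,\vka^n}y_T$, use attraction of $\fB$ by $\fA^{\al_*,\vka_*}$, then continuous dependence on parameters). The two routes are equivalent, and the continuous-dependence estimate at their heart is the same.

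One wrinkle in your assertion 3: you write the equation for $Z=U^n-U_*$ with the coefficients $\al^n,\vka^n$ on the left and the forcing $-(\al^n-\al_*)\cK\partial_tU_*-(\vka^n-\vka_*)\cK U_*$ expressed through $U_*=S^{\al_*,\vka_*}_{\cdot}y_T$. Since $y_T$ belongs to $\fA^{\al^n,\vka^n}$ and not to $\fA^{\al_*,\vka_*}$, the bound \eqref{bound-attr+} does not give you $\partial_t U_*\in L_\infty(0,T;\bar H^{1/2})$ for free; your phrase ``norms controlled by $\rho$ and $T$'' presupposes a propagation-of-regularity estimate for $S^{\al_*,\vka_*}_t$ on $\bar H^1\times\bar H^{1/2}$ data, uniform over the $\rho$-ball, which is standard but is not established in the paper. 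The paper sidesteps this by keeping $\al_*,\vka_*$ on the left and placing $U^n$ --- which does live on $\fA^{\al^n,\vka^n}$ and hence enjoys \eqref{bound-attr+} --- into the forcing term. Swapping the roles of $U^n$ and $U_*$ in your display removes the issue at no cost; otherwise your argument is complete.
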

\begin{proof}
By Proposition \ref{pr:qs-U} the system 
$(\cH, S_t)$ is quasi-stable on  every bounded forward invariant set. Thus we can apply Theorems 3.4.18 and 3.4.19 from
\cite{Chu-dqsds15} to prove the statements 1 and 2 (see also Theorems \ref{th7.9dim} and \ref{th7.9reg} in the Appendix). 
\par 
To prove upper semicontinuity property we can use the methods developed in \cite{Hale-Rauge-upper,kap-kos}. In particular, we can apply a result due to \cite{kap-kos}
(see Theorem~\ref{t7.2.5} in the Appendix).
Indeed,  let 
$(\al^n;\vka^n)\to(\al_*;\vka_*)\in \La$ as $n\to\infty$.
In this case
it follows from \eqref{bound-attr+} that
\[
\|\cA U(t)\|^2+  \|\cA^{1/2} U_t(t)\|^2
 \le C^2,~~\forall\, t\in\R, 
\]
for every full trajectory $(U(t);U_t(t))$ from the attractor $\fA^{\al_n,\vka_n}$, where $C$ does not depend on $n$.
This means that the attractor $\fA^{\al_n,\vka_n}$ belongs to the set
\[
\big\{ (U_0,U_1)\, : \; \|\cA U_0\|^2+  \|\cA^{1/2} U_1\|^2
 \le C^2\big\}
 \]
which is compact in $\cH$. Thus we only need to show the property (ii) in Theorem~\ref{t7.2.5}.
\par
Let $(U_0^n;U_1^n)\in\fA^{\al_n,\vka_n}$ and $(U_0;U_1)\in\fA^{\al_*,\vka_*}$.
One can see that
\[
(Z(t);Z_t(t))=S_t^{\al_n,\vka_n}(U_0^n;U_1^n)-S_t^{\al_*,\vka_*}(U_0;U_1)\equiv (U^n(t)-U(t);U_t^n(t)-U_t(t))
\]
satisfies the equation 
\[
 Z_{tt} +\cA Z+(\cD_0+ \alpha_* {\cal K}) Z_t+ \vka_* \cK Z =F,
 \]
 where
 \[
 F = - (\alpha_n-\al_*) {\cal K} U^n_t-
 (\vka_n-\vka_*) \cK U^n  -{\cal B}(U^n)
  +{\cal B}(U).
\]
On the attractors we obviously have that
 \[
\| F\| \le c_1 (|\alpha_n-\al_*|+|
 \vka_n-\vka_*|) +c_2\|\cA^{1/2} Z\|.
\]
Therefore the standard energy type calculations gives the estimate
\[
\|S_t^{\al_n,\vka_n}Y_0^n-S_t^{\al_*,\vka_*}Y_0\|_{\cH}
\le C (|\alpha_n-\al_*|+|
 \vka_n-\vka_*|+ 
 \|Y_0^n-Y_0\|_{\cH}) e^{a t},~~ t>0, 
\]
where $Y^n_0=(U_0^n;U_1^n)$ and 
$Y_0=(U_0;U_1)$. Thus we can apply Theorem~\ref{t7.2.5}.
\end{proof}
\begin{remark}\label{re:N-eqs}
{\rm 
	The results stated in Theorems \ref{t2.wp-smp} and \ref{th:attr2} deal with a general model of the form \eqref{ver-problem} or (\ref{ver-problem-kappa}) and thus they can be also applied
	in the case of several interacting second order in time equations of the form
		\begin{equation}\label{eq-N}
    u^i_{tt} +\nu_i A u^i+  D_{0i}u^i_t +\al\sum_{j=1}^N K_{ij}u^j_t
    + \vka\sum_{j=1}^N K_{ij}u^j +B_i(u^i)=0,
 ~~i=1,\ldots,N,
 \end{equation}
under obvious changes in the set of hypotheses concerning the operators in \eqref{eq-N}.
}
\end{remark}

\subsection{Asymptotic synchronization}
Now we
apply the results above to synchronization.
We  switch on the interaction operators
$\cK$ of the standard (see, e.g., 
\cite{CRD,Hale-jdde97,naboka07,H} and the references therein) symmetric form. 
Moreover we suppose that the damping operator $\cD_0$ has a diagonal structure.
Namely we assume that
\begin{equation}\label{k-d-diag}
  \cK=\left(\begin{matrix}
 1 & -1 \\
 -1 & 1 
\end{matrix}
\right) K, ~~ \cD_0=\left(\begin{matrix}
 D_1 & 0 \\
 0 & D_2 
\end{matrix}
\right) 
\end{equation}
where $K$ is a strictly positive operator in $H$ with domain $\cD(K)\supseteq  H^{1/2}$, the operators $D_i : H^{1/2}\mapsto H$ are nonnegative.
Thus we consider the following problem
\begin{subequations}\label{wave-eq1s}
 \begin{equation}\label{wave-eq1as}
    u_{tt} +\nu_1 A u+ D_{1}u_t +\al K(u_t-v_t)
    +\vka  K(u-v) +B_1(u)=0,
 \end{equation}
 \begin{equation}\label{wave-eq1bs}
    v_{tt} +\nu_2 A v+ D_{2}v_t +\al K(v_t-u_t)
    +\vka K(v-u) +B_2(v)=0,
 \end{equation}
 with initial data
 \begin{equation}
 u(0)=u_0,~~u_t(0)=u_1,~~  v(0)=v_0,~~v_t(0)=v_1.
\end{equation}
 \end{subequations}
All theorems stated above can be applied to this situation. 

Our goal is to study 
asymptotic synchronization phenomena and we are interested in qualitative  
behavior of the system in the large coupling 
limit  $\vka\to\infty$ (and/or $\al\to\infty$).
It is clear from the bound of the attractor given in \eqref{bound-attr} that it is reasonable to assume that $u=v$ in the this limit.
Therefore we need to consider a limiting problem of the form
\begin{equation}\label{wave-eq1as-lim}
    w_{tt} +\nu A w+ Dw_t +B(w)=0,
    ~~  w(0)=w_0,~~w_t(0)=w_1.
 \end{equation}
 where
 \begin{equation*}
   \nu=\frac{\nu_1 +\nu_2}2,~~
   D=\hf (D_1+ D_{2}),~~ B(w)=\hf(B_1(w) +B_2(w)).
 \end{equation*}
Obviously the argument above can be applied to system \eqref{wave-eq1as-lim} provided the damping operator $D$ is not degenerate. 
In fact we can easily prove the following assertion.

\begin{proposition}\label{pr:lim-syst}	
Let Assumptions \ref{A1-sync2}(i,iv), 
 and \ref{as:u-diss}(ii) be in force.
Let $D : H^{1/2}\mapsto H$ be a strictly positive operator.
In addition 
assume  that either Assumption \ref{as-sm-tr} or relation (\ref{8.1.1c-sub}) is valid.
Then problem \eqref{wave-eq1as-lim} generates 
a dynamical system in the space $H^{1/2}\times H$  possessing a compact global attractor of finite fractal dimension. This attractor is a bounded set in $H^{1}\times H^{1/2}$. 
\end{proposition}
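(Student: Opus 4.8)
The plan is to recognize that \eqref{wave-eq1as-lim} is a special case of the abstract model \eqref{ver-problem} --- indeed of \eqref{ver-problem-kappa} --- with the ``second component'' trivialized, so that the entire machinery already developed (Theorems \ref{t2.wp-smp}, \ref{th:atr-setN}, \ref{th:-u-dis}, \ref{th:-u-attr}, \ref{th:attr2}) applies directly. Concretely, I would view \eqref{wave-eq1as-lim} as a single second order equation in the Hilbert space $H$ with operator $\nu A$, damping $D$, no interaction operator ($\cK=0$, equivalently $\al=\vka=0$), and nonlinearity $B(w)=\hf(B_1(w)+B_2(w))=\Pi'(w)$ with $\Pi=\hf(\Pi_1+\Pi_2)$. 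One then simply checks that each hypothesis invoked in those theorems holds for this reduced data.

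First I would verify well-posedness: Assumption \ref{A1-sync2}(i) is inherited, (ii) holds since $D$ is assumed strictly positive (hence generates a symmetric nonnegative form), (iii) is vacuous as $\cK=0$, and (iv) follows from the stated Assumptions \ref{A1-sync2}(iv) on each $B_i$ together with the elementary fact that sums of locally Lipschitz potential operators are locally Lipschitz potential operators, with potential decomposition $\Pi=\Pi_0+\Pi_1$, $\Pi_0=\hf(\Pi_{01}+\Pi_{02})$, $\Pi_1=\hf(\Pi_{11}+\Pi_{12})$ --- the splitting bound for $\Pi_1$ being the average of the two given ones. Thus Theorem \ref{t2.wp-smp} gives a dynamical system $(H^{1/2}\times H, S_t)$. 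Next, asymptotic smoothness follows from Proposition \ref{th:-a-comp} (equivalently Proposition \ref{th:-a-comp}'s proof): $D$ strictly positive is exactly Assumption \ref{as:a-comp}(i), and Assumption \ref{as:a-comp}(ii) holds because in the subcritical case \eqref{8.1.1c-sub} is assumed directly, while in the critical case Assumption \ref{as-sm-tr} (in particular \eqref{pi-2-bnd} with $\sigma<1/2$) yields continuity of $u\mapsto B_i(u)$ from $H^{1/2-\delta}$ to $H^{-\sigma}$ and continuity of the potentials on $H^{1/2-\delta}$. Dissipativity and boundedness of equilibria come from Assumption \ref{as:u-diss}(ii), which is assumed here: it gives the bound \eqref{Pi-0-es} with the same $\eta<1$, hence Theorem \ref{th:-u-dis} applies (with $\alpha=\vka=0$, and $\cD_0=D$ playing the role of the strictly positive damping, so $\bar\alpha=0$), producing an absorbing ball. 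Combined with the gradient structure (the energy inequality \eqref{8.1.4new} for the reduced system makes $\cE$ a strict Lyapunov function) this yields a compact global attractor as in Theorem \ref{th:-u-attr}.

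Finally, for finite fractal dimension and the regularity $\fA\subset H^1\times H^{1/2}$, I would invoke Theorem \ref{th:attr2} (parts 1 and 2): its hypotheses are Assumptions \ref{A1-sync2}, \ref{as:u-diss} and either \ref{as-sm-tr} or \eqref{8.1.1c-sub}, all of which are in force. One checks that $\Pi=\hf(\Pi_1+\Pi_2)$ inherits the $C^3$ property and the derivative bounds \eqref{pi-2-bnd}--\eqref{pi-3-bnd} by averaging, and that \eqref{8.1.1c-sub} for $B$ follows from the same relation for $B_1,B_2$ with $\sigma_0=\max\{\sigma_{0,1},\sigma_{0,2}\}$. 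Then Proposition \ref{pr:qs-U} gives quasi-stability on bounded forward invariant sets and Theorems 3.4.18 and 3.4.19 of \cite{Chu-dqsds15} (cited in the proof of Theorem \ref{th:attr2}) give both conclusions. In short, the proof is a verification that the limiting problem is an instance of the abstract framework; no new estimate is needed.

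I do not expect a genuine obstacle here --- the only point requiring any care is the bookkeeping that each hypothesis is stable under the operations $A\mapsto\nu A$, $\{B_1,B_2\}\mapsto\hf(B_1+B_2)$, $\{\Pi_i\}\mapsto\hf(\Pi_1+\Pi_2)$, and the passage from two coupled equations to a single one by deleting the interaction terms. The mild subtlety is noting that strict positivity of $D$ is what replaces the role of the coupling-based coercivity used in Theorem \ref{th:-u-dis}, i.e. one takes $\cD_0=D$, $\cK=0$, $\bar\alpha=0$ in that theorem; once this identification is made, all constants are trivially independent of the (absent) coupling parameters, and the assertion follows.
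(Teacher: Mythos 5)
Your proposal is correct and coincides with the paper's own (essentially omitted) argument: the paper simply asserts that "the argument above can be applied to system \eqref{wave-eq1as-lim} provided the damping operator $D$ is not degenerate," i.e.\ exactly the reduction you carry out, viewing the limit equation as an instance of the abstract framework with $\cK=0$, $\cD_0=D$ strictly positive, and the averaged nonlinearity $B=\hf(B_1+B_2)$, $\Pi=\hf(\Pi_1+\Pi_2)$ inheriting all hypotheses, so that dissipativity, quasi-stability, finite dimension and the $H^1\times H^{1/2}$ regularity follow from Theorems \ref{th:-u-dis} and \ref{th:attr2}. Your bookkeeping of how each assumption passes to the averaged data (including checking \eqref{Pi-0-es} on the diagonal $U=(w;w)$) is the only content of the proof, and it is done correctly.
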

Below
we show that the
attractors  ${\fA^{\al,\vka}}$ for problems (\ref{wave-eq1s}) in some sense converge to the
attractor  of the limiting system
\eqref{wave-eq1as-lim} when $\vka\to+\infty$.

\begin{theorem}\label{th:att-sync}
Let Assumptions \ref{A1-sync2}, 
\ref{as:a-comp}(ii) and \ref{as:u-diss} be in force with $\cK$ and $\cD_0$ of the form given in \eqref{k-d-diag}.
Then for every $(\alpha;\vka) \in\La$ 
the system 
$(\cH,S_t)$ generated by problem \eqref{wave-eq1s} 
possesses a compact global attractor $\fA^{\al,\vka}$.
For every full trajectory $Y=\{(U(t);U_t(t))\, :
t\in\R\}$ with $U(t)=(u(t);v(t))$ from the attractor 
\begin{equation}\label{bound-attr-sy}
\sup_{t\in\R} \left\{\|U_t(t)\|^2+  \|\cA^{1/2} U(t)\|^2
+\vka \|K^{1/2}(u(t)-v(t))\|^2 \right\}+\al\int_{-\infty}^{+\infty} \|U_t(\t)\|^2 d\t\le R^2
\end{equation}
for some $R$ independent of  $(\alpha;\vka) \in\La$.
\par 
In addition 
assume  that either Assumption \ref{as-sm-tr} or relation (\ref{8.1.1c-sub})
holds. Then for any  $(\al;\vka)\in \Lambda$ the following assertions hold:
\begin{enumerate}
 \item[{\bf 1.}]  The global attractor $\fA^{\al,\vka}$ of the system
  $(\cH, S_t)$ generated by \eqref{wave-eq1s}  has a finite fractal dimension  ${\rm dim}_f\fA$.
  \item[{\bf 2.}] This attractor $\fA^{\al,\vka}$ lies in $\bar H^1\times \bar H^{1/2}$ and for
  every full trajectory $Y=\{(U(t);U_t(t))\, :
t\in\R\}$ from the attractor in addition to
the bound in \eqref{bound-attr-sy} we also have that
\begin{equation}\label{bound-attr+sy}
\sup_{t\in\R} \left\{\|U_{tt}(t)\|^2+  \|\cA^{1/2} U_t(t)\|^2
+\vka \|K^{1/2}(u_t(t)-v_t(t))\|^2 \right\}\le R^2_1(\beta)
\end{equation}
for some $R_1(\beta)$ independent of  $(\alpha;\vka) \in\La_\beta$, where   $\beta>0$ can be arbitrary.
 \item[{\bf 3.}] The attractors  $\fA^{\al,\vka}$ are upper semicontinuous at every point  $(\al_*;\vka_*)\in \La$,
 i.e., \eqref{u-u-upper} is valid
 for every sequence $\{(\al^n;\vka^n)\}\subset  \La$ such that $(\al^n;\vka^n)\to(\al_*;\vka_*)\in \La$ as $n\to\infty$. 
 \item[{\bf 4.}] 
 Let $u\mapsto B_i(u)$ be weakly continuous from $H^{1/2}$ into some space $H^{-l}$, $l\ge 0$. Then
 in the limit $\vka\to\infty$ we have that
\begin{equation}\label{u-u-upper-inf}
\lim_{\vka \to \infty}\left[\sup\big\{ {\rm dist}_{\cH_\eps}(Y,\widetilde{\fA}):Y\in\fA^{\al,\vka} \big\}\right] = 0,
\end{equation}
where $\cH_\eps=\bar{H}^{1/2-\eps}\times 
 \bar{H}^{1/2-\eps}$ and 
 $\widetilde{\fA}=\big\{ (u_0;u_0;u_1;u_1): 
 (u_0;u_1)\in\fA\big\}$. Here 
$\fA$ is the global attractor
for the dynamical system  generated 
by \eqref{wave-eq1as-lim}.
Moreover, if instead of weak continuity of  $B_i$ we assume that $\nu_1=\nu_2$ and 
$K$ commutes\footnote{We can take $K=A^\sigma$ with some $0\le\sigma\le1/2$, for instance.} with $A$, then the convergence
in \eqref{u-u-upper-inf} holds in the space
$\bar{H}^{1/2}\times 
 \bar{H}^{1/2-\eps}\subset \cH$.
\end{enumerate}
\end{theorem}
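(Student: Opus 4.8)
The plan is to deduce the four itemized assertions from the general machinery already developed, and then add the genuinely new ingredient, the infinite-coupling limit in item \textbf{4}. The existence of $\fA^{\al,\vka}$ together with the bound \eqref{bound-attr-sy} follows from Theorems \ref{th:-u-dis} and \ref{th:-u-attr} once we check that the special $\cK$, $\cD_0$ in \eqref{k-d-diag} satisfy Assumptions \ref{A1-sync2}(ii,iii) and \ref{as:u-diss}(i). For the matrix $\cK$ in \eqref{k-d-diag} one has $(\cK U,U)=(K(u-v),u-v)\ge 0$, so the bilinear form is symmetric nonnegative, and $\|\cK^{1/2}U\|^2=\|K^{1/2}(u-v)\|^2$, which is precisely the quantity appearing in \eqref{bound-attr-sy}; likewise $\cD_0=\mathrm{diag}(D_1,D_2)$ is bounded $\bar H^{1/2}\to\bar H$. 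Strict positivity of $\cD_0+\bar\al\cK$ for suitable $\bar\al$ holds because $D_1,D_2$ are nonnegative and $K$ is strictly positive on the anti-diagonal, so $(\cD_0+\bar\al\cK)W,W)\ge \bar\al(K(w_1-w_2),w_1-w_2)$, which combined with any lower bound on $D_i$ gives control of $\|W\|^2$ — here one uses that the problem is posed so that at least the "sum" damping $D=\frac12(D_1+D_2)$ is nondegenerate, which is exactly the hypothesis of Proposition \ref{pr:lim-syst}. The dissipation integral bound $\al\int_{\R}\|U_t\|^2\,d\t\le R^2$ in \eqref{bound-attr-sy} comes from the energy relation \eqref{8.1.4new} applied along the full trajectory, since $((\cD_0+\al\cK)U_t,U_t)\ge \al\|\cK^{1/2}U_t\|^2=\al\|K^{1/2}(u_t-v_t)\|^2$ and additionally $((\cD_0+\al\cK)U_t,U_t)\ge c_0\al\|U_t\|^2$ once $\al\ge\bar\al$ by Assumption \ref{as:u-diss}(i) — I would check which of these two forms of the integral bound the paper actually wants and insert the corresponding elementary estimate. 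Items \textbf{1}, \textbf{2}, \textbf{3} are then immediate specializations of Theorem \ref{th:attr2}: quasi-stability (Proposition \ref{pr:qs-U}) holds on bounded forward-invariant sets, giving finite fractal dimension, the regularity $\fA^{\al,\vka}\subset\bar H^1\times\bar H^{1/2}$ with the second-order bound \eqref{bound-attr+sy}, and upper semicontinuity at interior points $(\al_*;\vka_*)\in\La$ by the Kapitanski–Kostin criterion exactly as in the proof of Theorem \ref{th:attr2}.

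The real work is item \textbf{4}. The strategy is the standard compactness/limit argument for upper semicontinuity, but with $\vka\to\infty$ rather than $\vka\to\vka_*$ finite, so the limit system lives on a different (smaller) phase space. First I would fix $\beta>0$ and restrict to $(\al;\vka)\in\La_\beta$ so that the uniform bounds \eqref{bound-attr-sy} and \eqref{bound-attr+sy} are available with constants independent of $\vka$; in particular, along any full trajectory in $\fA^{\al,\vka}$ we have $\|\cA U(t)\|^2+\|\cA^{1/2}U_t(t)\|^2\le C^2$ uniformly in $\vka$, hence the union $\bigcup_{\vka}\fA^{\al,\vka}$ is precompact in $\cH$ and even in $\bar H^{1-\eps}\times\bar H^{1/2-\eps}=\cH_\eps$. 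Arguing by contradiction: if \eqref{u-u-upper-inf} fails there are $\vka_n\to\infty$, points $Y^n=(U_0^n;U_1^n)\in\fA^{\al,\vka_n}$ and $\delta>0$ with $\mathrm{dist}_{\cH_\eps}(Y^n,\widetilde\fA)\ge\delta$. Pass to a subsequence: $Y^n\to Y_*=(u_0^*;v_0^*;u_1^*;v_1^*)$ in $\cH_\eps$. The key point is that $\vka\|K^{1/2}(u^n(t)-v^n(t))\|^2\le R^2$ and $\vka\|K^{1/2}(u_t^n(t)-v_t^n(t))\|^2\le R_1^2$ force $\|K^{1/2}(u^n_0-v^n_0)\|+\|K^{1/2}(u_1^n-v_1^n)\|\to0$, and since $K$ is strictly positive this gives $u_0^*=v_0^*$ and $u_1^*=v_1^*$; write $w_0=u_0^*$, $w_1=u_1^*$. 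One then shows $(w_0;w_1)$ lies on the attractor $\fA$ of the limit system \eqref{wave-eq1as-lim}. For this I would take, for each $n$, a full trajectory $\{Y^n(t)\}_{t\in\R}\subset\fA^{\al,\vka_n}$ through $Y^n$, extract via the uniform $\cH$-bounds and Aubin–Lions a subsequence converging in $C_{loc}(\R;\cH_\eps)$ to a limit $\{(u^*(t);v^*(t);\ldots)\}$ with $u^*(t)\equiv v^*(t)=:w(t)$ (again by the $\vka$-weighted bound), and pass to the limit in the weak form of \eqref{wave-eq1s}. Adding \eqref{wave-eq1as} and \eqref{wave-eq1bs} and dividing by two, the coupling terms $\al K(u_t-v_t)+\vka K(u-v)$ cancel in the difference and survive only through the symmetric part; in the sum they drop out entirely, leaving $w_{tt}+\nu Aw+Dw_t+\frac12(B_1(u^n)+B_2(v^n))=0$, and here the weak continuity of $B_i:H^{1/2}\to H^{-l}$ lets us pass $\frac12(B_1(u^n)+B_2(v^n))\to\frac12(B_1(w)+B_2(w))=B(w)$ in $H^{-l}$. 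Thus $w$ is a full bounded trajectory of \eqref{wave-eq1as-lim}, so $(w_0;w_1)=(w(0);w_t(0))\in\fA$, whence $Y_*=(w_0;w_0;w_1;w_1)\in\widetilde\fA$, contradicting $\mathrm{dist}_{\cH_\eps}(Y^n,\widetilde\fA)\ge\delta$.

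The main obstacle is exactly this passage to the limit in the coupling-sensitive terms and the identification of the limit as a trajectory of \eqref{wave-eq1as-lim}: one must be careful that the cross-damping term $\al K(u_t-v_t)$, whose prefactor $\al\le\beta(1+\vka)\to\infty$, does not blow up. The saving observation is that $\al K(u_t^n-v_t^n)$ is controlled in $H^{-1/2}$ (say) by $\al\|K^{1/2}(u_t^n-v_t^n)\|$, and the dissipation integral bound $\al\int\|K^{1/2}(u_t^n-v_t^n)\|^2\,d\t\le R^2$ plus $\al\le\beta(1+\vka_n)$ and $\vka_n\|K^{1/2}(u^n-v^n)\|^2\le R^2$ together yield $\al\|K^{1/2}(u_t^n-v_t^n)\|^2 = \al \cdot o(1)$ in the appropriate averaged sense, so this term tends to zero when tested against the (smooth, compactly supported in time) test functions in the weak formulation after integration by parts in $t$ — I would organize the limit on the \emph{time-integrated} weak form precisely so that $\al K u_t^n$ is moved onto $\partial_t$ of the test function, converting it to $\al K u^n$, which is $O(\al/\sqrt\vka)=O(\sqrt\vka)$; to close this cleanly one restricts further to, e.g., $\al\le\beta(1+\sqrt\vka)$ or notes that the difference $u^n-v^n\to0$ fast enough in the graph norm of $K$. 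Finally, for the improved convergence in $\bar H^{1/2}\times\bar H^{1/2-\eps}$ under $\nu_1=\nu_2$ and $[K,A]=0$: when $K$ commutes with $A$ the difference $z=u-v$ satisfies a \emph{linear} equation $z_{tt}+\nu Az+D_1z_t+2\al Kz_t+2\vka Kz = (B_2(v)-B_1(u))+\ldots$ in which the operator $\nu A+2\vka K$ is diagonal in the eigenbasis $\{e_k\}$, so one gets a direct eigenmode estimate $\|\cA^{1/2}z(t)\|^2\le C(1+\vka)^{-1}\to0$ without losing derivatives, upgrading the first component of the convergence from $H^{1/2-\eps}$ to the full $H^{1/2}$. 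I would present this last part as a short lemma on the linear difference equation.
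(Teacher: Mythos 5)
Your overall route coincides with the paper's: items 1--3 and the bounds are read off from Theorems \ref{th:-u-dis}, \ref{th:-u-attr} and \ref{th:attr2}, and item 4 is proved by contradiction, extracting a limit trajectory via Aubin--Lions from the uniform bounds \eqref{bound-attr-sy}--\eqref{bound-attr+sy}, using $\vka_n\|K^{1/2}(u^n-v^n)\|^2\le R^2$ (and the analogous bound on velocities) to force the limit onto the diagonal, and using weak continuity of $B_i$ to identify the limit as a trajectory of \eqref{wave-eq1as-lim}. Two remarks on that part. First, your claim that $\|\cA U(t)\|^2+\|\cA^{1/2}U_t(t)\|^2\le C^2$ uniformly in $\vka$ (so that $\bigcup_\vka\fA^{\al,\vka}$ is precompact in $\cH$) is not justified in the general case: recovering $\cA U$ from the equation leaves the term $\vka\cK U$, which is only $O(\sqrt{\vka})$ in $\bar H$ by \eqref{bound-attr-sy}, and this is precisely why the general convergence is stated only in $\cH_\eps=\bar H^{1/2-\eps}\times\bar H^{1/2-\eps}$ (note you also misquote $\cH_\eps$ as $\bar H^{1-\eps}\times\bar H^{1/2-\eps}$). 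Fortunately the Aubin--Lions step needs only the uniform $\bar H^{1/2}$ bounds on $U$ and $U_t$ together with $U_{tt}\in L_\infty(\R;\bar H)$, so this over-claim is harmless to the $\cH_\eps$ argument. Second, the difficulty you devote a paragraph to --- the cross-damping term with $\al\le\beta(1+\vka)\to\infty$ --- does not arise in item 4 as stated: $\al$ is fixed there, and $(\al K(u_t^n-v_t^n),\phi)=\al(K^{1/2}(u_t^n-v_t^n),K^{1/2}\phi)\to 0$ directly from the $\vka$-weighted velocity bound; no restriction such as $\al\le\beta(1+\sqrt{\vka})$ is needed (nor permitted, since it would alter the statement).

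The genuine gap is in the "moreover" part. Your eigenmode estimate from the difference equation gives $\|A^{1/2}(u^n-v^n)\|\to 0$, i.e.\ the two components approach \emph{each other} in $H^{1/2}$; it does not show that $u^n$ itself converges in $H^{1/2}$ to a point of $\fA$, which is what the Hausdorff semidistance in $\bar H^{1/2}\times\bar H^{1/2-\eps}$ requires. For that one needs precompactness of the trajectories in $\bar H^{1/2}$, hence a uniform bound in a compactly embedded space. The paper obtains it by \emph{summing} the two equations (the coupling cancels identically when $\nu_1=\nu_2$), which yields $\sup_t\|A(u^n+v^n)\|\le C$ uniformly in $\vka$, and by combining this with $\|Az+2\vka Kz\|\le C$ and the commutativity inequality $\|Az+2\vka Kz\|\ge\|Az\|$ to conclude $\sup_t(\|Au^n\|+\|Av^n\|)\le C$; compactness in $C([a,b];\bar H^{1-\eps})$ then upgrades the convergence. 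Your proposal uses only the difference equation and so is missing the sum-equation bound; adding it would close the argument.
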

\begin{proof}
All results except  the last one easily follows from Theorem~\ref{th:attr2}. 
Thus we need to establish property \eqref{u-u-upper-inf} only. As in \cite{Hale-Rauge-upper,kap-kos} we apply contradiction argument.
\par Assume that \eqref{u-u-upper-inf} is not true. Then there exist  sequences 
$\{\vka_n\to\infty\}$ and $Y^n_0\in \fA^{\al,\vka_n}$ such that
\[
{\rm dist}_{\cH_\eps}(Y^n_0,\widetilde{\fA})\ge \delta>  0, ~~ n=1,2,\ldots 
\]
Since $Y^n_0\in \fA^{\al,\vka_n}$, there exists a full trajectory 	
 $Y^n=\{(U^n(t);U^n_t(t))\, :
t\in\R\}$ from the attractor $\fA^{\al,\vka_n}$ such that $Y^n(0)=Y^n_0$.
It follows from \eqref{bound-attr-sy} and
\eqref{bound-attr+sy} and also from Aubin-Dubinsky-Lions theorem (see \cite{sim}, Corollary 4)  that the sequence $Y^n$ is compact in $C(a,b;\cH_\eps)$ for every $a<b$ and $*$-weakly compact in $L_\infty(\R; \bar{H}^{1/2}\times\bar{H}^{1/2})$.Thus there exists 
\[
\widetilde{U}(t)=(u(t);v(t))\in 
C^1(\R;\cH_\eps)\cap L_\infty(\R; \bar{H}^{1/2}\times\bar{H}^{1/2})~~\mbox{with}~~
\widetilde{U}_t\in  L_\infty(\R; \bar{H}^{1/2}\times\bar{H}^{1/2})
\]
such that along a subsequence 
\begin{equation}\label{n-conv-1}
 \forall\, a<b:~~ \sup_{t\in [a,b]} \left\{\|U^n_t(t)-\widetilde{U}_t(t) \|_{1/2-\eps}^2+  \|U^n(t)-\widetilde{U}(t) \|_{1/2-\eps}^2
 \right\}\to 0,~~ n\to\infty,
\end{equation}
and
\begin{equation*}
  (U^n(t);U^n_t(t))\to (\widetilde{U}(t);\widetilde{U}_t(t))~~\mbox{$*$-weakly in}~~
   L_\infty(\R; \bar{H}^{1/2}\times\bar{H}^{1/2}),~~ n\to\infty.
\end{equation*}
Since $K$ is strictly positive, it follows from  \eqref{bound-attr-sy} and
\eqref{bound-attr+sy}  that
\[
\sup_{t\in\R} \left\{ \|u^n_t(t)-v^n_t(t)\|^2  +
\|u^n(t)-v^n(t)\|^2 \right\}\le \frac{R^2}{\vka_n}\to 0,~~ n\to\infty.
\]
By interpolation
\begin{align*}
	\sup_{t\in\R}  \| A^{1/2-\eps}(u^n_t(t)-v^n_t(t))\|  & \le
	C \sup_{t\in\R} \left\{ \left(\|A^{1/2}u^n_t(t)\|+\|A^{1/2}v^n_t(t)\|\right)^{1-2\eps} 
	 \|u^n_t(t)-v^n_t(t)\|^{2\eps} \right\}
	 \\ &
	 \le  C \sup_{t\in\R}  
	 \|u^n_t(t)-v^n_t(t)\|^{2\eps} 
	 \to 0,~~ n\to\infty.
\end{align*}
Similarly 
\[
\sup_{t\in\R}  \| A^{1/2-\eps}(u^n(t)-v^n(t))\| 
	 \to 0,~~ n\to\infty.
	 \]
Since $u\mapsto B_i(u)$ is weakly continuous for $i=1,2$,	 
these observations allow us to make a limit transition in the variational form of equations \eqref{wave-eq1s} and conclude that $\widetilde{U}(t)=(u(t);u(t))$, where $u(t)$ is a solution to \eqref{wave-eq1as-lim}. Moreover, $(u(t);u_t(t))$ is a trajectory bounded in $H^{1/2}\times H^{1/2}$. 
Thus it belongs to the attractor $\fA$.
It is also clear from \eqref{n-conv-1} that
\[
Y_0^n=  (U^n(0);U^n_t(0))\to (\widetilde{U}(0);\widetilde{U}_t(0))\in\widetilde{\fA}
~~\mbox{in}~~ \cH_{\eps}
\]
which is impossible.
\medskip\par 
In the case when $\nu_1=\nu_2$ and $K$ commutes with $A$ taking sum of equations 
\eqref{wave-eq1as} and \eqref{wave-eq1bs}
we can find that
\begin{equation}\label{first+w}
 \sup_{t\in \R}\|A(u(t)+v(t))\|\le C(R_\beta) 
\end{equation}
for every trajectory $(u(t);v(t);u_t(t);v_t(t)$ from the attractor $\fA^{\al,\vka}$ with $(\al;\vka)\in\Lambda_\beta$.
Taking the difference of
\eqref{wave-eq1as} and \eqref{wave-eq1bs}
we obtain that
\begin{equation}\label{first+z}
 \sup_{t\in \R}\|Az(t)+2\vka Kz(t)\|\le C(R_\beta)~~\mbox{with}~~ z(t)=u(t)-v(t)
\end{equation}
for fixed $\al$. Since $A$ and $K$ commutes,
\begin{align*}
	\|Az+2\vka Kz\|^2 & =
	\|Az\|^2+ 4\vka^2\|Kz\|^2+4\vka (Az,Kz)
	\\ &
	=	\|Az\|^2+ 4\vka^2\|Kz\|^2+4\vka \|A^{1/2}K^{1/2}z\|^2\ge\|Az\|^2.   
\end{align*}
Thus \eqref{first+w} and \eqref{first+z} yield additional estimate on the attractor:
 \begin{equation*}
 \sup_{t\in \R}\left\{\|Au(t)\|+\|A v(t)\|\right\}\le C 
\end{equation*}
with the constant $C$ independent of $\vka$.
This provides us with compactness of $U^n(t)$
in the space $C([a,b];\bar{H}^{1-\eps})$ for every $\eps>0$ and makes it possible to improve the statement in \eqref{u-u-upper-inf}.
\end{proof}
\begin{remark}
{\rm
\begin{itemize}
  \item 
The result in \eqref{u-u-upper-inf} means that the attractor $\fA^{\al,\vka}$ becomes ``diagonal''  in the limit of large intensity parameter $\vka$  with fixed or even absent interaction in velocities. 
Thus the components of the system becomes   synchronized in this limit at the level of global attractors. In particular, this  implies that every solution $U(t)=(u(t);v(t))$ to \eqref{wave-eq1s}  demonstrates  the following synchronization phenomenon:
\[
\forall\, \eps>0~\exists\, \vka_*:~~
\limsup_{t\to+\infty}
\left[ \|u_t(t)-v_t(t)\|^2+ \|A^{1/2}u(t)-v(t))\|^2\right]\le \eps, ~~\forall\, \vka\ge \vka_*.
\]
\item
The same conclusion as in \eqref{u-u-upper-inf} can  be  obtain in the limit 
$(\al;\vka)\to +\infty$ inside of $\Lambda_\beta$ for some $\beta$. However as one can see from \eqref{u-u-upper-inf} large $\al$ is not necessary for asymptotic synchronization. This observation improves the result established in \cite{Hale-jdde97} for finite-dimensional systems
which requires for synchronization both parameters $\al$ and $\vka$ to be large.
 \item The possibility to obtain synchronization for fixed small $\vka$ and large $\al$ is problematic. 
 The point is that in the case $\vka=0$ under appropriate requirement on nonlinear forces $B_i$ there are possible two different stationary solutions which demonstrate absence of asymptotic synchronization. 
\end{itemize}
}	
\end{remark}

Now we consider the case of identical interacting subsystems, i.e., we assume that
\begin{equation}\label{wave-lim-data-id}
  \nu_1 =\nu_2\equiv \nu,~~
   D_1=D_{2}\equiv D,~~ B_1(w) =B_2(w)\equiv B(w).
 \end{equation}
In this case we observe asymptotic synchronization for finite values of $\vka$.

\begin{theorem}\label{th:sync-ident}
Let the hypotheses of Theorem~\ref{th:att-sync} and also relations
\eqref{wave-lim-data-id} be in force.
Assume that $\al\in [\bar\al,\al_*]$ 
for some fixed $\al_*$.
Let
\begin{equation*}
s_\vka= \inf\big\{ \nu (Aw,w) +\vka (Kw,w)\, : 
w\in H^{1/2},~\|w\|=1 \big\}   
\end{equation*}
There exists
$s_*=s_*(\bar\al,\al_*)$  and $\omega>0$ such that
under the condition\footnote{One can see that $s_\vka\ge \vka \cdot \inf\, {\rm spec}(K)$. Thus if $K$ is not degenerate, then $s_\vka\to+\infty$ as
$\vka\to+\infty$. 
}
$s_\vka\ge s_*$
the property of asymptotic exponential synchronization holds, i.e.,
\begin{equation}\label{asi-sync}
\lim_{t\to\infty}\left\{  e^{\omega t}
\left[ \|u_t(t)-v_t(t)\|^2+ \|A^{1/2}u(t)-v(t))\|^2\right]\right\}=0 
\end{equation}
 for every solution $U(t)=(u(t);v(t))$
to \eqref{wave-eq1s}.
In this case
$\fA^{\al,\vka}\equiv\widetilde{\fA}$
for all $\vka$ such that $s_\vka\ge s_*$.
\end{theorem}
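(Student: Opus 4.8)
The plan is to study the difference $z=u-v$ of the two components of a solution $U=(u;v)$ of \eqref{wave-eq1s}. Under the identical-subsystem hypotheses \eqref{wave-lim-data-id} the swap $u\leftrightarrow v$ maps the system \eqref{wave-eq1as}--\eqref{wave-eq1bs} to itself, so $\widetilde U=(v;u)$ is again a solution (with initial data $(v_0;u_0;v_1;u_1)$) and, by uniqueness, $U-\widetilde U=(z;-z)$ is a difference of two solutions of one and the same system. The set $\fB$ in \eqref{set-B-uni} is absorbing, uniformly bounded in $\cH$ with respect to $(\al;\vka)\in\La$, and symmetric under $u\leftrightarrow v$; replacing it by the forward-invariant absorbing set it generates, we may restrict attention to a bounded forward-invariant $\cM\subset\cH$ that is uniformly bounded in $(\al;\vka)$. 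Since $\al\le\al_*$ forces $(\al;\vka)\in\La_{\al_*}$, Proposition~\ref{pr:qs-U} applies (its hypotheses being among our standing assumptions) with constants uniform in $\vka$ and, rewritten in terms of $z$, yields for $E_z(t)=\|z_t\|^2+\nu\|A^{1/2}z\|^2+2\vka\|K^{1/2}z\|^2$ the bound
\[
 E_z(t)\le C\,E_z(0)\,e^{-\ga t}+C\max_{\t\in[0,t]}\|A^{\s}z(\t)\|^2,\qquad t>0,
\]
with some $\s<1/2$ and with $C,\ga>0$ depending on $\al_*$, $\bar\al$ and $\cM$ but not on $\vka$.

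First I would make the remainder term absorbable by using $s_\vka$. From the definition of $s_\vka$ we get $s_\vka\|z\|^2\le\nu\|A^{1/2}z\|^2+\vka\|K^{1/2}z\|^2\le E_z$, while also $\nu\|A^{1/2}z\|^2\le E_z$; interpolation then gives $\|A^{\s}z\|^2\le\|A^{1/2}z\|^{4\s}\|z\|^{2-4\s}\le\nu^{-2\s}s_\vka^{-(1-2\s)}E_z=:q(s_\vka)E_z$, and $q(s_\vka)\to0$ as $s_\vka\to\infty$ because $\s<1/2$. Substituting, the quasi-stability estimate becomes $E_z(t)\le C E_z(0)e^{-\ga t}+C q(s_\vka)\max_{[0,t]}E_z$. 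Choosing $s_*=s_*(\bar\al,\al_*)$ so large that $C q(s_*)$ is small, and restarting this estimate at the grid points $nT$ with $T$ fixed so that $Ce^{-\ga T}$ is small, the classical bootstrap upgrades it to $E_z(t)\le C' E_z(0)e^{-\w t}$ for some $\w>0$ independent of $\vka$, valid whenever $s_\vka\ge s_*$. Since $E_z\ge\min\{1,\nu\}(\|z_t\|^2+\|A^{1/2}z\|^2)$ and $E_z(0)$ is bounded on $\cM$, we obtain $\|u_t-v_t\|^2+\|A^{1/2}(u-v)\|^2\le C''e^{-\w t}$; as every solution enters $\cM$ in finite time and the initial interval does not affect the limit, \eqref{asi-sync} follows (with the $\w$ there taken slightly below the decay rate just obtained).

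For the identity $\fA^{\al,\vka}\equiv\widetilde{\fA}$ I argue by two inclusions. If $(u_0;u_1)\in\fA$ and $w(\cdot)$ is a full bounded trajectory of \eqref{wave-eq1as-lim} through it, then $(w;w)$ solves \eqref{wave-eq1s} --- the interaction terms vanish on the diagonal and $\nu_1=\nu_2$, $D_1=D_2$, $B_1=B_2$ --- so $(w;w;w_t;w_t)$ is a full bounded trajectory of $(\cH,S_t)$ and hence lies in $\fA^{\al,\vka}$; thus $\widetilde{\fA}\subseteq\fA^{\al,\vka}$. Conversely, along any full trajectory $Y=\{(U(t);U_t(t)):t\in\R\}$ in $\fA^{\al,\vka}$ the decay estimate $E_z(t)\le C'E_z(t_0)e^{-\w(t-t_0)}$ holds for all $t_0\le t$, whereas $E_z(t_0)$ is bounded uniformly in $t_0$ by \eqref{bound-attr-sy}; letting $t_0\to-\infty$ forces $z\equiv0$, i.e.\ $u\equiv v$. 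Then $u$ is a full bounded trajectory of \eqref{wave-eq1as-lim}, so $(u;u_t)\in\fA$ and $Y\subset\widetilde{\fA}$, giving $\fA^{\al,\vka}\subseteq\widetilde{\fA}$.

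The main obstacle is the estimate of the second step: one must see that the ``compact'' remainder $\|A^{\s}z\|^2$ furnished by Proposition~\ref{pr:qs-U} is, after interpolation between the $H^{1/2}$-bound (uniform, coming from $\nu$) and the $H$-bound (with the large factor $s_\vka$, coming from the very definition of $s_\vka$), dominated by $q(s_\vka)E_z$ with $q(s_\vka)\to0$; the hypothesis $s_\vka\ge s_*$ is exactly what makes this term absorbable, after which the standard ``quasi-stability $\Rightarrow$ exponential stability'' argument closes the proof. The one piece of bookkeeping not to lose sight of is the uniformity of all constants in $\vka$, available precisely because the admissible pairs $(\al;\vka)$ lie in $\La_{\al_*}$, within the scope of Proposition~\ref{pr:qs-U}.
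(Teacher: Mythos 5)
Your proof is correct, but it takes a genuinely different route from the paper's. The paper works directly with the difference equation for $w=u-v$, pairs the nonlinear difference $B(u)-B(v)$ with $w_t$ via the decomposition \eqref{newsm}, and runs a Lyapunov computation with $\Psi=\widetilde E+\eta(w,w_t)+\mu(Kw,w)$: the largeness of $s_\vka$ is used to absorb the lower-order terms $\|w\|^2$ and $|Q|\le C_R\|A^{\sigma}w\|^2$ into the coercive part, and the remaining critical term $C_R(\|u_t\|+\|v_t\|)\|A^{1/2}w\|^2$ is handled through the finiteness of the dissipation integral $\int_0^\infty(\|u_t\|^2+\|v_t\|^2)\,dt$. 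You instead observe that for identical subsystems the swap $(u;v)\mapsto(v;u)$ produces a second solution, so $(z;-z)$ is a difference of two trajectories and Proposition~\ref{pr:qs-U} applies verbatim, with constants uniform in $\vka$ because $\al\le\al_*$ keeps $(\al;\vka)$ in $\La_{\al_*}$; the compact remainder $\|A^{\sigma}z\|^2$ is then absorbed by interpolating between $\nu\|A^{1/2}z\|^2\le E_z$ and $\|z\|^2\le s_\vka^{-1}E_z$, which is the same use of $s_\vka$ relocated to a different step, and the standard restart-on-a-grid bootstrap converts the quasi-stability inequality into exponential decay. Your route is more economical --- it reuses the quasi-stability estimate as a black box instead of redoing the multiplier computation --- and your two-inclusion argument for $\fA^{\al,\vka}=\widetilde{\fA}$ supplies details the paper leaves to the reader (the paper simply asserts that the identity follows from \eqref{asi-sync}). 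What the paper's direct computation buys is an explicit exhibition of the decay rate $\omega$ and of where the threshold $s_*$ enters, without appealing to the swap symmetry as a separate structural fact. Both arguments rest on the same mechanism: $s_\vka\ge s_*$ makes every subcritical norm of $z$ a small multiple of the energy $E_z$, and your bookkeeping of the uniformity in $\vka$ (via $\La_{\al_*}$ and the swap-invariant absorbing set) is exactly what is needed to close the argument.
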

\begin{proof} 
In the case considered $w=u-v$ satisfies the equation
\begin{equation*}
    w_{tt} +\nu A w+ Dw_t  +2\al K w_t +2\vka K w +B(u)-B(v)=0,
    ~~  w(0)=w_0,~~w_t(0)=w_1.
 \end{equation*}
 where $w_0=u_0-v_0$ and $w_1=u_1-v_1$.
\par 
We consider the case of the critical nonlinearity, the subcritical case   
is much simpler.
In the former case we 
use the representation \eqref{newsm} with $z=w$. Since $B_1=B_2=B$, below we omit the subscript $i$.
It follows from \eqref{pi-2-bnd} and \eqref{pi-3-bnd} that the variables $Q$ and $R$ defined 
in \eqref{q-new} and \eqref{r-new} admits the estimates
\[
|Q(t)|\le C_R\|A^\sigma w(t)\|^2
~~\mbox{and}~~
|R(t)|\le C_R(\|u_t(t)\|+\|v_t(t)\|) 
\|A^{1/2} w(t)\|^2
\]
under the condition
\begin{equation}\label{dis-in}
\|A^{1/2} u(t)\|^2+\|A^{1/2} v(t)\|^2\le R^2  
\end{equation}
with $R$ and thus $C_R$ independent of $\al$ and $\vka$.

We consider a Lyapunov type function of the form
\[
\Psi(t)= \widetilde{E}(t)+\Phi(t),
\]
where
\[
\widetilde{E}(t)=\hf \left(\|w_t(t)\|^2+ \nu  \| A^{1/2} w(t)\|^2\right)+ Q(t)\
\]
and 
\[
\Phi(t)=\eta (w,w_t)+\mu (Kw,w),
\]
where $\eta$  is a positive constant which will be chosen later and $\mu=\vka+\eta \al$.
By uniform dissipativity of the system $(\cH,S_t^{\al,\vka})$ we can assume that that \eqref{dis-in} holds with the same $R$ as in \eqref{set-B-uni} for all $t\ge t_*$.

One can see that there exists $0<\eta_0<1$ and $\beta_i>0$ independent of  $(\alpha;\vka)$
such that 
\begin{equation*}
 \beta_0\big[ E_0(t)+\vka \| K^{1/2} w(t)\|^2- c_R \|w(t)\|^2 \big] \le \Psi\le 
 \beta_2\big[ E_0(t)+ c_R \|w(t)\|^2  \big]+\mu  \| K^{1/2} w(t)\|^2,
\end{equation*}
for all $t\ge t_*$ and $0<\eta<\eta_0$,
 where 
\[
E_0(t)=\hf \left(\|w_t(t)\|^2+  \nu\|A^{1/2} w(t)\|^2\right).
\]
Now on strong solutions we calculate the derivative
\begin{align*}
\frac{d\Psi}{dt}= & -((D+\alpha K)w_t,w_t) -R(t) \\
& +\eta\big[ \|w_t\|^2 -(D w_t,w)-\nu (A w,w) -2\vka (K w,w)-(B(u)-B(v),w)\big]
\end{align*}
Since $D$ is bounded from $H^{1/2}$ into 
$H$, we obtain that
\[ 
|(Dw_t,w)|\le \eps\|A^{1/2} w\|^2 +C\eps^{-1}\|w_t||^2,~~~\forall  \eps>0.
\]
Thus
there exist $b_i>0$ independent
of $(\alpha,\vka)$ such that
\begin{align*}
\frac{d\Psi}{dt}\le  & -\big[((D+\alpha K)w_t,w_t) 
-b_1\eta\|w_t\|^2\big]	+ C_R(\|u_t\|+\|v_t\|)\|A^{1/2}w\|^2 \\
& -b_2\eta\big[E_0(t) +\vka \|K^{1/2}w\|^2 \big] +\eta c_R \|w(t)\|^2,
\end{align*}
Fixing $\al$ and taking $s_\vka$ large enough
we obtain that
  \[
\frac{d\Psi}{dt}+ \omega \Psi(t) - C_R(\|u_t\|^2+\|v_t\|^2)\|A^{1/2}w\|^2
	\le 0,~~ t\ge t_*,
\] for some $\omega,C>0$. Using finiteness of the dissipativity integrals:
\[
\int_0^\infty(\|u_t\|^2+\|v_t\|^2)dt<\infty,
\]
we obtain \eqref{asi-sync}.
The equality  $\fA^{\al,\vka}\equiv\widetilde{\fA}$ for the attractors follows from
\eqref{asi-sync}.
\end{proof}
If $B$ is critical, but not satisfies the structural hypothesis in Assumption~\ref{as-sm-tr} we can still guarantee asymptotic exponential synchronization.
However in this case we need additional condition that the damping parameter $\al$ is large enough and $K$ is not    
degenerate. If $B$ are globally Lipschitz we can even
avoid the requirement of dissipativity of the system. 

\begin{remark}\label{re:N-eqs2}
{\rm 
	The results similar to 
	 Theorems \ref{th:att-sync} and \ref{th:sync-ident} can be also established for
	 $N$ coupled second order in time equations of the form
	 \begin{align*}
&   u^1_{tt} +\nu_1 A u^1+  D_{1}u^1_t +\al  K(u^1_t-u^2_t)
    + \vka  K(u^1-u^2) +B_1(u^1)=0,
 \\
&   u^j_{tt} +\nu_j A u^j+  D_{i}u^j_t -\al K(u^{j+1}_t -2u^j_t+u^{j-1}_t)
    - \vka K(u^{j+1} -2u^j+u^{j-1})+ B_j(u^j)=0,
    \\ & \hskip 12 cm
 ~~j=2,\ldots,N-1, \\
 &   u^N_{tt} +\nu_N A u^N+  D_{N}u^N_t +\al  K(u^N_t-u^{N-1}_t)
    + \vka  K(u^N-u^{N-1}) +B_N(u^N)=0,
\end{align*}	
This system can be reduced to \eqref{ver-problem-kappa} with 	
\[
\cD_0=\left(\begin{matrix}
D_{1} &  0  & 0 & \ldots & 0\\
0  &  D_{2} & 0& \ldots & 0\\
0  &  0 & D_{3}& \ldots & 0\\
\vdots & \vdots & \vdots& \ddots & \vdots\\
0  &  0& 0       &\ldots & D_{N}
\end{matrix}\right),~~~
\cK = \left(\begin{matrix}
1 &  -1  & 0 & \ldots & 0\\
-1  &  2 & -1& \ldots & 0\\
0  &  -1 & 2& \ldots & 0\\
\vdots & \vdots & \vdots& \ddots & \vdots\\
0  &  0& 0       &\ldots & 1
\end{matrix}\right) K.
\]
Thus the general results of this section can be applied with the same  hypotheses concerning the operators $A$, $D_{i}$, $K$ and $B_i$. 
We note that the energy for for this $N$ coupled
model has the form
\[
\cE=
\sum_{j=1}^N\left[\frac12\left( \|u^j_t\|^2 + \nu_j\|A^{1/2} u^j\|^2\right)+\Pi_j(u^j)
\right] +\frac{\vka}{2} \sum_{j=1}^{N-1}
\|K^{1/2}(u^{j+1}-u^j)\|^2.
\]
In the ODE case ($\nu_i\equiv 0$, $K=id$) synchronization for this model  was considered in \cite{Hale-jdde97} with assumption that {\em both} $\alpha$ and $\vka$ become  large or even tend to infinity. Our approach allows us observe asymptotic synchronization 
for fixed $\al$ and in the limit  $\vka\to+\infty$ (for identical subsystems it is sufficient to assume that  $\vka$ is large enough). The limiting (synchronized) regime is described by problem \eqref{wave-eq1as-lim}
 with
 \begin{equation*}
   \nu=\frac{1}{N} \sum_{j=1}^N\nu_j,~~
   D=\frac{1}{N} \sum_{j=1}^N D_j,~~ B(w)=\frac{1}{N} \sum_{j=1}^N(B_j(w).
 \end{equation*}
In the case of a plate with the Berger nonlinearity the same result was obtained in \cite{naboka08} with $D_{0j}=d_j \cdot id$, $\al=0$ and
$K=id$.
}
\end{remark}

\subsection{On synchronization by means of  finite-dimensional coupling}

One can see from the argument given in Theorem \ref{th:sync-ident} that
 asymptotic synchronization 
can be achieved even with finite-dimensional coupling operator. Indeed, 
the only condition we need  is 
\[
\nu (Aw,w)+\vka (Kw,w)\ge c \|w\|^2,~~\forall\, w\in H^{1/2},
\]
with appropriate $c$ depending on the size of an absorbing ball. As it was already mentioned  
if $K$ is a strictly positive operator, then the requirement $s_\vka\ge s_*$ 
holds for large intensity parameter $\vka$.
However it is not necessary to assume non-degeneracy of the operator $K$ to guarantee large $s_\vka$. For instance,
if $K=P_N$ is the orthoprojector onto Span${}\{ e_k:\; k=1,2,\ldots, N \}$, then
\begin{align*}
\nu (Aw,w) + & \vka (Kw,w)\ge \sum_{k=1}^N(\nu\lambda_k+\vka)|(w,e_k)|^2+\nu\sum_{k=N+1}^\infty \lambda_k |(w,e_k)|^2 \\
\ge & 	(\nu\lambda_1+\vka)\sum_{k=1}^N|(w,e_k)|^2+ \nu\lambda_{N+1}\sum_{k=N+1}^\infty |(w,e_k)|^2 \ge \min\{\nu\lambda_1+\vka,\nu\lambda_{N+1}\}\|w\|^2.
\end{align*}
Thus if $\vka\ge \nu(\lambda_{N+1}-\lambda_1)$,
then we can guarantee large 
$s_\vka$ by an appropriate choice of $N$.
\par 
This fact admits some generalization which
based on an assumption  that $K$ is
 a ``good'' approximation (in some sense) for a strictly positive operator.
\par 
Let $V\subset H$ be  separable Hilbert spaces and
$K, L$ be  linear operators from $V$ into $H$.
Assume that $L$ is a strictly positive  on $V$, i.e.,
there exists $a_L>0$ such that 
\[
(Lu,u)\ge a_L\|u\|_H^2,~~u\in V.
\] 
We introduce  the value
$$
e(L,K)\equiv e^H_V (L,K) =\sup\{\Vert L u-Ku\Vert_H\; :\; \Vert u\Vert_V\le 1\}.
$$
In the case when $L=id$ is the identity operator this value is known (see \cite{Au72}) as
  the {\em global approximation error}
in $H$ arising in the approximation of
elements $v\in V$ by elements $Kv$. 
\par 
Now we take $V=H^{1/2}$. It follows from the definition that  
\begin{equation*}
  \Vert L u-Ku\Vert_H\le e(L,K)\|A^{1/2}u\|,~~ u\in H^{1/2}.
\end{equation*}
In this case we obtain
\begin{align*}
\nu (Aw,w)+\vka (Kw,w)= &\nu\|A^{1/2}w\|^2 +\vka(Lw,w) + \vka((K-L)w,w)
 \\
\ge &\nu \|A^{1/2}w\|^2 +a_L \vka\|w\|^2 - \vka e(L,K)\|w\|\|A^{1/2}w\|
\\
\ge & \left(a_L \vka - \frac{\vka^2 e^2(L,K)}{4\nu} \right)\|w\|^2
\end{align*}
Thus according Theorem~\ref{th:sync-ident} under the condition
\[
a_L \vka - \frac{\vka^2 e^2(L,K)}{4\nu} \ge s_*
\]
we have asymptotic exponential synchronization. The latter inequality is valid, when $e^2(L,K)\le 2\nu  a_L\vka^{-1}$ and
$\vka\ge 2s_* a_L^{-1}$ for instance. So $e(L,K)$ should be small and $\vka$ large.
We note that in the case $L=id$ and $K=P_N$
we have $e(L,K)=\lambda_{N+1}^{-1/2}$ and $a_L=1$. So the inequalities above can be realized for some choice $\vka$ and $N$.

Now we describe another situation
where synchronization can be achieved 
 with finite-dimen\-sional coupling.
For this we use  interpolation operators related with a finite family $\cL$
of linear continuous functionals $\{l_j: j=1,\ldots,N\}$ on $H^{1/2}$.
Following \cite{Chu99} 
(see also \cite{cl-mem,cl-book})
we introduce the 
notion of {\it completeness defect} of a set ${\cal L}$ of linear functionals
on $H^{1/2}$ with respect to $H$ by the formula
\begin{equation}\label{7.8.21}
\epsilon_{\cL}\equiv \epsilon_{\cal L}(H^{1/2},H) =\sup\big\{ \parallel w \parallel_{H}  :
w\in H^{1/2},\, l (w)=0,\, l\in {\cal L},\, \parallel w \parallel_{1/2} \le 1
\big\}\;.
\end{equation}
It is clear
that $\epsilon_{{\cal L}_1} \ge\epsilon_{{\cal L}_2}$
provided Span${\cal L}_1\subset{\rm Span}{\cal L}_2$ and
$\epsilon_{\cal L}=0$ if and only if the class of functionals
${\cal L}$ is complete in $H^{1/2}$, i.e,  the property $l(w)=0$ for all
$l\in {\cal L} $ implies $w=0$.
For further properties of the completeness
defect we refer to \cite[Chapter 5]{Chu99}.
\par
We define  the class $\cR_{\cL}$ of so-called interpolation operators which are related with
the set of functionals  given.
We say that a operator  $K$ belongs to
${\cal R}_{\cal L}$ if it has 
 the form
\begin{equation}\label{R-op}
Kv=\sum_{j=1}^N l_j(v) \psi_j, ~~~ \forall\, v\in H^{1/2},
\end{equation}
where $\{\psi_j\}$ is an arbitrary finite set of elements from $H^{1/2}$.
An operator $K\in \cR_\cL$ is called {\em Lagrange } interpolation operator, if it has form
\eqref{R-op} with $\{\psi_j\}$ such that $l_k(\psi_j)=\delta_{kj}$. In
the case of Lagrange operators we have that $l_j(u-Ku)=0$ and thus 
\eqref{7.8.21} yields 
\[
\|u-Ku\|\le \epsilon_\cL\|v\|_{1/2},~~ v\in H^{1/2}.
\]
Hence the smallness of the completeness defect is important requirement from point of view of synchronization.
We refer to \cite[Chapter 5]{Chu99}
for properties of this characteristic 
and for description of sets of functionals with small $\epsilon_\cL$.
The simplest example is modes.
 In this case ${\cal L}=\{ l_j(u)=(u,e_j)\; :\; j=1,2,\ldots,N\}$,
where $ \{ e_k \}$ are eigenfunctions of the operator $A$. The 
 operator $P_\cL$  given by
\begin{equation*}
P_\cL v=\sum_{j=1}^N (e_j,v) e_j, ~~~ \forall\, v\in H^{1/2},
\end{equation*}
is the Lagrange  interpolation operator. 
Moreover,
$\epsilon_{\cal
L}=e(id, P_\cL)=\la_{N+1}^{-1/2}$.
Thus the completeness defect (and the global approximation error)
can be made small after an appropriate choice of $N$. This shows that the situation with $K=P_N$ can be included in the general framework.
\par  
Unfortunately in the general case an interpolation operator of the form
\eqref{R-op} is not symmetric and positive. Therefore  formally we cannot
apply the result on uniform dissipativity  with $K$ of the form \eqref{R-op}.
The situation requires a separate consideration and possibly another set of hypotheses concerning the model. We plan to provide with the corresponding analysis in future.
Here we give only one particular result in this direction.
\par
We consider the following version of the equations \eqref{wave-eq1s}
 \begin{subequations}\label{eq-lagr-s}
 \begin{equation}\label{eq-lagr-s1}
    u_{tt} +\nu A u+ D u_t 
    +\vka  K(u-v) +B(u)=0,
 \end{equation}
 \begin{equation}\label{eq-lagr-s2}
    v_{tt} +\nu A v+ Dv_t 
    +\vka K(v-u) +B(v)=0,
 \end{equation}
 \begin{equation}
 u(0)=u_0,~~u_t(0)=u_1,~~  v(0)=v_0,~~v_t(0)=v_1.
\end{equation}
 \end{subequations}
\begin{theorem}\label{th:lagrange}
Assume that $A$ satisfies Assumption \ref{A1-sync2}(i),
$B(u)$ is globally Lipschitz and  subcritical, i.e.,
\begin{equation*}
\exists\, \sigma_0<1/2:~~\|B(u_1) -B(u_2)|~\leq~L_B \|\cA^{\sigma_0}( u_1-u_2)\|,\quad
\forall  u_i \in H^{1/2}.
\end{equation*}	
Let $D : H^{1/2}\mapsto H$ be strictly positive operator and $K\equiv K_\cL$
be a Lagrange interpolation operator for
some family $\cL$ of
linear continuous functionals $\{l_j: j=1,\ldots,N\}$ on $H^{1/2}$.
Then for every initial data $U_0=(u_0;v_0)\in \bar{H}^{1/2}$ and
$U_1=(u_1;v_1)\in \bar{H}$ problem \eqref{eq-lagr-s} has unique generalized solution $U(t)=(u(t);v(t))$ and there exist $\vka_*>0$ and $\epsilon_*(\vka)$ 
such that for every $\vka\ge \vka_*$ 
and $\epsilon_\cL\le \epsilon_*(\vka)$
the solution $U(t)=(u(t);v(t))$ is asymptotically synchronized, i.e.,
relation \eqref{asi-sync} holds with some $\omega>0$. 
\end{theorem}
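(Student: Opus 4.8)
The strategy is to mimic the Lyapunov-function argument of Theorem~\ref{th:sync-ident}, but adapted to the fact that the coupling operator $K_\cL$ is neither symmetric nor nonnegative, so the standard energy machinery (and hence the previously established dissipativity / attractor results) does not directly apply. First I would establish well-posedness: since $B$ is globally Lipschitz and subcritical and $D$ is strictly positive, equations \eqref{eq-lagr-s1}--\eqref{eq-lagr-s2} are a bounded perturbation of the linear damped system $U_{tt}+\cA U+\cD_0 U_t=0$ in $\cH=\bar H^{1/2}\times\bar H$; a Galerkin approximation together with the global Lipschitz bound on $\cB$ and on the $(u,v)\mapsto \vka K_\cL(u-v)$ term (which maps $\bar H^{1/2}$ into $\bar H$ continuously) gives a unique generalized solution on every $[0,T]$, with continuous dependence — this is routine and I would just cite the linear semigroup theory plus a contraction/Gronwall argument.

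The core of the proof is the synchronization estimate. Put $w=u-v$; subtracting \eqref{eq-lagr-s2} from \eqref{eq-lagr-s1} gives
\[
w_{tt}+\nu A w+Dw_t+2\vka K_\cL w + \big(B(u)-B(v)\big)=0 .
\]
Here $2\vka K_\cL w$ is the problematic term: $K_\cL$ is a Lagrange interpolation operator, so $l_j(w-K_\cL w)=0$ for all $j$, whence by \eqref{7.8.21}, $\|w-K_\cL w\|\le \epsilon_\cL\|A^{1/2}w\|$, i.e. $K_\cL w = w + r$ with $\|r\|\le \epsilon_\cL\|A^{1/2}w\|$. Thus $2\vka(K_\cL w,w_t)=2\vka\,\dt(\hf\|w\|^2\cdot\text{?})$ — more precisely I would write $2\vka(K_\cL w,w)=2\vka\|w\|^2+2\vka(r,w)$ and $2\vka(K_\cL w,w_t)=\vka\dt\|w\|^2+2\vka(r,w_t)$, treating the $r$-contributions as lower-order errors controlled by $\epsilon_\cL$. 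I would then introduce the Lyapunov functional
\[
\Psi(t)=\hf\big(\|w_t\|^2+\nu\|A^{1/2}w\|^2\big)+\vka\|w\|^2+\eta(w,w_t),
\]
with $\eta>0$ small and fixed, and compute $\dt\Psi$. The good terms are $-(Dw_t,w_t)$ (strict positivity of $D$ gives $-c_0\|w_t\|^2$), $-\nu\eta\|A^{1/2}w\|^2$, and $-2\vka\eta\|w\|^2$; the Lipschitz term $B(u)-B(v)$ is handled via subcriticality, $\|B(u)-B(v)\|\le L_B\|A^{\sigma_0}w\|$ with $\sigma_0<1/2$, and interpolation $\|A^{\sigma_0}w\|^2\le \epsilon\|A^{1/2}w\|^2 + C_\epsilon\|w\|^2$; the error terms from $r$ are bounded by $\vka\epsilon_\cL\|A^{1/2}w\|(\|w\|+\|w_t\|)$. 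Collecting: for $\eta$ small enough (depending only on $\nu,c_0,L_B$) and then $\vka$ large enough and $\epsilon_\cL$ small enough relative to $\vka$, one gets $\dt\Psi+\omega\Psi\le 0$ for some $\omega>0$, and since $\Psi$ is equivalent to $\|w_t\|^2+\|A^{1/2}w\|^2+\vka\|w\|^2$ (for $\eta$ small), Gronwall yields \eqref{asi-sync}. The equality $\fA^{\al,\vka}\equiv\widetilde\fA$, if one wants it, follows as before from exponential decay of $w$ together with existence of the attractor for the limiting single equation \eqref{wave-eq1as-lim} via Proposition~\ref{pr:lim-syst}.

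The main obstacle — and the reason the theorem is stated as a special case rather than in the generality of Theorem~\ref{th:sync-ident} — is precisely the lack of symmetry and positivity of $K_\cL$: one cannot run the uniform-dissipativity argument of Theorem~\ref{th:-u-dis}, so the quadratic form $(\vka K_\cL w,w)$ is controlled only through the perturbative identity $K_\cL = I + (\text{error of size }\epsilon_\cL)$, and the error term carries a factor $\vka$. Hence the two smallness/largeness conditions must be balanced: $\vka$ large enough that $2\vka$ dominates the subcritical lower-order term $C_\epsilon\|w\|^2$ and supplies the gap $s_*$-type constant, but $\epsilon_\cL$ small enough (depending on $\vka$) that $\vka\epsilon_\cL$ stays below the threshold set by $c_0$ and $\nu$. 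Making the dependence $\epsilon_*(\vka)$ explicit and checking that the window $\{(\vka,\epsilon_\cL): \vka\ge\vka_*,\ \epsilon_\cL\le\epsilon_*(\vka)\}$ is nonempty (which it is, since $\epsilon_\cL\to 0$ as $N\to\infty$ by \cite[Chapter 5]{Chu99}) is the only genuinely delicate bookkeeping; everything else is a direct transcription of the Theorem~\ref{th:sync-ident} computation in the simpler, globally Lipschitz, subcritical setting where the quantities $Q(t)$ and $R(t)$ of \eqref{q-new}--\eqref{r-new} are not even needed.
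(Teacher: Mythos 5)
Your proposal follows essentially the same route as the paper's proof: well-posedness by citing the globally Lipschitz theory, the decomposition $2\vka K_\cL w = 2\vka w - 2\vka(id-K_\cL)w$ with the error controlled by $\|(id-K_\cL)w\|\le\epsilon_\cL\|A^{1/2}w\|$ via the Lagrange property and \eqref{7.8.21}, and the same Lyapunov functional $\hf(\|w_t\|^2+\nu\|A^{1/2}w\|^2)+\vka\|w\|^2+\eta(w,w_t)$ (the paper's displayed $\vka\|w_t\|^2$ is evidently a typo for $\vka\|w\|^2$), balanced so that $\vka$ is large while $\vka\epsilon_\cL$ stays small. The argument is correct and matches the paper's.
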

\begin{proof}
This is globally Lipschitz case and therefore the well-posedness easily follows from \cite[Theorem 1.5]{cl-mem}.
\par 
Following the same idea as in Theorem~\ref{th:sync-ident}  	
we find that the difference
 $w=u-v$ satisfies the equation
\begin{equation*}
    w_{tt} +\nu A w+ Dw_t  +2\vka  w +G_{\vka,\cL}(u,v)=0,
    ~~  w(0)=w_0,~~w_t(0)=w_1.
 \end{equation*}
 where $w_0=u_0-v_0$, $w_1=u_1-v_1$ and
 \[
 G_{\vka,\cL}(u,v)=-2\vka (id-K_\cL)(u-v)  +B(u)-B(v).
 \]
We have the obvious estimate 
 \[
\| G_{\vka,\cL}(u,v)\|\le  2\vka\epsilon_\cL\|A^{1/2}w\|  +L_B \|\cA^{\sigma_0}w\|\le  2(\vka\epsilon_\cL+\delta)\|A^{1/2}w\|  +C_{\delta} \|w\| 
 \]
for all $\delta>0$. This implies 
 \[
| (G_{\vka,\cL}(u,v),w)|\le   (\vka\epsilon_\cL+\delta)^2\|A^{1/2}w\|^2  +C_{\delta} \|w\|^2,~~\forall\, \delta>0, 
 \]
and
\[
| (G_{\vka,\cL}(u,v),w_t)|\le \mu\|w_t\|^2  + \frac{2(\vka\epsilon_\cL+\delta)^2}{\mu}\|A^{1/2}w\|^2  +C_{\delta,\mu } \|w\|^2,~~\forall\,\mu,\delta>0. 
 \]
Now as in the proof of Theorem \ref{th:sync-ident} we can use Lyapunov type functional
\[
\Psi(w,w_t)= \hf \left(\|w_t\|^2+ \nu  \| A^{1/2} w\|^2\right)+
\vka\|w_t\|^2 +
\eta (w,w_t),
\]
with $\vka$ large, $\vka\epsilon_\cL$ small 
and with an appropriate choice of $\mu$ and $\delta$.
\end{proof}

\section{Applications}\label{Sect-appl}
In this section we shortly outline possible application of the general results presented above.
\subsection{{Plate models}}
We first   consider  the plate models
with coupling via elastic (Hooke type) links.
Namely, we consider the following PDEs
\begin{subequations}\label{eq-plate}
	\begin{equation}\label{eq-plate1}
u_{tt} +\ga_1u_{t}+ \Delta^2u+ \vka K (u-v)+
\f_1(u)=f_1~~\mbox{in}~ \Omega\subset\R^2,
\end{equation}
\begin{equation}\label{eq-plate2}
v_{tt} +\ga_2v_{t}+ \Delta^2v+ \vka K (v-u)+
\f_2(u)=f_2~~\mbox{in}~ \Omega\subset\R^2,
\end{equation} with  the hinged boundary 
conditions
\begin{equation}
 u=\Delta u=0,~~v=\Delta v=0~~\mbox{on}~ \partial\Omega, 
\end{equation} 
\end{subequations}
 where $K$ is an operator which will be specified later. 
The nonlinear force term $\f_i(u)$
 can take one of the following forms:
\begin{itemize}
  \item {\sl Kirchhoff model}:
$\f\in {\rm Lip_{loc}}(\R)$ fulfills the  condition \begin{equation*}
    \underset{|s|\to\8}{\liminf}{\frac{\f(s)}{s}}>-\l_1^2,
\end{equation*}  where $\l_1$ is the first eigenvalue of the
Laplacian with the Dirichlet boundary conditions.

  \item {\sl Von Karman model:} $\f(u)=[u, v(u)+F_0]$, where $F_0$ is a given function
  in $H^4(\Omega)$ and
the bracket $[u,v]$  is given by
\begin{equation*}
[u,v] = \partial ^{2}_{x_{1}} u\cdot \partial ^{2}_{x_{2}} v +
\partial ^{2}_{x_{2}} u\cdot \partial ^{2}_{x_{1}} v -
2\cdot \partial ^{2}_{x_{1}x_{2}} u\cdot \partial ^{2}_{x_{1}x_{2}}
v .
\end{equation*} 
The Airy stress function $v(u) $ solves the following  elliptic
problem
\begin{eqnarray*}
\Delta^2 v(u)+[u,u] =0 ~~{\rm in}~~  \Omega,\quad \frac{\partial v(u)}{\partial
n} = v(u) =0 ~~{\rm on}~~  \partial\Omega.
\end{eqnarray*}
Von  Karman equations are well known in nonlinear elasticity and
 describes nonlinear oscillations of a
plate accounting for  large displacements, see \cite{cl-book,Lio69}  and the
references therein.
  \item {\sl Berger Model:} In this case the feedback force has the form
  $$
  \f(u)=- \left[ \kappa \int_\Omega |\nabla u|^2 dx-\G\right] \Delta u,
$$
where $\kappa>0$ and $\G\in\R$ are parameters, for some details and  references see
\cite[Chap.4]{Chu99}.
\end{itemize}
In all these cases we have that $H=L^2(\Omega)$ and
\[
A u= \Delta^2 u,\quad u\in \sD(\cA)=
\left\{ u\in H^4(\Omega) : u=\Delta u=0 ~~{\rm on}~~\partial\Omega\right\}.
\]
It is clear that $A$ satisfies Assumption~\ref{A1-sync2} (i)
and $\sD(A^{1/2})=H^2(\Omega)\cap H^1_0(\Omega)$.
\par

The nonlinear force $\f$ in the Kirchhoff model is subcritical 
with respect to the energy space (it is locally Lipschitz from $H^{1+\delta}(\Omega)$ into $L_2(\Omega)$ for every $\delta >0$). In contrast,
the von Karman and Berger nonlinearities are {\em critical} (they are 
 locally Lipschitz mappings from $H^2(\Omega)$ into $L_2(\Omega)$ which are not compact
on $H^2(\Omega)$). 
Other requirements concerning  the corresponding forcing terms $B_i$ can be verified  in the standard way. For details
  we refer to \cite{kolbasin}  for the Kirchhoff forces, to \cite[Chapter 6]{cl-mem} for the von Karman model and to \cite[Chapter 7]{cl-mem}
for the case of Berger plates.
\par
The interaction operator $K$ can be  of the following forms $K=id$ and $K=-\Delta$,
or even $K=A^\sigma$ with $0<\sigma<1/2$.
In the purely Kirchhoff case with globally Lipschitz functions $\f_i$
we can also use the Lagrange interpolation operator with respect to \textit{nodes}, i.e., with respect to the family of functionals
\[
l_j(w)= w(x_j),~~\mbox{where}~~x_j\in \Omega,~~j=1,\ldots,N,
\]
with appropriate\footnote{  
For details concerning smallness of the 
corresponding completeness defect we refer to \cite[Chapter 5]{Chu99}.} 
choice of nodes $x_j$.
This means that two Kirchhoff plates can be synchronized by finite number of point links.

We note that in the case when both $\f_1$ and $\f_2$ are Berger nonlinearities (possibly with different parameters)
the results on synchronization with $K=id$  can be found in \cite{naboka07}, see also \cite{naboka08,naboka09}.

We also mention some other plate models  for which  the abstract results
established can be applied:
\begin{itemize}
  \item First of all we can consider the plates with other (self-adjoint) boundary conditions
  such as clamped and free and also combinations of them
  (for a discussion of these boundary conditions in the case of 
  nonlinear plate models we refer to \cite{cl-book}).
  \item The plate models with rotational inertia
  can be included in the framework presented. Instead of \eqref{eq-plate}
coupled dynamics  in these models  can be described by
equations of the form
\begin{subequations}\label{eq-rot}
	\begin{equation}
(1-\ga\Delta)u_{tt} +\mu (1-\ga\Delta)u_{t}+ \Delta^2u+ \vka K (u-v)+
\f_1(u)=f_1~~\mbox{in}~ \Omega\subset\R^2,
\end{equation}
\begin{equation}
(1-\ga\Delta)v_{tt} +\mu(1-\ga\Delta)v_{t}+ \Delta^2v+ \vka K (v-u)+
\f_2(u)=f_2~~\mbox{in}~ \Omega\subset\R^2.
\end{equation}
 \end{subequations}
Here $\gamma$ is positive.
It is convenient  to rewrite  \eqref{eq-rot}
 as  equations in $H=H^1_0(\Omega)$ (equipped with
 the inner product $(u,v)_H\equiv((1-\ga\Delta)u,v)_{L^2(\Omega)}$) in the form \eqref{wave-eq1s}
 with the operator $A$ generated by the form
$a(u,v)=(\Delta u,\Delta v)_{L^2(\Omega)}$ on $H^2_0(\Omega)$.
\item  We can also include into consideration
the plates with Kirchhoff--Boussinesq forces of the form
\[
\f(u)=-{\rm div}\!\left\{ |\nabla u|^2 \nabla u\right\}+ a |u|^q u
\]
with $a,q\ge 0$, see \cite[Chapter 7]{cl-mem} and also \cite{cl-kb,cl-kb2} concerning models with this force.
\end{itemize}

\subsection{Coupled wave equations}\label{sec:cpl-wave}

 In the case of coupled  wave equations   \eqref{3d-wave}
the standard (critical) hypotheses concerning the source term $\f\in C^2(\R)$
 in the 3D case have the form:
\begin{equation*}
  \underset{|s|\to\infty}{\liminf}\left\{\f_i(s)s^{-1}\right\}>-\l_1,\quad
|\f_i''(s)|\le C(1+|s|),\; s\in\R,
\end{equation*}
 where $\l_1$ is the first eigenvalue of the minus
Laplacian with the Dirichlet boundary conditions, see, e.g., \cite[Chapter 5]{cl-mem} for details.
\medskip\par  

We can also consider several versions of damped sine-Gordon equations.
These are used to model the dynamics of Josephson junctions driven by a
source of current (see, e.g., \cite{temam}  for  comments and  references).
For instance, we can consider the system\footnote{For simplicity we discuss a symmetric coupling of identical systems only.}
\begin{subequations}\label{3d-wave-sin-G}
\begin{align}
& u_{tt}+\gamma u_t -\Delta u+ \beta u+ \vka (u-v) +\la \sin u=f(x), \\
& v_{tt}+\gamma v_t -\Delta v+ \beta v+ \vka (v-u) +\la \sin v=f(x),
\end{align}
in a smooth domain $\Omega\subset \R^d$ and equipped with the Neumann boundary conditions 
\begin{equation}
\frac{\partial u}{\partial n}\Big|_{\partial\Omega}=0,~~ 
\frac{\partial v}{\partial n}\Big|_{\partial\Omega}=0.
\end{equation} 
\end{subequations}
 It is easy to see that in the case 
 of the Dirichlet boundary conditions
 the general theory developed above can be applied.
 The same is true when $\beta>0$.
  In the case $\beta=0$ the situation is more complicated because the corresponding operator $A$
  is $-\Delta$ on the domain
  \[
  \cD(A)=\left\{ u\in H^2(\Omega)\, :~
  \frac{\partial  u}{\partial n}=0~~
 \mbox{on}~~ \partial\Omega\right\}
  \]
 and thus   becomes degenerate.
 So we concentrate on the case $\beta=0$.
 \par 
 It is convenient to introduce new variables
 \begin{equation}\label{wz-vari}
 w=\frac{u-v}{2}~~\mbox{and}~~z=\frac{u+v}{2}.
\end{equation}
 In these variables problem \eqref{3d-wave-sin-G} with $\beta=0$ can be written in the form
 \begin{subequations}\label{3d-wave-sG-wz}
\begin{align}
& w_{tt}+\gamma w_t -\Delta w+ 2\vka w +\la \sin w \cos z=0, \label{3d-sG-w-eq} \\
& z_{tt}+\gamma z_t -\Delta z +\la \cos w \sin z=f(x),
\\ &
\frac{\partial w}{\partial n}\Big|_{\partial\Omega}=0,~~ 
\frac{\partial z}{\partial n}\Big|_{\partial\Omega}=0.
\end{align} 
\end{subequations}
The main linear part in \eqref{3d-sG-w-eq}
is not degenerate when $\vka>0$. Therefore the same calculations as in the proof of Theorem~\ref{th:sync-ident} shows that
there exists $\vka_*$ such that
\[ \exists\, \eta>0\, :~~
 \|w(t)\|_{H^1(\Omega)}^2+\|w_t(t)\|^2\le
C_B e^{-\eta t},
~~t >0,
\] 
when $\vka\ge \vka_*$ 
for all initial data from a bounded set $B$
in $H^1(\Omega)\times L_2(\Omega)$.
This means that every trajectory is asymptotically synchronized.
Moreover, it follows from the reduction principle (see \cite[Section 2.3.3]{Chu-dqsds15})
 that the limiting (synchronized) dynamics 
is determined by the single equation
\begin{equation*}
  z_{tt}+\gamma z_t -\Delta z +\la  \sin z=f(x), ~~ 
\frac{\partial z}{\partial n}\Big|_{\partial\Omega}=0.
\end{equation*}
The long-time dynamics of this equation is described in \cite[Chapter 4]{temam}.
 We also refer to \cite{leonov1,leonov2} for the studies  of synchronization phenomena for \eqref{3d-wave-sin-G} in the homogeneous (ODE) case.
\medskip\par 
Another coupled sine-Gordon systems of an interest is the following one
\begin{subequations}\label{3d-wave-sin-G2}
\begin{align}
& u_{tt}+\gamma u_t -\Delta u  +\la \sin (u-v)=f_1(x), \\
& v_{tt}+\gamma v_t -\Delta v +\la \sin(v-u) =f_2(x),
\\ &
u|_{\partial\Omega}=0,~~ 
 v|_{\partial\Omega}=0.
\end{align}
\end{subequations}
Formally this model is out of the scope the theory developed above.
However, using the ideas presented we can answer some questions concerning synchronized regimes.\footnote{In the ODE case 
the synchronization phenomena in \eqref{3d-wave-sin-G2} was studied 
in \cite{leonov1,leonov2}.
}
 \par 
In variables $w$ and $z$ given by   
\eqref{wz-vari} we have equations
\begin{subequations}\label{3d-wave-sG-wz2}
\begin{align}
& w_{tt}+\gamma w_t -\Delta w +\la \sin 2w =g(x), \label{w-sin} \\
& z_{tt}+\gamma z_t -\Delta z =h(x),
\\ &
w|_{\partial\Omega}=0,~~ 
 z|_{\partial\Omega}=0,
\end{align} 
\end{subequations}
where $g(x)=(f_1(x)-f_2(x))/2$ and
$h(x)=(f_1(x)+f_2(x))/2$.
One can see that
\[
\|z_t(t)\|^2+\|\nabla (z(t)-z_*)\|^2
\le C\left(\|z_t(0)\|^2+\|\nabla (z(0)-z_*)\|^2\right) e^{-\omega t},~~ t\ge 0,
\]
for some $C,\omega>0$, where $z_*\in H^1(\Omega)$ solves the Dirichlet problem
\[
-\Delta z =h(x),
~~ 
 z|_{\partial\Omega}=0.
\]
Problem \eqref{w-sin} equipped with
the Dirichlet boundary conditions possesses a compact global  
attractor $\fA$ in the space
$W=H_0^1(\Omega)\times L_2(\Omega)$, see \cite[Chapter 4]{temam}.
Hence
\[
\left(\begin{matrix}
  u(t) -z_* \\
  u_t(t) \\
  v(t)-z_* \\
  v_t(t)
\end{matrix}\right)=
\left(\begin{matrix}
  z(t) -z_* \\
  z_t(t) \\
  z(t)-z_* \\
  z_t(t)
\end{matrix}\right)+
\left(\begin{matrix}
  w(t) \\
  w_t(t) \\
  -w(t) \\
  -w_t(t)
\end{matrix}\right) \longrightarrow
\left\{ \left(\begin{matrix}
  \psi  \\
  -\psi  \\
\end{matrix}\right)\, : \; \psi\in\fA\right\}~~\mbox{as}~~ t\to+\infty
\]
in the space $W\times W$.
Thus we observe some kind of shifted asymptotic anti-phase
synchronization.

%


\appendix

\section{Appendix: Some facts from the theory of dynamical systems}

In this section we collect several  definitions  and general theorems
from the theory of dissipative dynamical systems.
For more complete presentation we refer to one of the monographs
\cite{BV92,Chu99,Ha88,temam}.
\par
Recall that a
{\em dynamical system} (see, e.g., \cite{Chu99,Ha88,temam}) is a pair
$\big(X, S_t\big)$ of a complete metric space $X$ and a family of
continuous mappings $S_t:X\mapsto X,\ t\ge 0$,
 satisfying the semigroup property: (i)
 $ S_{t+\t} = S_t\circ S_\t$ for any $t,\t\ge 0$, and (ii)
$S_0$ is the identity operator.
It is also assumed that $t\mapsto S_tx$ is continuous mapping for every $x\in X$.
\par
A system $\big( X, S_t\big)$ is said to be
{\em dissipative} 
if it possesses a bounded absorbing set $B$.
A closed set
$B\subset X$  is said to be {\em  absorbing} for $S_t$
if for any bounded set $D\subset X$  there exists
$t_0(D)$ such that
$S_t D\subset B$ for all $t\ge t_0(D)$.
If  the phase space $X$ 
is a Banach space, then the radius of a ball containing  an absorbing set
is called  a {\em radius of dissipativity} of the system.
\par
A system $\big( X, S_t\big)$ is said to be \textit{asymptotically smooth}
if for any  closed bounded set $B\subset X$
such that  $S_tB\subset B$ for all $t\ge 0$
there exists a compact set $\cK=\cK(B)$ which  uniformly attracts $B$, i.e.,
$$
\lim_{t\to+\8} \sup\{{\rm dist}(S_ty,\cK):\ y\in B\} = 0.
$$
\par
A system is called \textit{gradient} if it possesses a
\textit{strict Lyapunov function}. The latter  is defined as a
continuous functional $\Phi(x)$ on $X$ satisfying
(i) $\Phi\big(S_tx\big) \leq \Phi(x)$  for all $t\geq 0$ and $x\in X$, and (ii)
 if $\Phi(x)=\Phi(S_tx)$ for all $t>0$, then $x$
is a stationary point of $S_t$, i.e.,
$S_t x=x$ for all $t\ge 0$. 
\par
A \textit{global attractor}
 of a dynamical system  $\big( X, S_t\big)$ is a bounded closed  set $\fA\subset X$
 which is  invariant (i.e., $S_t\fA=\fA$)
 and  uniformly  attracts
all other bounded  sets:
$$
\lim_{t\to\8} \sup\{{\rm dist}(S_ty,\fA):\ y\in B\} = 0
\quad\mbox{for any bounded  set $B$ in $X$.}
$$
It is known \cite{BV92,Chu99,Ha88,temam} that the global attractor consists of all bounded full trajectories. 
We recall that a curve $\{y(t)\, :\, t\in\R\}\subset X$ is said to be a \textit{full trajectory} if $S_t y(\t)=y(t+\t)$
for all $\t\in \R$ and $t\ge 0$.

The standard criterion (see, e.g., \cite{Chu99,Ha88,temam}) 
 for the existence of a global attractor 
is the following assertion.

\begin{theorem}\label{th:main-attractor-a}
Let $(X,S_t)$ be  a dissipative    asymptotically smooth
dynamical system on  a complete metric space $X$.
Then $S_t$ possesses a unique  compact global attractor
$\fA$ such that
\[
\fA=\omega(B_0)=
\bigcap_{t>0}\overline{\bigcup_{\tau\ge t} S_\tau B_0}
\]
for every bounded absorbing set $B_0$.
\end{theorem}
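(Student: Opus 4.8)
The plan is to exhibit the attractor explicitly as the $\omega$-limit set of an absorbing set and then verify, in turn, that it is nonempty, compact, strictly invariant and uniformly attracting, closing with a one-line uniqueness argument. First I would replace the given absorbing set by a forward invariant one: since $B_0$ absorbs itself there is $t_0$ with $S_tB_0\subset B_0$ for $t\ge t_0$, and then $\widetilde B:=\overline{\bigcup_{t\ge t_0}S_tB_0}$ is bounded (it is contained in $\overline{B_0}$), still absorbing, and satisfies $S_s\widetilde B\subset\widetilde B$ for all $s\ge 0$ by continuity of $S_s$ (using $S_s(\overline A)\subset\overline{S_s A}$). Put $\widetilde B_t:=\overline{\bigcup_{\tau\ge t}S_\tau B_0}$, so that $\omega(B_0)=\bigcap_{t>0}\widetilde B_t$.

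The heart of the proof is a compactness claim: $\widetilde B_t$ is compact for all large $t$. Here I would invoke asymptotic smoothness applied to $\widetilde B$, which yields a compact set $\cK$ attracting $\widetilde B$ uniformly, i.e. $\dist(S_\tau y,\cK)\to0$ uniformly in $y\in\widetilde B$. Since $S_{t_0}B_0\subset\widetilde B$, the tail $\bigcup_{\tau\ge t}S_\tau B_0$ is contained in $\bigcup_{\sigma\ge t-t_0}S_\sigma\widetilde B$, hence in the $\varepsilon$-neighbourhood of $\cK$ once $t$ is large; a subset of an $\varepsilon$-neighbourhood of a compact set is totally bounded, so $\widetilde B_t$ is closed and totally bounded in the complete space $X$, hence compact. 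Consequently $\fA:=\omega(B_0)=\bigcap_{t>0}\widetilde B_t$ is a decreasing intersection of nonempty compact sets, so it is nonempty, compact, bounded and closed. Invariance $S_t\fA=\fA$ I would then obtain from the standard characterization that $y\in\fA$ iff $y=\lim_n S_{t_n}x_n$ for some $t_n\to\8$ and $x_n\in B_0$: the inclusion $S_t\fA\subset\fA$ is immediate from continuity of $S_t$, and $\fA\subset S_t\fA$ follows because the points $S_{t_n-t}x_n$ eventually lie in a compact $\widetilde B_s$, so a subsequence converges to a point of $\fA$ mapped by $S_t$ onto $y$.

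For the uniform attraction property I would argue by contradiction: if some bounded $B$ is not attracted by $\fA$ there are $\varepsilon_0>0$, $t_n\to\8$ and $x_n\in B$ with $\dist(S_{t_n}x_n,\fA)\ge\varepsilon_0$; choosing $T$ so that $S_TB\subset B_0$ and writing $S_{t_n}x_n=S_{t_n-T}(S_Tx_n)$ with $S_Tx_n\in B_0$, these points lie in a compact $\widetilde B_s$ for $n$ large, so a subsequence converges, and its limit lies in $\omega(B_0)=\fA$ by definition --- a contradiction. Uniqueness is then standard: any other global attractor $\fA'$ is bounded and invariant, hence attracted by $\fA$, which forces $\fA'=S_t\fA'\subset\fA$, and the symmetric argument gives the reverse inclusion. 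Finally, the representation $\fA=\omega(B_0)$ for an \emph{arbitrary} bounded absorbing set follows because $\fA$ both attracts and is absorbed by any such set, so its $\omega$-limit set coincides with $\fA$ by the same bookkeeping. The only genuinely delicate step is the compactness claim for $\widetilde B_t$; everything else is routine $\omega$-limit-set manipulation, and the full details can be quoted from \cite{BV92,Chu99,Ha88,temam}.
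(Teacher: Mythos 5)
The paper itself offers no proof of this statement --- it is quoted as the standard existence criterion with a pointer to \cite{BV92,Chu99,Ha88,temam} --- so there is no in-paper argument to compare against; your architecture is the classical one. However, your central compactness claim contains a genuine error. You assert that ``a subset of an $\varepsilon$-neighbourhood of a compact set is totally bounded'' and conclude that $\widetilde B_t=\overline{\bigcup_{\tau\ge t}S_\tau B_0}$ is compact for all large finite $t$. Both statements are false in general: the unit ball of $\ell^2$ lies in the $1$-neighbourhood of the compact set $\{0\}$ but is not totally bounded (a subset of an $\varepsilon$-neighbourhood of a compact set admits finite $\delta$-nets only for $\delta>\varepsilon$, not for all $\delta>0$). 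Correspondingly, for an asymptotically smooth but not eventually compact semigroup --- precisely the situation of the wave and plate models this paper is about --- the tail $\bigcup_{\tau\ge t}S_\tau B_0$ is not precompact for any finite $t$; only the intersection over all $t$ is. Since you then use compactness of $\widetilde B_t$ to get nonemptiness of $\fA$ (nested nonempty compacts), to prove $\fA\subset S_t\fA$, and to run the attraction argument (extracting convergent subsequences of $S_{t_n}x_n$ from ``a compact $\widetilde B_s$''), the error propagates through all three main steps; in particular nonemptiness of $\fA$ is simply not established.

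The repair is standard and stays within your framework: asymptotic smoothness applied to your forward invariant $\widetilde B$ gives a compact $\cK$ with $\sup_{y\in\widetilde B}\dist(S_\tau y,\cK)\to0$; hence for any $t_n\to\infty$ and $x_n\in B_0$ one has $\dist(S_{t_n}x_n,\cK)\to 0$, and by compactness of $\cK$ a subsequence of $S_{t_n}x_n$ converges to a point of $\cK$. This yields directly (i) $\omega(B_0)\neq\emptyset$, (ii) $\omega(B_0)\subset\cK$, so $\omega(B_0)$ is a closed subset of a compact set and therefore compact, and (iii) every convergent-subsequence extraction needed in your invariance and attraction arguments --- all without ever claiming that the tails $\widetilde B_t$ themselves are compact. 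With that substitution the remainder of your proof (uniqueness and independence of the choice of $B_0$) goes through as written.
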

For gradient systems it is also useful
 the following criterion of the existence
of a global attractor  (see, e.g., \cite[Theorem 4.6]{Raugel}).
\begin{theorem}\label{Theorem 2.2.}
Let $\big( X, S_t \big)$ be an asymptotically smooth gradient system
 which has the property that for any bounded set
 $B\subset X$ there exists $\t>0$ such that
 $\ga_\t(B)\equiv\bigcup_{t\geq\t}S_tB$ is bounded. If the set $\cN$ of
stationary points  is bounded, then $\big( X, S_t \big)$ has a
compact global attractor $\fA$ which coincides with the unstable set $\mathbb{M}_+(\cN)$
emanating from $\cN$, i.e., $\fA=\mathbb{M}_+(\cN)$.
\end{theorem}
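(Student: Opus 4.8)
The plan is to reduce the assertion to the standard existence criterion recorded in Theorem~\ref{th:main-attractor-a}, whose hypotheses are dissipativity and asymptotic smoothness. Asymptotic smoothness is assumed, so the heart of the matter is to manufacture a bounded absorbing set out of the gradient structure together with the boundedness of the equilibrium set $\cN$; the identification $\fA=\mathbb{M}_+(\cN)$ is then a soft consequence of the Lyapunov function. First I would record that for every bounded $B\subset X$ the tail $\ga_\t(B)$ is, by hypothesis, bounded, and it is positively invariant since $S_s\ga_\t(B)\subset\ga_\t(B)$ for all $s\ge 0$. Applying asymptotic smoothness to the closed bounded positively invariant set $\overline{\ga_\t(B)}$ yields a compact set that uniformly attracts it, from which the standard $\omega$-limit machinery gives that $\omega(B)$ is nonempty, compact, invariant, and attracts $B$.

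The key step is the LaSalle-type collapse of these limit sets onto $\cN$. Let $\Phi$ be the strict Lyapunov function. Fixing $x\in X$, the orbit is bounded (its tail is bounded by hypothesis and the initial piece $\{S_tx:0\le t\le\t\}$ is the continuous image of a compact interval), so $\omega(x)\neq\emptyset$, and $t\mapsto\Phi(S_tx)$ is non-increasing with a limit $c\in[-\8,\Phi(x)]$. Evaluating $\Phi$ along a sequence $S_{t_n}x\to y\in\omega(x)$ and using that $\Phi$ is a finite continuous functional forces $c$ finite and $\Phi\equiv c$ on $\omega(x)$. Since $\omega(x)$ is invariant, each $y\in\omega(x)$ has $\Phi(S_ty)=c=\Phi(y)$ for all $t>0$, so the strict-Lyapunov property makes $y$ stationary; hence $\omega(x)\subset\cN$. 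As $\cN$ is bounded, every $\omega(x)$ lies in one fixed bounded set, so any bounded neighbourhood $B_0$ of $\cN$ is absorbing: for bounded $B$ we have $\omega(B)\subset\cN\subset\operatorname{int}B_0$ and $\omega(B)$ attracts $B$, whence $S_tB\subset B_0$ for large $t$. This yields \emph{dissipativity}, and Theorem~\ref{th:main-attractor-a} supplies a compact global attractor $\fA=\omega(B_0)$.

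It remains to identify $\fA$ with the unstable set $\mathbb{M}_+(\cN)$, using that $\fA$ consists precisely of the bounded full trajectories. Along any such trajectory $y(\cdot)$ the map $\Phi(y(t))$ is non-increasing and, since $y$ is bounded and $\Phi$ continuous, bounded; thus it has finite limits as $t\to\pm\8$. The $\alpha$- and $\omega$-limit sets of $y$ are invariant sets on which $\Phi$ is constant, so by the strict-Lyapunov property they consist of stationary points; in particular $y(t)$ approaches $\cN$ as $t\to-\8$, placing $y$ in $\mathbb{M}_+(\cN)$, so $\fA\subset\mathbb{M}_+(\cN)$. Conversely, a point of $\mathbb{M}_+(\cN)$ lies on a full trajectory that is backward-asymptotic to the bounded set $\cN$ and forward-bounded (again by monotonicity of $\Phi$), hence is a bounded full trajectory and belongs to $\fA$.

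I expect the main obstacle to be the dissipativity step: turning the abstract gradient hypothesis into a genuine bounded absorbing set. The delicate points are the finiteness and constancy of $\Phi$ on the only asymptotically compact limit sets via the invariance argument, and the passage from the pointwise absorption implied by $\omega(x)\subset\cN$ to a set-wise absorbing ball for bounded $B$ --- which is exactly where the uniform boundedness of $\cN$ and the attraction property of $\omega(B)$ must be combined.
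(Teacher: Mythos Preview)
The paper does not supply its own proof of this theorem: it is quoted in the Appendix as a known criterion, with a reference to \cite[Theorem~4.6]{Raugel} (and the companion Remark~\ref{re:th2.2} points to \cite[Corollary~2.29]{cl-mem}). So there is no in-paper argument to compare against; your proposal is an attempt to reconstruct a proof the authors deliberately omitted.

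That said, your outline has one genuine gap. The step ``for bounded $B$ we have $\omega(B)\subset\cN$'' is not correct in general: $\omega(B)$ is compact and invariant, but it may contain entire heteroclinic connections between equilibria, not just equilibria themselves. Your LaSalle argument shows only that for each $y\in\omega(B)$ the $\alpha$- and $\omega$-limit sets of the full trajectory through $y$ lie in $\cN$; this yields $\omega(B)\subset\mathbb{M}_+(\cN)$, not $\omega(B)\subset\cN$. The dissipativity argument can be repaired along the lines you sketch, but the missing ingredient is to show that $\mathbb{M}_+(\cN)$ itself is bounded. This is where the hypothesis on $\ga_\tau$ is actually used: take a bounded neighbourhood $U$ of $\cN$ and the associated bounded tail $\ga_\tau(U)$; any full trajectory $y(\cdot)$ with $y(t)\to\cN$ as $t\to-\infty$ satisfies $y(-T)\in U$ for all large $T$, hence $y(0)=S_T y(-T)\in\ga_\tau(U)$ once $T\ge\tau$. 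Thus $\mathbb{M}_+(\cN)\subset\ga_\tau(U)$ is bounded, every $\omega(B)$ sits inside this fixed bounded set, and a bounded neighbourhood of $\ga_\tau(U)$ is absorbing. With this correction your reduction to Theorem~\ref{th:main-attractor-a} and the subsequent identification $\fA=\mathbb{M}_+(\cN)$ go through as you wrote them.
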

We recall (see, e.g., \cite{BV92})
 that the \textit{unstable set} $\mathbb{M}_+(\cN)$ emanating from $\cN$ is a subset
 of $X$ such that for each $z\in\mathbb{M}_+(\cN)$
there exists a full trajectory $\{y(t): t\in\R\}$ satisfying
$u(0) = z$ and ${\rm dist}(y(t),\cN) \to  0$ as $t\to -\8$.

\begin{remark}\label{re:th2.2}
{\rm
We note that the hypothesis that $\ga_\t(B)$ is bounded
in Theorem~\ref{Theorem 2.2.} can be changed in the following
(additional) requirements concerning the Lyapunov function $\Phi(x)$: (i)
$\Phi(x)$ is bounded from above on any bounded set, and (ii)
the set $\Phi_R=\{x\in X: \Phi(x)\le R\}$ is bounded for
every $R$ (see, e.g., \cite[Corollary 2.29]{cl-mem}).
}
\end{remark}

An important feature of a global attractor is its (fractal) dimension.
We recall that
the {\em fractal dimension} $\dim^X_f M$ of a compact set $M$ in a complete
metric space $X$ is defined (see, e.g., \cite{temam}) as
\[
\dim^X_fM=\limsup_{\eps\to 0}\frac{\ln N(M,\eps)}{\ln (1/\eps)}\;,
\]
where $N(M,\eps)$ is the minimal number of closed sets in $X$ of
diameter $2\eps$ needed to cover the set~$M$.
\par
Now we state several facts related with 
asymptotically quasi-stable systems
(for details we refer to the recent monograph 
\cite{Chu-dqsds15} and the references therein).

Let $X$ and $Y$ be reflexive Banach spaces; $X$ is compactly embedded
in $Y$. We endow the space $H=X\times Y$ with the norm
\[
\|y \|^2_H=\|u_0\|^2_X+\|u_1\|^2_Y,\quad y=(u_0;u_1).
\]
Assume that
$(H,S_t)$ is a dynamical system   with the evolution operator
of the form
\begin{equation}\label{7.9.1}
S_ty=(u(t); u_t(t)), \quad y=(u_0;u_1)\in H,
\end{equation}
where the function $u(t)$ possesses the property
\[
u\in C(\R_+, X)\cap C^1(\R_+, Y).
\]
A dynamical system  $(H,S_t)$ with an evolution operator of the form \eqref{7.9.1} is said to be \textit{asymptotically  quasi-stable} on a set $\cB\subset H$ if
there exist a compact seminorm $\mu_X(\cdot)$ on the space $X$ and
 nonnegative scalar functions $a(t)$, $b(t)$, and $c(t)$ on $\R_+$
such that
(i) $a(t)$ and  $c(t)$ are locally bounded on $[0,\infty)$,
(ii)~$b(t)\in L_1(\R_+)$ possesses
the property $\lim_{t\to\infty}b(t)=0$,
and  (iii) for every $y_1,y_2\in \cB$ and $t>0$ the following relations
\[
\| S_ty_1-S_ty_2\|^2_H\le  a(t)\cdot \| y_1-y_2\|^2_H
\]
and
\begin{equation}\label{8.4.2mc}
\| S_ty_1-S_ty_2\|^2_H\le  b(t)\cdot \| y_1-y_2\|^2_H+
c(t)\cdot \sup_{0\le s\le t}\left[ \mu_X(u^1(s)-u^2(s))\right]^2
\end{equation}
hold. Here we denote
$S_ty_i=(u^i(t); u^i_t(t))$, $i=1,2$.
The theory of quasi-stable systems was started with \cite{cl-jdde} and has been developed in \cite{Chu-dqsds15,cl-mem,cl-book,cl-hcdte}.
The main outcome of this theory is the following assertion.
\begin{theorem}[Global attractor]\label{th7.9dim} 
Assume that the system $(H,S_t)$ is dissipative
 and asymptotically quasi-stable on a bounded forward invariant absorbing set $\cB$ in $H$.
Then  the  system $(H,S_t)$ possesses a compact
global attractor $\fA$ of finite
fractal dimension.
\end{theorem}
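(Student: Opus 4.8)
The plan is to split the claim into the existence of a compact global attractor and the finiteness of its fractal dimension, deriving each from one of the two inequalities in the definition of asymptotic quasi-stability. For the attractor: since the system is dissipative, by Theorem~\ref{th:main-attractor-a} it suffices to verify that $(H,S_t)$ is asymptotically smooth, and I would deduce this from \eqref{8.4.2mc}. As $b(t)\in L_1(\R_+)$ with $b(t)\to0$, fix $t_0>0$ with $q:=b(t_0)^{1/2}<1$. Writing $S_\cdot y_i=(u^i(\cdot);u^i_t(\cdot))$, inequality \eqref{8.4.2mc} at time $t_0$ gives, for all $y_1,y_2\in\cB$,
\[
\|S_{t_0}y_1-S_{t_0}y_2\|_H\le q\,\|y_1-y_2\|_H+\Phi(y_1,y_2),\qquad
\Phi(y_1,y_2):=c(t_0)^{1/2}\sup_{0\le s\le t_0}\mu_X\big(u^1(s)-u^2(s)\big).
\]
The function $\Phi$ is contractive on $\cB$: for any bounded sequence $\{y_n\}\subset\cB$, forward invariance of $\cB$ keeps $\{u^n(s):n,\ s\in[0,t_0]\}$ bounded in $X$ and, through the energy structure of the system, keeps $\{u^n\}$ equicontinuous as $Y$-valued maps on $[0,t_0]$; since $\mu_X$ is a compact seminorm on $X$ and $X$ embeds compactly into $Y$, a compactness argument in $C([0,t_0];X)$ produces a subsequence along which $\sup_{[0,t_0]}\mu_X(u^{n_k}(s)-u^{n_l}(s))\to0$, i.e.\ $\Phi(y_{n_k},y_{n_l})\to0$. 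By the standard contractive-function criterion (see \cite{cl-mem} and \cite{Chu-dqsds15}), $\cB$ --- and hence, by absorption, every bounded set --- is uniformly attracted by a compact set, so $(H,S_t)$ is asymptotically smooth and Theorem~\ref{th:main-attractor-a} yields the compact global attractor $\fA\subseteq\cB$.

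\textbf{Finite fractal dimension.} Since $\fA$ is invariant, bounded and contained in $\cB$, quasi-stability holds on $\fA$. I would pass to the discrete dynamical system generated by $V:=S_{t_*}\big|_{\fA}$ for a fixed $t_*$ with $b(t_*)<1$; invariance gives $V\fA=\fA$, and \eqref{8.4.2mc} becomes
\[
\|Vy_1-Vy_2\|_H^2\le b(t_*)\,\|y_1-y_2\|_H^2+c(t_*)\Big[\sup_{0\le s\le t_*}\mu_X\big(u^1(s)-u^2(s)\big)\Big]^2,\qquad y_1,y_2\in\fA .
\]
The solution map $y\mapsto u(\cdot)$ is Lipschitz from $\fA$ into $C([0,t_*];X)$ (using the bound $\|S_ty_1-S_ty_2\|_H\le a(t)^{1/2}\|y_1-y_2\|_H$, the uniform bounds on $\fA$, and the equation), so the last term is bounded by $c(t_*)\,[n_*(y_1,y_2)]^2$ with $n_*(y_1,y_2)=\sup_{[0,t_*]}\mu_X(u^1(s)-u^2(s))$ a compact pseudometric on $\fA$ (the compactness of $\mu_X$ on $X$ passes through the Lipschitz solution map). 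This is exactly the setting of the abstract finite-dimensionality lemma for quasi-stable discrete maps (\cite{cl-mem}, \cite{Chu-dqsds15}), whose conclusion is $\dim^H_f\fA<\infty$.

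\textbf{Where the work is.} The contractive-function verification and the appeal to Theorem~\ref{th:main-attractor-a} are routine. The genuine obstacle is the second step: recasting $\sup_{[0,t_*]}\mu_X(u^1(s)-u^2(s))$ as a quantity dominated by a compact seminorm \emph{on the phase space} $H$. This requires control of $u^1-u^2$ in the $X$-topology on $[0,t_*]$ --- the Lipschitz bound only gives it in $Y$ --- combined with the compactness of $\mu_X$ via an Aubin--Lions / equicontinuity argument that exploits the uniform bounds available on $\fA$. Once that compactness is in hand, $\dim^H_f\fA<\infty$ is immediate from the abstract lemma.
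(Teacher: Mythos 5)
The paper does not actually prove Theorem~\ref{th7.9dim}: it is quoted as the ``main outcome'' of the quasi-stability theory with references to \cite{cl-jdde,cl-mem,Chu-dqsds15,cl-book}, and your argument reconstructs exactly the proof given in those sources --- asymptotic smoothness from \eqref{8.4.2mc} via the contractive-function criterion, then finite fractal dimension via the abstract lemma for quasi-stable discrete maps. So the approach is the right one, but two steps are misstated and need repair (neither is fatal). First, there is no ``compactness argument in $C([0,t_0];X)$'': the trajectories $u^n$ are only bounded in $X$ uniformly in $s$ and $n$, and what Arzel\`a--Ascoli gives (from the compact embedding $X\subset Y$ together with the uniform $Y$-bound on $u^n_t$, which yields equicontinuity into $Y$) is precompactness in $C([0,t_0];Y)$. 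To conclude that $\sup_{[0,t_0]}\mu_X\big(u^{n_k}(s)-u^{n_l}(s)\big)\to0$ you must combine this with the Ehrling-type property of a compact seminorm, $\mu_X(x)\le \eps\|x\|_X+C_\eps\|x\|_Y$ for every $\eps>0$, which holds because $X$ is reflexive and compactly embedded in $Y$. (Relatedly, the contractive-function criterion requires the non-contractive remainder to be $\le\eps$ with $\eps$ arbitrary, so you should iterate the one-step inequality to replace $q$ by $q^n$ before invoking it.) Second, the ``genuine obstacle'' you identify in the dimension step is not an obstacle: the $H$-norm of $S_ty_1-S_ty_2$ is $\big(\|u^1(t)-u^2(t)\|_X^2+\|u^1_t(t)-u^2_t(t)\|_Y^2\big)^{1/2}$, so the first quasi-stability inequality already makes $y\mapsto u(\cdot)$ Lipschitz from $\fA$ into $C([0,t_*];X)$; control of $u^1-u^2$ in the $X$-topology comes for free, and the precompactness of the pseudometric $n_*$ then follows by the same Arzel\`a--Ascoli/Ehrling argument as above, not by any additional Aubin--Lions step.
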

Another consequence of quasi-stability
 is the following assertion 
which states some regularity of the attractor and provides additional bounds for  trajectories in it.

\begin{theorem}[Regularity]\label{th7.9reg}
Assume that the dynamical system $(H,S_t)$ possesses a compact global attractor
$\fA$ and is asymptotically quasi-stable on  $\fA$.
Moreover, we assume that \eqref{8.4.2mc} holds with the function
$c(t)$ possessing the property $c_\infty=\sup_{t\in\R_+}c(t)<\infty$.
 Then any full trajectory $\{ (u(t);u_t(t))\, :\, t\in\R\}$
that belongs to the global attractor enjoys
 the following regularity  properties
\[
u_t \in  L_\infty(\R; X)\cap C(\R; Y),  \quad u_{tt} \in  L_\infty(\R; Y).
\]
Moreover, there exists $R>0$ such that
\[
\| u_t(t)\|_X^{2}+ \| u_{tt}(t)\|_Y^{2}   \le  R^{2}, \quad t\in\R,
\]
where $R$ depends on the constant $c_\infty$, on the seminorm $\mu_X$,
and also on the embedding properties of $X$ into $Y$.
\end{theorem}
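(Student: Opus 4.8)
The plan is to prove the regularity of attractor trajectories by applying the quasi-stability estimate \eqref{8.4.2mc} to the $t$-difference quotients of a full trajectory and then running a short bootstrap. Fix a full trajectory $\{(u(t);u_t(t)):t\in\R\}\subset\fA$; since $\fA$ is bounded in $H=X\times Y$ there is $\rho>0$ with $\|u(t)\|_X,\|u_t(t)\|_Y\le\rho$ for all $t$. For $h\in(0,1]$ put $z^h(t)=u(t+h)-u(t)$ and $U^h(t)=(z^h(t);z^h_t(t))$, so $\|U^h(t)\|_H\le 2\sqrt2\,\rho$, and since $u\in C^1(\R;Y)$ the identity $z^h(t)=\int_t^{t+h}u_t(s)\,ds$ gives $\|z^h(t)\|_Y\le\rho\,h$.

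First I would apply \eqref{8.4.2mc} with $y_1=(u(\tau+h);u_t(\tau+h))$ and $y_2=(u(\tau);u_t(\tau))$ (both in the invariant set $\fA$), at a fixed time $t_*$ chosen, using $b(t)\to0$, so that $b(t_*)=:q<1$. Because $S_{t_*}$ shifts these trajectory points by $t_*$, the estimate becomes
\[
\|U^h(\tau+t_*)\|_H^2\le q\,\|U^h(\tau)\|_H^2+c_\infty\sup_{t\in\R}\bigl[\mu_X(z^h(t))\bigr]^2,\qquad\tau\in\R .
\]
Taking the supremum over $\tau\in\R$ of the (finite) quantity $\Phi(h):=\sup_{t\in\R}\|U^h(t)\|_H^2$ gives $\Phi(h)\le\frac{c_\infty}{1-q}\,M_h$, where $M_h:=\sup_{t\in\R}[\mu_X(z^h(t))]^2$; this is where finiteness of $c_\infty$ enters.

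The core of the argument is the estimate of $M_h$, and here I would use the defining property of a \emph{compact} seminorm together with the compact embedding $X\hookrightarrow Y$ and reflexivity of $X$ to get the interpolation inequality: for every $\eps>0$ there is $C_\eps$ with $\mu_X(x)\le\eps\|x\|_X+C_\eps\|x\|_Y$ for all $x\in X$. (Proof by contradiction: a sequence normalized in $X$ and violating this would have $\|x_k\|_Y\to0$, hence $x_k\rightharpoonup0$ weakly in $X$, hence $\mu_X(x_k)\to0$, contradicting $\mu_X(x_k)\ge\eps_0$.) Applying it to $z^h(t)$ yields $M_h\le 2\eps^2\Phi(h)+2C_\eps^2\rho^2h^2$. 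Substituting into $\Phi(h)\le\frac{c_\infty}{1-q}M_h$ and fixing $\eps$ so small that $\frac{2c_\infty\eps^2}{1-q}\le\frac12$, I can absorb the $\Phi(h)$-term on the right and obtain $\Phi(h)\le C_*h^2$ for all $h\in(0,1]$, with $C_*$ depending only on $c_\infty$, on $q$, on the seminorm $\mu_X$ (through $C_\eps$), on the embedding $X\hookrightarrow Y$, and on the attractor size $\rho$.

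Finally, $\Phi(h)\le C_*h^2$ says $\|h^{-1}z^h(t)\|_X\le\sqrt{C_*}$ and $\|h^{-1}z^h_t(t)\|_Y\le\sqrt{C_*}$ uniformly in $t$ and $h$. Sending $h\to0$ along a sequence and using reflexivity of $X$, a weak $X$-limit of $h^{-1}z^h(t)$ exists and must coincide with its strong $Y$-limit $u_t(t)$; hence $u_t(t)\in X$ with $\|u_t(t)\|_X\le\sqrt{C_*}$ for every $t$, i.e.\ $u_t\in L_\infty(\R;X)$, while $u_t\in C(\R;Y)$ is part of the standing structure. Likewise the uniform bound on $\|h^{-1}z^h_t(t)\|_Y$ makes $t\mapsto u_t(t)$ Lipschitz into $Y$, so $u_{tt}\in L_\infty(\R;Y)$ with the same bound, and one reads off $R^2\sim C_*$ with the stated dependence. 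I expect the main obstacle to be precisely the seminorm interpolation inequality: it is what genuinely uses \emph{compactness} of $\mu_X$ rather than mere continuity, and it must be quantitative (arbitrarily small coefficient in front of $\|x\|_X$) so that the bootstrap closes; one also has to keep track that every constant is independent of the base point $\tau$, which is exactly what the invariance of $\fA$ provides.
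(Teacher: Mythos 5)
Your proof is correct and is essentially the standard argument for this result: the paper states Theorem \ref{th7.9reg} without proof (it is quoted from \cite{Chu-dqsds15} and \cite{cl-mem}), and the proof given there is exactly your scheme — apply the quasi-stability inequality \eqref{8.4.2mc} to the difference quotients $u(\cdot+h)-u(\cdot)$ along a full trajectory, use the interpolation inequality $\mu_X(x)\le\eps\|x\|_X+C_\eps\|x\|_Y$ coming from compactness of the seminorm and of the embedding $X\hookrightarrow Y$ to close the bootstrap $\Phi(h)\le C_*h^2$, and pass to the limit $h\to0$ using reflexivity. All the key points (a priori finiteness of $\Phi(h)$ before absorbing, choice of $t_*$ with $b(t_*)<1$, identification of the weak $X$-limit with the strong $Y$-limit $u_t(t)$) are handled correctly.
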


In the study of synchronization phenomena we 
 deal with  stability of attractors \wrt interaction intensity. Variations of the intensity parameter  is treated as  a perturbation of the
dynamical system. To describe these variations at the abstract level  we consider
 a family of dynamical systems $(X,S_t^\lambda)$ with
the same  phase space $X$ and with evolution operators
$S_t^\lambda$ depending on a parameter $\lambda$ from a complete metric space
$\Lambda$. The following assertion is proved by  Kapitansky and
 Kostin \cite{kap-kos} (see also \cite{BV92} and \cite{Ha88} for related results).
\begin{theorem}[Upper semicontinuity]\label{t7.2.5} Assume that
a dynamical system  $(X,S^\lambda_t)$
in a complete metric space $X$ possesses  a   compact global attractor $\fA^\lambda$
for every $\lambda\in\Lambda$. Assume that the following conditions hold.
\begin{enumerate}
\item[(i)]
There exists a compact $K\subset X$ such that $\fA^\lambda\subset K$.
\item[(ii)]
If $\lambda_k\to\lambda_0$, $x_k\to x_0$ and $x_k\in \fA^{\lambda_k}$, then
\begin{equation}\label{7.2.6-}
S^{\lambda_k}_\tau x_k\to S^{\lambda_0}_\tau x_0 ~~~\mbox{for some $\tau>0$.}
\end{equation}
\end{enumerate}
Then the family $\{ \fA^\lambda\}$  of attractors  is upper semicontinuous at the
point $\lambda_0$; that is,
\[
 d_X\left\{ \fA^{\lambda}\, |\, \fA^{\lambda_0}\right\}\equiv
 \sup\left\{\dist_X (x, \fA^{\lambda_0})\; :\; x\in \fA^{\lambda} \right\}
\to 0
~~\mbox{as}~~ \lambda\to\lambda_0.
\]
Moreover, if \eqref{7.2.6-} holds for every $\tau>0$, then  the upper limit $\fA(\lambda_0, \Lambda)$ of the attractors
$\fA^{\lambda}$ at  $\lambda_0$ defined by the formula
\[
\fA(\lambda_0, \Lambda)=
\bigcap_{\delta>0}\overline{\bigcup\left\{ \fA^\lambda\; :\lambda\in\Lambda,\;
0<\dist (\lambda,\lambda_0)<\delta\right\}}
\]
is a nonempty compact strictly invariant set lying in the attractor 
$\fA^{\lambda_0}$ and
possessing the property
\begin{equation*}
 d_X\left\{ \fA^{\lambda}\, |\, \fA(\lambda_0, \Lambda)\right\}
\to 0~
~\mbox{as}~~ \lambda\to\lambda_0.
\end{equation*}
\end{theorem}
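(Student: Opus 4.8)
The statement is the standard upper-semicontinuity principle of Kapitansky and Kostin, and the plan is to argue by contradiction, reducing everything to the fact recalled in the Appendix that the global attractor $\fA^{\lambda_0}$ coincides with the set of all bounded full trajectories of $(X,S_t^{\lambda_0})$. First I would suppose the first conclusion fails: then there exist $\delta>0$, a sequence $\lambda_k\to\lambda_0$ and points $x_k\in\fA^{\lambda_k}$ with $\dist_X(x_k,\fA^{\lambda_0})\ge\delta$. By hypothesis (i) all $x_k$ lie in the compact set $K$, so after passing to a subsequence $x_k\to x_0$ in $X$; continuity of the distance function gives $\dist_X(x_0,\fA^{\lambda_0})\ge\delta$, so that $x_0\notin\fA^{\lambda_0}$. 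The whole task is then to derive $x_0\in\fA^{\lambda_0}$, which is the desired contradiction.

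To this end I would build a bounded full trajectory of the limiting system through $x_0$. Since each $\fA^{\lambda_k}$ is strictly invariant, there is a full trajectory $\{u_k(t):t\in\R\}\subset\fA^{\lambda_k}\subset K$ with $u_k(0)=x_k$. Fixing the value $\tau>0$ supplied by hypothesis (ii), I would apply a diagonal extraction over the countable set $\{n\tau:n\in\mathbb{Z}\}$: because $K$ is compact, one obtains a subsequence along which $u_k(n\tau)\to y_n\in K$ for every $n$, with $y_0=x_0$. The semigroup identity $u_k((n+1)\tau)=S_\tau^{\lambda_k}u_k(n\tau)$ together with (ii), applied with $u_k(n\tau)\to y_n$, then yields $y_{n+1}=S_\tau^{\lambda_0}y_n$, so $\{y_n\}_{n\in\mathbb{Z}}$ is a bounded full orbit of the single map $S_\tau^{\lambda_0}$. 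Interpolating by $u(t)=S_{t-n\tau}^{\lambda_0}y_n$ for $t\in[n\tau,(n+1)\tau)$ and using the continuity of $t\mapsto S_t^{\lambda_0}y$ together with $S_\tau^{\lambda_0}y_n=y_{n+1}$ produces a full trajectory of $(X,S_t^{\lambda_0})$ with $u(0)=x_0$, contained in $K$ and hence bounded. By the characterization of the attractor as the set of all bounded full trajectories this gives $x_0\in\fA^{\lambda_0}$, the required contradiction, and the first assertion follows.

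For the second assertion, assuming now that (ii) holds for every $\tau>0$, I would analyze the upper limit $\fA(\lambda_0,\Lambda)$. Its compactness is immediate since it is a closed subset of $K$, and the contradiction argument just used shows directly that every point of $\fA(\lambda_0,\Lambda)$, being a limit $x_k\to x_0$ with $x_k\in\fA^{\lambda_k}$ and $\lambda_k\to\lambda_0$, lies in $\fA^{\lambda_0}$; this gives the inclusion $\fA(\lambda_0,\Lambda)\subseteq\fA^{\lambda_0}$ and, in particular, boundedness, while nonemptiness follows by selecting one point from each $\fA^{\lambda_k}$ and extracting a limit in $K$. The only genuinely new point is strict invariance $S_t^{\lambda_0}\fA(\lambda_0,\Lambda)=\fA(\lambda_0,\Lambda)$ for all $t\ge0$: the inclusion $S_t^{\lambda_0}\fA(\lambda_0,\Lambda)\subseteq\fA(\lambda_0,\Lambda)$ comes from applying (ii), now with $\tau=t$ arbitrary, to approximating sequences, whereas the reverse inclusion requires producing, for each $x_0\in\fA(\lambda_0,\Lambda)$, a preimage again in $\fA(\lambda_0,\Lambda)$, which I would obtain by pulling back along the full trajectories $u_k$ inside the invariant sets $\fA^{\lambda_k}$ and passing to the limit exactly as above. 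Finally the convergence $d_X\{\fA^{\lambda}\,|\,\fA(\lambda_0,\Lambda)\}\to0$ is a soft compactness consequence of the definition of the upper limit together with hypothesis (i).

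I expect the main obstacle to be the passage from discrete to continuous time in the construction of the limiting full trajectory: hypothesis (ii) only furnishes convergence of the evolution operators at a single time $\tau$, so the interpolation step and the verification that the interpolated curve is genuinely a full trajectory of $S_t^{\lambda_0}$, compatible at the grid points $n\tau$, must be carried out using only the semigroup property and the continuity in $t$ of the unperturbed flow. The strict-invariance part of the second assertion is the other delicate point, since establishing the ``onto'' direction requires constructing limit preimages rather than merely forward images.
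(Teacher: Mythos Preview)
The paper does not prove this theorem. It is stated in the Appendix as a known result, attributed to Kapitansky and Kostin \cite{kap-kos} (with pointers to \cite{BV92,Ha88} for related results), and no argument is given. So there is no proof in the paper to compare your proposal against.

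That said, your contradiction/diagonal-extraction scheme is the standard one and is essentially correct. One small gap is worth flagging: you assert that the interpolated continuous trajectory $u(t)=S^{\lambda_0}_{t-n\tau}y_n$ is ``contained in $K$ and hence bounded'', but only the grid values $y_n$ are known to lie in $K$; under the hypotheses of the Appendix (separate continuity in $t$ and in $x$) there is no a priori reason the interpolants remain in $K$ or are even uniformly bounded. The clean fix is to bypass interpolation altogether: the discrete orbit $\{y_n\}_{n\in\mathbb Z}\subset K$ is a bounded full orbit of the single map $S^{\lambda_0}_\tau$, and since $\fA^{\lambda_0}$ attracts $K$ under iterates of $S^{\lambda_0}_\tau$ one gets $y_n\in\fA^{\lambda_0}$ for every $n$ by writing $y_n=(S^{\lambda_0}_\tau)^N y_{n-N}$ and letting $N\to\infty$. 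This already yields $x_0=y_0\in\fA^{\lambda_0}$ and the desired contradiction; it also validates your interpolation claim a posteriori, since then $u(t)\in S^{\lambda_0}_{t-n\tau}\fA^{\lambda_0}=\fA^{\lambda_0}\subset K$ by invariance. Your sketch for the second assertion (compactness, inclusion in $\fA^{\lambda_0}$, strict invariance via forward images and pulled-back preimages, and the final Hausdorff convergence from the definition of the upper limit) is sound.
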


\end{document}